\newtheorem{condition**}{A*}
\newtheorem{condition***}{C*}
\newtheorem{condition*}{C}
\newtheorem{proposition}{Proposition}[section]
\newtheorem{definition}{Definition}[section]
\newtheorem{theorem}{Theorem}[section]
\newtheorem{lemma}{Lemma}[section]
\newtheorem{remark}{Remark}[section]
\def\FF{\mathbb F}\def\RR{\mathbb R}
\def\cF{\mathcal F}\def\al{\alpha}
\newenvironment{keywords}{{\bf Key words: }}{}
\newenvironment{AMS}{{\bf AMS subject classification: }}{}
\begin{document}

\title{Social Optima of Linear Forward-Backward Stochastic System\thanks{This work was firstly completed in May 2021, and we arrive the current version after many revisions. The first author was supported in part by the National Key R\&D Program of China (No. 2022YFA1006103), the NSFC for Distinguished Young Scholars (No. 61925306), the NSFC (No. 11831010), and the NSF of Shandong Province (Nos. ZR2020ZD24 and ZR2019ZD42).
 The second author was supported in part by the National Key R\&D Program of China (Nos. 2022YFA1006104, 2023YFA1009203), the Taishan Scholars Young Program of Shandong (No. TSQN202211032) and the Young Scholars Program of Shandong University.
 The third author was supported in part by the National Key R\&D Program of China (No. 2022YFA1006102), and the NSFC (No. 11831010).}}

\author{Guangchen Wang\thanks{School of Control Science and Engineering, Shandong University, Jinan, Shandong 250061, China
  (wguangchen@sdu.edu.cn).}
\qquad Shujun Wang\thanks{Corresponding author. School of Management, Shandong University, Jinan, Shandong 250100, China
  (wangshujun@sdu.edu.cn).}
\qquad Jie Xiong\thanks{Department of Mathematics and SUSTech International Center for Mathematics, Southern University of Science and Technology, Shenzhen, Guangdong 518055, China   (xiongj@sustech.edu.cn).}}

\date{}

\maketitle

\begin{abstract}
A linear quadratic (LQ) stochastic optimization system involving large population, which is driven by \emph{forward-backward stochastic differential equation (FBSDE)}, is investigated in this paper. Agents cooperate with each other to minimize the so-called social objective, which is rather different from mean field (MF) game. Employing forward-backward person-by-person optimality principle, we derive an auxiliary LQ control problem by decentralized information. A decentralized strategy is obtained by virtue of an MF-type forward-backward stochastic differential equation consistency condition. Applying \emph{Riccati equation} decoupling method, we solve the consistency condition system. We also verify the asymptotic social optimality in this framework.
\end{abstract}

\begin{keywords}  {Forward-backward stochastic differential equation, Riccati equation, social optima, mean field game.}\end{keywords}

\begin{AMS}
91A07, 91A15, 93E03, 93E20
\end{AMS}

\section{Introduction}
\setcounter{equation}{0}
\renewcommand{\theequation}{\thesection.\arabic{equation}}

In this section,  we first present some notation and introduce the main motivations of this work. Then a \emph{forward-backward stochastic LQ MF social optima problem} is posed, which will be investigated in this paper.

\subsection{Notation}

For $T>0$, let $(\Omega,\mathcal F,\{\mathcal F_t\}_{0\leq t\leq T},\mathbb P)$ be a complete probability space, on which a $N$-dimensional standard Brownian motion $\{W_i(t),1\leq i\leq N\}_{0\leq t\leq T}$ is defined. $W(t):=(W_1(t),\ldots,W_N(t))^\top$. Let $\FF=\{\mathcal F_t,\;t\in[0,T]\}$ denote the filtration generated by $\{W_i(s),\ \xi_i,\ 1\leq i\leq N\}_{0\leq s\leq t}$ and augmented by $\mathcal N_{\mathbb P}$ (which is the class of all $\mathbb P$-null sets of $\mathcal F$). Let $\mathcal F_t^i$ denote the augmentation of $\sigma\{W_i(s),\ \xi_i,\ 0\leq s\leq t\}$ by $\mathcal N_{\mathbb P}$, $1\leq i\leq N$. Here, $\xi_i,\ 1\leq i\leq N$ are initial values of states which will be defined later.

In this paper, the Euclidean inner product is denoted by $\langle\cdot,\cdot\rangle$. $A^\top$ stands for the transpose of a matrix (or vector) $A$. Let $\mathbb{S}^n$ denote the set of symmetric $n\times n$ matrices. If $A\in \mathbb{S}^n$ is positive (semi) definite, we write $A> (\geq)\ 0$. We write $A\gg 0$, if $M - \epsilon I \geq 0$ for some $\epsilon>0$. We introduce the following spaces:
\begin{itemize}
  \item $L_{\mathcal F}^2(\Omega;\mathbb R^n):=\Big\{\zeta:\Omega\rightarrow\mathbb R^n|\zeta\text{ is }\mathcal F\text{-measurable and }\mathbb E|\zeta|^2<\infty\Big\}$;
  \item $L_{\mathcal F}^\infty(\Omega;\mathbb R^{n}):=\Big\{\zeta:\Omega\rightarrow\mathbb R^{n}|\zeta\text{ is }\mathcal F\text{-measurable and  uniformly bounded}\Big\}$;
    \item $L^2_{\FF}(\Omega;C([0,T];\mathbb R^n)):=\Big\{\zeta(\cdot):\Omega\times[0,T]\rightarrow\mathbb R^n|\zeta(\cdot)$ \text{is continuous and}\ $\mathcal F_t$\text{-adapted}\\ \text{satisfying} $\mathbb E \Big[\sup\limits_{s\in[0,T]}|\zeta(s)|^2\Big]<\infty\Big\}$;
  \item $L_{\FF}^2(0,T;\mathbb R^n):=\Big\{\zeta(\cdot):\Omega\times[0,T]\rightarrow\mathbb R^n|\zeta(\cdot)\text{ is an }\mathcal F_t\text{-progressively measurable}$\\ $\text{process satisfying\ }      \mathbb E\int_0^T|\zeta(s)|^2ds<\infty\Big\}$;
  \item $L^2(0,T;\mathbb R^n):=\Big\{\zeta(\cdot):[0,T]\rightarrow\mathbb R^n|\int_0^T|\zeta(s)|^2ds<\infty\Big\}$;
  \item $L^\infty(0,T;\mathbb R^{n\times m}):=\Big\{\zeta(\cdot):[0,T]\rightarrow\mathbb R^{n\times m}|\zeta(\cdot)\text{ is uniformly bounded}\Big\}$.
   \end{itemize}

\subsection{Motivation}\label{sec1.2}
Consider a controlled large population (also called multi-agent) system in which the dynamic of the agent $\mathcal A_i$ is modelled by a stochastic differential equation (SDE)
\begin{equation}\left\{\label{eq00}\begin{aligned}
dX_i(t)=\ &b\Big(t,X_i(t),u_i(t),X^{(N)}(t)\Big)dt+\sigma\Big(t,X_i(t),u_i(t),X^{(N)}(t)\Big)dW_i(t),\\
X_i(0)=\ &x_{i0},
\end{aligned}\right.\end{equation}
with cost functional
\begin{equation}\label{eq01}\begin{aligned}
\mathcal J_i(x_{i0},u(\cdot))=\ &\mathbb E\left\{\int_0^TL(t,X_i(t),u_i(t),X^{(N)}(t))dt+\Phi(x_{i0},X_i(T))\right\},
\end{aligned}\end{equation}
where $X^{(N)}(\cdot)=\frac{1}{N}\sum_{i=1}^NX_i(\cdot)$ denotes the state-average of agents; $u(\cdot)=(u_1(\cdot)$, $\cdots,u_N(\cdot))$, $u_i\in \mathcal U_i:=\Big\{u_i(\cdot)|u_i(\cdot)\in L^2_{\FF}(0,T;\mathbb R^d)\Big\}$, $x_{i0}\in L_{\mathcal F_0}^2(\Omega;\mathbb R^n)$, $i=1,\cdots,N$.
Define
$$
\mathcal J_{soc}^{(N)}(u(\cdot))=\sum_{i=1}^N\mathcal J_i(x_{i0},u(\cdot))
$$
as the aggregated functional of $N$ agents. Then we can pose a classical MF optimal control problem.

\textbf{Problem 0.} Find a strategy $\bar u=(\bar u_1,\cdots,\bar u_N)$ where $\bar u_i(\cdot)\in \mathcal U_i$, $1\leq i\leq N$ such that
\begin{equation}\label{eq03}\mathcal J_{soc}^{(N)}(\bar u(\cdot))=\inf_{u_{i}\in\mathcal U_{i},1\leq i\leq N}\mathcal J_{soc}^{(N)}(u_1(\cdot),\cdots,u_i(\cdot),\cdots,u_{N}(\cdot)).
\end{equation}

In recent years, the large population system has been extensively discussed due to its wide applications in many areas, such as social science, engineering, economics, etc. In this structure, we should point out that each individual agent seems to be negligible, however we cannot ignore the effects of the statistical behaviors. Readers may refer to \cite{BSYS2016}, \cite{BLP2014}, \cite{CD2013}, \cite{HHN2018}, \cite{LL2007}, \cite{NNY2020}, \cite{NC2013}, \cite{SWW2021} and the references therein for MF game study.
Contrast to the aforementioned works where the agents are competitive, cooperative team optimization problem has attracted a lot of attentions in last ten years, which is the so-called \emph{social optima problem}. In \cite{HCM2012} authors investigated social optima in mean field LQG control and provided an asymptotic team-optimal solution. \cite{AM2015} focused on team-optimal control with finite population and partial information. \cite{FHH2023} investigated the homogeneity, heterogeneity and quasi-exchangeability of forward mean-field team. \cite{WZ2017} studied a mean field social optimal problem in which a Markov jump parameter appears as a common source of randomness for all agents. For more literature, one can refer to \cite{SNM18} for dynamic collective choice by finding a social optimum, \cite{NM2018} for social optima in economic models subject to idiosyncratic shocks, \cite{SSM18} for reinforcement learning algorithms for mean field teams, \cite{HN2019} for major and minor study of social optima problem, \cite{SY19} for stochastic dynamic teams and their mean field limit, \cite{WZZ2020} for uniform stabilization of mean field linear quadratic control, \cite{HWY2021} for volatility uncertainty problem, etc. For more researches and applications readers can be referred to \cite{BDT2019}, \cite{CBM2016}, \cite{PRT2015}, \cite{WS2000} and the references therein.

It is well known that objective expectation $\mathbb E$ partially represents people's preference. Alternatively, we apply the so-called \emph{generalized expectation} which seems to be subjective in some sense. Based on the theory of backward stochastic differential equation (BSDE), \cite{Peng1997} introduced the so-called $g$-expectation (nonlinear expectation), which is denoted by $\mathcal{E}_g$. Replacing $\mathbb E$ by $\mathcal{E}_g$ in \eqref{eq01}, we obtain an extension of \textbf{Problem 0}, which may be considered as nonlinear preferences. In details, let $g:[0,T]\times \RR^{1+N}\to\RR$ be a given map,
$\eta_i\in L_{\mathcal F_T}^2(\Omega;\mathbb R)$, and $Y_i(\cdot)$ satisfy the BSDE
\begin{equation}\label{eq04}\begin{aligned}
dY_i(t)=\ &g(t,Y_i(t),Z_{i\cdot}(t))dt+Z_{i\cdot}(t)dW(t),\quad Y_i(T)=\ \eta_i,\quad 1\leq i\leq N.
\end{aligned}\end{equation}
Define $\mathcal{E}_g[\eta_i]:=Y_i(0;\eta_i),\  \eta_i\in L_{\mathcal F_T}^2(\Omega;\mathbb R),\  1\leq i\leq N.$
$\mathcal{E}_g[\eta_i]$ is called the $g$-expectation of $\eta_i$, $1\leq i\leq N$, if $g(t,y,0,\cdots,0)=0$ holds for $(t,y)\in [0,T]\times \mathbb R,\ a.s.$ (\cite{Peng1997, Yong2008}). In fact, $g$-expectation is related to \emph{stochastic differential utility} introduced in \cite{DE92}. According to \cite{DE92}, we regard $Y_i(\cdot)$ as the \emph{stochastic differential utility process} of $\eta_i$, $1\leq i\leq N$ with the so-called aggregator $g(\cdot)$. In this case, the operator $\mathcal{E}_g:L_{\mathcal F_T}^2(\Omega)\rightarrow\mathbb R$ posseses most properties of $\mathbb E$ except the linearity. By virtue of the above theory, we replace \eqref{eq01} by
\begin{equation}\label{eq05}\begin{aligned}
\mathcal J_g^i(x_{i0},u(\cdot))=\ &\mathcal E_g\Bigg\{\int_0^TL(t,X_i(t),u_i(t),X^{(N)}(t))dt+\Phi(x_{i0},X_i(T))\Bigg\}=Y_i(0),
\end{aligned}\end{equation}
with $(Y_i(t),Z_{i\cdot}(t))$ being the unique adapted solution of BSDE
\begin{equation}\left\{\label{eq06}\begin{aligned}
dY_i(t)=\ &g\big(t,Y_i(t),Z_{i\cdot}(t)\big)dt+Z_{i\cdot}(t)dW(t),\\
Y_i(T)=\ &\int_0^TL(t,X_i(t),u_i(t),X^{(N)}(t))dt+\Phi(x_{i0},X_i(T)),\quad 1\leq i\leq N.
\end{aligned}\right.\end{equation}
If we define $X^*_i(t):=\int_0^t L(s,X_i(s),u_i(s),X^{(N)}(s))ds$, then \textbf{Problem 0} becomes the one to minimize $$\mathcal J_{soc}^{(N)*}(u(\cdot))=\sum_{i=1}^N\mathcal J_g^i(x_{i0},u(\cdot)),$$ where
$$\mathcal J_g^i(x_{i0},u(\cdot))=Y_i(0)=\mathcal E_g\Big\{X^*_i(T)+\Phi(x_{i0},X_i(T))\Big\}$$
subject to
\begin{equation}\left\{\label{eq07}\begin{aligned}
&d\left(
   \begin{smallmatrix}
     X^*_i(t) \\
     X_i(t) \\
   \end{smallmatrix}
 \right)=\left(
   \begin{smallmatrix}
     L(t,X_i(t),u_i(t),X^{(N)}(t))\\
     b(t,X_i(t),u_i(t),X^{(N)}(t)) \\
   \end{smallmatrix}
 \right)dt+\left(
   \begin{smallmatrix}
     0\\
    \sigma(t,X_i(t),u_i(t),X^{(N)}(t)) \\
   \end{smallmatrix}
 \right)dW_i(t),\\
&dY_i(t)=g(t,Y_i(t),Z_{i\cdot}(t))dt+Z_{i\cdot}(t)dW(t),\\
&\left(
   \begin{smallmatrix}
     X^*_i(0) \\
     X_i(0) \\
   \end{smallmatrix}
 \right)=\left(
   \begin{smallmatrix}
     0 \\
     x_{i0} \\
   \end{smallmatrix}
 \right),\quad Y_i(T)=X^*_i(T)+\Phi(x_{i0},X_i(T)),\quad 1\leq i\leq N.
\end{aligned}\right.\end{equation}
Indeed, \eqref{eq07} is a controlled FBSDE with large population structure.

The next motivation  involves a recursive utility problem. Assume that a market contains $N$ participants. The dynamic $x_i(\cdot)$ of the  individual underlying state (asset) for $i$th participant is given by
\begin{equation*}\left\{\begin{aligned}
dX_i(t)&=\left(A X_i(t)+B \pi_i(t)+FX^{(N)}(t)\right)dt+D \pi_i(t) dW_i(t),\\
X_i(0)&= x_{i0}>0,\quad 1\leq i\leq N.
\end{aligned}\right.
\end{equation*}
Here, $X^{(N)}(\cdot)=\frac{1}{N}\sum_{i=1}^NX_i(\cdot)$ is the average asset; $A,B,F,D,x_{i0}$ are constants; $(W_i(\cdot),\ 1\leq i\leq N)$ is a $N$-dimensional standard Brownian motion; $\pi_i(\cdot)\in \mathbb{R}$ is regarded as some economic indicator such as the investment strategy of the $i^{th}$ participant, $1\leq i\leq N$.

Let $c_i(\cdot),$ $1\leq i\leq N$ be a continuous consumption rate process. Assume that a terminal reward $\Phi X_i(T)$ is involved. By \cite{EPQ1997}, the recursive utility operates as a solution of a BSDE, which is denoted by $Y_i^{c_i,\pi_i}(\cdot),\ 1\leq i\leq N$. Suppose that $Y_i^{c_i,\pi_i}(\cdot),\ 1\leq i\leq N$ satisfies
\begin{equation*}\left\{\begin{aligned}
-dY_i(t)&=\left(H Y_i(t)+K c_i(t)+MX^{(N)}(t)\right)dt-Z_{i\cdot}(t) dW(t),\\
Y_i(T)&=\Phi X_i(T).
\end{aligned}\right.
\end{equation*}
Define $\mathcal F_t:=\sigma \{W_i(s); 0\leq s \leq t,\ 1\leq i \leq N\}$. To find an $\mathcal F_t$-adapted process $(\bar{c}_i(\cdot),\bar{\pi}_i(\cdot))$ such that $$ J_{soc}^{(N)}(\bar{c}_i,\bar{\pi}_i):=\sum_{i=1}^NY_i^{\bar{c}_i,\bar{\pi}_i}(0)=\max_{(c_i,\pi_i)}\Bigg(\sum_{i=1}^NY_i^{c_i,\pi_i}(0)\Bigg),$$
is identified as a recursive optimal control problem.

It should be noticed that the above models are illustrated by FBSDE,
which has been extensively discussed in literature. Readers are referred to \cite{CVM}, \cite{CWZ}, \cite{MY1999}, \cite{PengWu99}, \cite{WWX2013}, \cite{WWX2015}, \cite{Yong2008}, \cite{YZ1999} and the references therein for backgrounds and applications of FBSDE. For backward LQ problems, one may refer to \cite{HWW2016}, \cite{LSX17}, \cite{LZ2001}, etc.

\subsection{Problem formulation}\label{formulation}

Motivated by the above problems, with consideration to obtain some explicit results, in this paper we study an LQ large population system in which $K$ types of heterogeneous agents $\{\mathcal A_i:1\leq i\leq N\}$ are involved, where the dynamics of the agents satisfy a class of linear FBSDEs with MF coupling:
that is, for $1\leq i\leq N,$
\begin{equation}\left\{\label{eq1}\begin{aligned}
dX_i(t)=\ &\Big[A_{\theta_i}(t)X_i(t)+B(t)u_i(t)+F(t)X^{(N)}(t)\Big]dt+\Big[D(t)u_i(t)+\sigma_i(t)\Big]dW_i(t),\\
X_i(0)=\ &\xi_i,
\end{aligned}\right.\end{equation} and
\begin{equation}\left\{\label{eq2}\begin{aligned}
dY_i(t)=\ &-\Big[H_{\theta_i}(t)Y_i(t)+K(t)u_i(t)+L(t)X_i(t)+M(t)X^{(N)}(t)\Big]dt+Z_{i\cdot}(t)dW(t),\\
Y_i(T)=\ &\Phi X_i(T)+\eta_i,
\end{aligned}\right.\end{equation}
where $X^{(N)}(\cdot)=\frac{1}{N}\sum_{i=1}^NX_i(\cdot)$ stands for the forward state-average of the agents. In this system, ``heterogeneous" means that the agents in different types are not identical statistically. In the MF social optima problem of this paper, for $1\leq i\leq N$, $(Y_i(\cdot),Z_{ij}(\cdot),1\leq j\leq N)\in L^2_{\FF}(\Omega;C([0,T];\mathbb R^m)) \times L^{2}_{\FF}(0, T; (\mathbb{R}^{m})^N)$ is called the \emph{solution} of BSDE \eqref{eq2}. We should point out that $(Z_{ij}(\cdot),1\leq j\leq N)$ is a part of the \emph{solution} introduced to make $Y_i(\cdot)$ satisfy the adaptation requirement. The coefficients $(A_{\theta_i}(\cdot),B(\cdot),F(\cdot),D(\cdot),$ $H_{\theta_i}(\cdot),K(\cdot),L(\cdot),M(\cdot),\sigma_i(\cdot)),\ 1\leq i\leq N$ depend on time variable $t$, which  is often suppressed if no confusion is caused. $\Phi$ is an $m\times n$ matrix; $\xi_i$ and $\eta_i,\ 1\leq i\leq N$ are random variables. $\theta_i$ is a number, standing for a dynamic parameter relevant to $\mathcal A_i,\ 1\leq i\leq N$ which is used to illustrate the heterogeneous feature. For notational simplicity, here we assume that only $A(\cdot)$ and $H(\cdot)$ depend on $\theta_i,\ 1\leq i\leq N$. If other parameters also depend on $\theta_i,\ 1\leq i\leq N$, corresponding analysis is similar, and thus we will not give the details. We also assume that $\theta_i,\ 1\leq i\leq N$ take values in the finite set $\Theta$ defined as $\Theta:=\{1,2,\cdots,K\}$. $\mathcal A_i$ is called a \emph{type-$k$ agent} if $\theta_i=k\in\Theta,\ 1\leq i\leq N$. 
For $1 \leq k \leq K$ and a given $N$, define $\mathcal{I}_k:=\{i|\theta_i=k, 1 \leq i \leq  N\}, \  N_k:=|\mathcal{I}_k|,$ where $|\mathcal{I}_k|$ is the cardinality of the index set $\mathcal{I}_k$. For $1\leq k\leq K$, we define $\pi_k^{(N)}:=\frac{N_k}{N}$. Then $\pi^{(N)}=(\pi_1^{(N)}, \cdots, \pi_K^{(N)})$ is a probability vector representing the empirical distribution of $\theta_1, \cdots, \theta_N.$ Introduce the following assumption:
\begin{description}
  \item[(A1)] There exists a probability mass vector $\pi=(\pi_1, \cdots, \pi_K)$ such that $\displaystyle{\lim_{N\rightarrow\infty}}\pi^{(N)}=\pi$, $\displaystyle{\min_{1 \leq k \leq K}}\pi_{k}>0.$
  \item[(A2)] For $1\leq i\leq N$, $\xi_i\in L_{\mathcal F_0^i}^\infty(\Omega;\mathbb R^{n})$, $\eta_i\in L_{\mathcal F_T^i}^\infty(\Omega;\mathbb R^{m})$. $\xi_i$ and $\xi_j$ (\emph{resp.} $\eta_i$ and $\eta_j$)
   are identically distributed if $\theta_i=\theta_j=k$, and this type-$k$ variable is typically denoted by $\xi^{(k)}$ (\emph{resp.} $\eta^{(k)}$) when only their
   distribution is concerned. Here $\cF^i_t$ is the filtration gererated by the Brownian motion $W_i$.
   \item[(A3)] $A_{\theta_i}(\cdot),F(\cdot)\in L^\infty(0,T;\mathbb R^{n\times n})$, $B(\cdot),D(\cdot)\in L^\infty(0,T;\mathbb R^{n\times d})$, $H_{\theta_i}(\cdot)\in L^\infty(0,T;\mathbb R^{m\times m})$,\\ $L(\cdot),M(\cdot)\in L^\infty(0,T;\mathbb R^{m\times n})$, $K(\cdot)\in L^\infty(0,T;\mathbb R^{m\times d})$, $\sigma_i(\cdot)\in L^\infty(0,T;\mathbb R^{n})$, $1\leq i\leq N$.
  \end{description}
For $1\leq i\leq N$, the centralized admissible strategy set for $\mathcal A_i$ is defined by
$\mathcal U_i^c=\Big\{u_i(\cdot)|u_i(\cdot)\in L^2_{\FF}(0,T;\mathbb R^d)\Big\}.$
Correspondingly, the decentralized one for $\mathcal A_i$ is given by
$\mathcal U_i^d=\Big\{u_i(\cdot)|u_i(\cdot)\in L^2_{\FF^i}(0,T;\mathbb R^d)\Big\}$,\ $1\leq i\leq N.$
Actually, $\mathcal U_i^d$ is a subset of $\mathcal U_i^c,\ 1\leq i\leq N$.
It follows from (A1)-(A3) that \eqref{eq1}-\eqref{eq2} admits a unique solution for all $u_i \in \mathcal{U}_i^c,\ 1\leq i\leq N.$ In fact, we can rewrite \eqref{eq1}-\eqref{eq2} as a high-dimensional FBSDE and derive the wellposedness by the classical theory of FBSDE.  

Denote by $u=(u_1,\cdots,u_N)$, $u_{-i}=(u_1,\cdots,$ $u_{i-1},u_{i+1},\cdots,u_N)$, $1\leq i\leq N$. The cost functional of $\mathcal A_i$, $1\leq i\leq N$ is
\begin{equation}\label{eq3}\begin{aligned}
\mathcal J_i(u_i(\cdot),u_{-i}(\cdot))=\ &\frac{1}{2}\mathbb E\Bigg\{\int_0^T\Big[\left\langle Q(t)\left(X_i(t)-S(t) X^{(N)}(t)\right),X_i(t)-S (t) X^{(N)}(t)\right\rangle\\
&\qquad\qquad+\left\langle R_{\theta_i}(t)u_i(t),u_i(t)\right\rangle\Big]dt+\left\langle \Gamma Y_i(0),Y_i(0)\right\rangle\Bigg\}.
\end{aligned}\end{equation}
The aggregated team functional of $N$ agents is
\begin{equation}\label{eq4}
\mathcal J_{soc}^{(N)}(u(\cdot))=\sum_{i=1}^N\mathcal J_i(u_i(\cdot),u_{-i}(\cdot)).
\end{equation}
We impose an assumption on the coefficients of \eqref{eq3}.
\begin{description}
\item[(A4)] $Q(\cdot)\in L^\infty(0,T;\mathbb S^n)$, $Q(\cdot)\geq0$, $S(\cdot)\in L^\infty(0,T;\mathbb R^{n\times n})$, $R_{\theta_i}(\cdot)\in L^\infty(0,T;\mathbb S^d)$,\\ $R_{\theta_i}(\cdot)\gg 0$, $\Gamma \in \mathbb S^{m}$, $\Gamma\geq0$, $1\leq i\leq N$.
\end{description}
Corresponding to \eqref{eq1}-\eqref{eq4}, we will propose a \emph{forward-backward stochastic LQ MF social optima problem} below. It should be noticed that the expressions, such as ``social optima", ``cooperative team optimization", ``optimal team problem", etc, illustrate similar meanings of cooperative optimization problem. For the sake of uniformity, hereafter we adopt ``social optima" to express relevant meaning.  \\

\textbf{Problem 1.} Find a strategy set $\bar u=(\bar u_1,\cdots,\bar u_N)$ where $\bar u_i(\cdot)\in \mathcal U_i^c$, $1\leq i\leq N$ such that
\begin{equation}\label{eq5}
\mathcal J_{soc}^{(N)}(\bar u(\cdot))=\inf_{u_{i}\in\mathcal U_{i}^c,1\leq i\leq N}\mathcal J_{soc}^{(N)}(u_1(\cdot),\cdots,u_i(\cdot),\cdots,u_{N}(\cdot)).
\end{equation}
If $\Gamma=0$ (\emph{resp.} $L(\cdot)\equiv0,\ M(\cdot)\equiv0,\ \Phi=0,\ Q(\cdot)\equiv0$), it degenerates to \emph{(forward) stochastic LQ MF social optima problem} (\emph{resp.} \emph{backward stochastic LQ optimal control problem}).

\begin{definition}
A strategy $\widetilde u_i(\cdot)\in\mathcal U_i^d$, $1\leq i\leq N$ is an $\varepsilon$-social decentralized optimal strategy if there exists $\varepsilon=\varepsilon(N)>0$,
$\lim_{N\rightarrow\infty}\varepsilon(N)=0$ such that
$$\frac{1}{N}\Big(\mathcal J_{soc}^{(N)}
(\widetilde u(\cdot))-\inf_{u_{i}(\cdot)\in\mathcal U_{i}^c,1\leq i\leq N}\mathcal J_{soc}^{(N)}(u(\cdot))\Big)\leq\varepsilon.$$
\end{definition}
\begin{remark}
Notice that $Y_i(\cdot)$, $1\leq i\leq N$ is $\FF$-adapted because $X^{(N)}(\cdot)$ is involved in the dynamic. Therefore in \eqref{eq2} $Z_{ij}(\cdot)$ appears to represent the information of $\mathcal A_i$ associated with $W_j(\cdot),1\leq j\leq N$. It refers to the characteristic of backward (forward-backward) stochastic optimal control problem.
\end{remark}

\begin{remark}
In \eqref{eq2}-\eqref{eq3}, for $1\leq i\leq N$, $Z_{ij}(\cdot),1\leq j\leq N$ do not appear in the generator of the backward dynamic and the cost functional. It is because if $Z_{i\cdot}(\cdot)$ do, we need to make the error estimation between $\sum_{i=1}^N Z_{i\cdot}(\cdot)$ and some related quantity as those in Proposition \ref{prop0914} below. However, this seems to be an impossible task based on existing BSDE theory. Similarly, it is worthy pointing out that due to the difficulties of error estimations of BSDE, $X_i(\cdot)$ does not enter into the diffusion term in \eqref{eq1}. As a future work, we hope to overcome this difficulty with the help of some new technique. Besides, if \eqref{eq3} contains the linear term of $Y_i(0)$, similar analysis can be employed, and for simplicity of writing, here we just include the quadratic term.
\end{remark}
Now we briefly present the route of study of \textbf{Problem 1}:
\begin{itemize}
 \item Firstly, we focus on solving a fully-coupled FBSDE system (so-called consistency condition system) by Riccati equation analysis. 
  \item Based on forward-backward person-by-person optimality principle and variational synthesization technique, we obtain an auxiliary LQ control problem. Stochastic maximum principle (cf. \cite{Peng93}) is applied to solve it.
 \item By virtue of standard estimations of FBSDE, we verify that the decentralized strategy is  asymptotically optimal for centralized strategy.
\end{itemize}

The rest of the paper is organized as follows. The consistency condition system and its solvability are established in Section 2. We apply forward-backward person-by-person optimality principle to derive an auxiliary LQ control problem of each agent in Section 3. In Section 4, the asymptotic optimality of decentralized strategy is obtained. Section 5 concludes this paper.

\section{Consistency condition system} \label{cc33}
\setcounter{equation}{0}
\renewcommand{\theequation}{\thesection.\arabic{equation}}

In this section, we do some preparatory work. We present the consistency condition system and its wellposedness, based on which some quantities related to \eqref{eq17} and \eqref{Hamil} (see below in Section 3) will be determined, and furthermore the decentralized strategy will be derived. For the sake of presentation, we let $n=m$ here. There is no essential difference if $n\neq m$.

Consider the following stochastic system: for $1\leq k\leq K$,
\begin{equation}\label{CC}\left\{\begin{aligned}
&d\alpha_k(t)=\Bigg[A_k\alpha_k-BR_k^{-1}\big(B^\top \widetilde{\beta}_k+D^\top \widetilde{\gamma}_k+K^\top \widetilde{\alpha}_k\big)+F\sum_{l=1}^K\pi_l\mathbb E\alpha_l \Bigg]dt\\
&\qquad\qquad+\Big[-DR_k^{-1}\big(B^\top \widetilde{\beta}_k+D^\top \widetilde{\gamma}_k+K^\top \widetilde{\alpha}_k\big)+\sigma_k(t)\Big]dW^{(k)}(t),\\
&d\beta_k(t)=-\Bigg[H_k\beta_k-KR_k^{-1}\big(B^\top \widetilde{\beta}_k+D^\top \widetilde{\gamma}_k+K^\top \widetilde{\alpha}_k\big)+L\alpha_k+M\sum_{l=1}^K\pi_l\mathbb E\alpha_l\Bigg]dt+\gamma_kdW^{(k)}(t),\\
&d\widetilde{\beta}_k(t)=-\Bigg[A_k^\top \widetilde{\beta}_k+L^\top \widetilde{\alpha}_k+Q\alpha_k-(QS+ S^\top Q -S^\top QS)\sum_{l=1}^K\pi_l\mathbb E\alpha_l\\
&\qquad\qquad+\sum_{l=1}^K\pi_lF^\top \vartheta_l+\sum_{l=1}^K \pi_l(F^\top \mathbb E\check Y_l-M^\top\check X_l)\Bigg]dt+\widetilde{\gamma}_kdW^{(k)}(t),\\
&d\widetilde{\alpha}_k(t)=H_k^\top \widetilde{\alpha}_kdt,\\
&d\check X_k(t)=H_k^\top \check X_k dt,\\
&d\check Y_k(t)=-\Big[A_k^\top \check Y_k-L^\top \check X_k+Q\alpha_k\Big] dt+\check Z_kdW^{(k)}(t),\\
&d\vartheta_k(t)=-\Bigg[A_k^\top \vartheta_k-(QS+ S^\top Q-S^\top QS)\sum_{l=1}^K\pi_l\mathbb E\alpha_l+\sum_{l=1}^K\pi_lF^\top \vartheta_l+\sum_{l=1}^K\pi_l(F^\top \mathbb E\check Y_l-M^\top \check X_l)\Bigg]dt,\\
&\alpha_k(0)=\xi^{(k)},\quad \widetilde{\alpha}_k(0)=\Gamma \beta_k(0),\quad\check X_k(0)=-\Gamma\beta_k(0),\\
&\check Y_k(T)=-\Phi^\top \check X_k(T),\quad \vartheta_k(T)=0,\quad \beta_k(T)=\Phi \alpha_k(T)+\eta^{(k)},\quad \widetilde{\beta}_k(T)=\Phi^\top \widetilde{\alpha}_k(T).
\end{aligned}\right.\end{equation}
It should be noticed that stochastic system \eqref{CC} is a fully-coupled FBSDE which contains three forward SDEs, four BSDEs and some MF terms. From the analysis in Section 3, it represents some consistency properties and hence it is called \emph{consistency condition system}. The solvability of consistency condition is crucial for all large population and social optima problems. Without it the theoretical analysis will lose its significance and error estimations cannot be proceeded.

In the following, we pose a proposition to solve consistency condition system \eqref{CC}. Before that, we introduce some notation. Denote by $\mathbb X=(\alpha_1^\top,\cdots,\alpha_K^\top,\widetilde{\alpha}_1^\top,\cdots,\widetilde{\alpha}_K^\top,\check X_1^\top,\cdots,\check X_K^\top)^\top$, $\mathbb Y=( \beta_1^\top,\cdots,\beta_K^\top,\widetilde{\beta}_1^\top,\cdots,\widetilde{\beta}_K^\top,$\\ $\check Y_1^\top$, $\cdots,\check Y_K^\top$, $\vartheta_1^\top,\cdots,\vartheta_K^\top)^\top,\mathbb Z=( \gamma_1^\top,\cdots,\gamma_K^\top,\widetilde{\gamma}_1^\top,\cdots,\widetilde{\gamma}_K^\top,$ $\check Z_1^\top,$ $\cdots,\check Z_K^\top)^\top$, MF FBSDE \eqref{CC} then takes the form of
\begin{equation}\label{eq23}\left\{\begin{aligned}
&d\mathbb X=\Big[\mathbb A_1\mathbb X+\mathbb B_1\mathbb Y+\mathbb B_2\mathbb Z+\bar{\mathbb A}_1\mathbb E[\mathbb X]\Big]dt+\Big[\mathbb C\mathbb X+\mathbb D_1\mathbb Y+\mathbb D_2\mathbb Z+\Sigma_0\Big]\circ d\mathbb{W}(t),\\
&d\mathbb Y=-\Big[\mathbb A_2 \mathbb Y+\mathbb A_3\mathbb X+\mathbb B_3 \mathbb Z+\bar{\mathbb A}_2\mathbb E[\mathbb Y]+\bar{\mathbb A}_3\mathbb E[\mathbb X]\Big]dt+\left(\begin{matrix} \mathbb Z\\ \mathbf{0} \end{matrix}\right)\circ \left(\begin{matrix} d\mathbb{W}(t)\\ \mathbf{0} \end{matrix}\right),\\
&\mathbb X(0)=\Xi+\bar \Gamma \mathbb Y(0),\quad \mathbb Y(T)=\bar\Phi \mathbb X(T)+\Sigma,
\end{aligned}\right.\end{equation}
where
\tiny\begin{equation*}\begin{aligned}
&\mathbb A_1=\left(
               \begin{smallmatrix}
                 A_1 &  & &-BR_1^{-1}K^\top & & &0 & & \\
                   & \ddots &   &   & \ddots &   &   &  \ddots &  \\
                   &   & A_K &   &   & -BR_K^{-1}K^\top &   &   & 0  \\
                  0 &   &   & H_1^\top &   &   & 0  &   &   \\
                   & \ddots &   &   & \ddots &   &   & \ddots  &   \\
                   &   & 0  &   &   & H_K^\top &   &   & 0  \\
                  0&  &   & 0  &  &   & H_1^\top &   &   \\
                   & \ddots  &   &   & \ddots  &   &   & \ddots &   \\
                   &  & 0  &   &   & 0  &   &   & H_K^\top \\
               \end{smallmatrix}
             \right),   \mathbb B_1=\left(
               \begin{smallmatrix}
                 0 &  & &-BR_1^{-1}B^\top & & &0 & & &0& & \\
                   & \ddots &   &   & \ddots &   &   &\ddots & & &\ddots &  \\
                   &   & 0 &   &   & -BR_K^{-1}B^\top &   &   &0& & &0   \\
                 0 &   &   & 0 &   &   & 0  &   & &0 & &  \\
                   & \ddots &   &   & \ddots&  &   &  \ddots &  & & \ddots&  \\
                   &   & 0  &   &   & 0  &   & & 0 & &  & 0  \\
                   0 &   &   & 0 &   &   & 0  &   & &0 & &  \\
                   & \ddots &   &   & \ddots&  &   &  \ddots &  & & \ddots&  \\
                   &   & 0  &   &   & 0  &   & & 0 & &  & 0  \\
                   \end{smallmatrix}
             \right), \\
\end{aligned}\end{equation*}
\tiny\begin{equation*}\begin{aligned}
& \mathbb B_2=\left(
               \begin{smallmatrix}
                 0 &  & &-BR_1^{-1}D^\top & & &0 & &  \\
                   & \ddots &   &   & \ddots &   &   &\ddots &    \\
                   &   & 0 &   &   & -BR_K^{-1}D^\top &   &   &0  \\
                 0 &   &   & 0 &   &   & 0  &   &  \\
                   & \ddots &   &   & \ddots&  &   &  \ddots &   \\
                   &   & 0  &   &   & 0  &   & & 0  \\
                   0 &   &   & 0 &   &   & 0  &   &  \\
                   & \ddots &   &   & \ddots&  &   &  \ddots &   \\
                   &   & 0  &   &   & 0  &   & & 0  \\
                   \end{smallmatrix}
             \right), \mathbb D_1=\left(
               \begin{smallmatrix}
                 0 &  & &-DR_1^{-1}B^\top & & &0 & & &0& & \\
                   & \ddots &   &   & \ddots &   &   &\ddots & & &\ddots &  \\
                   &   & 0 &   &   & -DR_K^{-1}B^\top &   &   &0& & &0   \\
                 0 &   &   & 0 &   &   & 0  &   & &0 & &  \\
                   & \ddots &   &   & \ddots&  &   &  \ddots &  & & \ddots&  \\
                   &   & 0  &   &   & 0  &   & & 0 & &  & 0  \\
                   0 &   &   & 0 &   &   & 0  &   & &0 & &  \\
                   & \ddots &   &   & \ddots&  &   &  \ddots &  & & \ddots&  \\
                   &   & 0  &   &   & 0  &   & & 0 & &  & 0  \\
                   \end{smallmatrix}
             \right),\\
&\mathbb C=\left(
               \begin{smallmatrix}
                 0 &  & &-DR_1^{-1}K^\top & & &0 & & \\
                   & \ddots &   &   & \ddots &   &   &  \ddots &  \\
                   &   & 0 &   &   & -DR_K^{-1}K^\top &   &   & 0  \\
                  0 &   &   & 0 &   &   & 0  &   &   \\
                   & \ddots &   &   & \ddots &   &   & \ddots  &   \\
                   &   & 0  &   &   & 0 &   &   & 0  \\
                  0&  &   & 0  &  &   & 0 &   &   \\
                   & \ddots  &   &   & \ddots  &   &   & \ddots &   \\
                   &  & 0  &   &   & 0  &   &   & 0 \\
               \end{smallmatrix}
             \right), \mathbb A_3=\left(
               \begin{smallmatrix}
                 L &  & &-KR_1^{-1}K^\top & & &0 & & \\
                   & \ddots &   &   & \ddots &   &   &  \ddots &  \\
                   &   & L &   &   & -KR_K^{-1}K^\top &   &   & 0 \\
                  Q &   &   & L^\top &   &   & -M^\top\pi_1 & \cdots & -M^\top\pi_K \\
                   & \ddots &   &   & \ddots &  &\vdots & &\vdots  \\
                   &   & Q  &   &   & L^\top &   -M^\top\pi_1 & \cdots & -M^\top\pi_K  \\
                  Q&  &   & 0  &  &   & -L^\top & &  \\
                   & \ddots  &   &   & \ddots  & & &\ddots&  \\
                   &  & Q  &   &   & 0  & & & -L^\top\\
                  0 &   &   & 0 &   &   & -M^\top\pi_1 & \cdots & -M^\top\pi_K \\
                   & \ddots &   &   & \ddots  & &\vdots & &\vdots  \\
                   &   & 0  &   &   &  0 & -M^\top\pi_1 & \cdots & -M^\top\pi_K  \\
               \end{smallmatrix}
             \right),
 \\
&\mathbb D_2=\left(
               \begin{smallmatrix}
                 0 &  & &-DR_1^{-1}D^\top & & &0 & &  \\
                   & \ddots &   &   & \ddots &   &   &\ddots &   \\
                   &   & 0 &   &   & -DR_K^{-1}D^\top &   &   &0&   \\
                 0 &   &   & 0 &   &   & 0  &   &  \\
                   & \ddots &   &   & \ddots&  &   &  \ddots &   \\
                   &   & 0  &   &   & 0  &   & & 0   \\
                   0 &   &   & 0 &   &   & 0  &   &   \\
                   & \ddots &   &   & \ddots&  &   &  \ddots &   \\
                   &   & 0  &   &   & 0  &   & & 0  \\
                   \end{smallmatrix}
             \right),\bar{\mathbb A}_1=\left(
                         \begin{smallmatrix}
                           F\pi_1 & \cdots & F\pi_K & 0 &  &  &0 &  &    \\
                           \vdots &   & \vdots &  & \ddots&  &  & \ddots&     \\
                           F\pi_1 & \cdots & F\pi_K &   &  &0 &  & &0     \\
                           0  &   &   & 0 &   &   & 0 &   &     \\
                             & \ddots  &   &  & \ddots  &   & & \ddots  &     \\
                             &   & 0  & &   & 0  & &   &  0   \\
                         \end{smallmatrix}
                       \right),
\mathbb{W}=\left(
       \begin{smallmatrix}
         W^{(1)} \\
         \vdots \\
         W^{(K)} \\
         W^{(1)} \\
         \vdots \\
         W^{(K)} \\
         W^{(1)} \\
         \vdots \\
         W^{(K)} \\
       \end{smallmatrix}
     \right),
  \\
&\mathbb A_2=\left(
               \begin{smallmatrix}
                 H_1 &  & &-KR_1^{-1}B^\top & & &0 & & & 0& & \\
                   & \ddots &   &   & \ddots &   &   &  \ddots & & &\ddots&  \\
                   &   & H_K &   &   & -KR_K^{-1}B^\top &   &   & 0 & & & 0 \\
                  0 &   &   & A_1^\top &   &   & 0  &   &  & F^\top\pi_1 & \cdots & F^\top\pi_K \\
                   & \ddots &   &   & \ddots &   &   & \ddots  & &\vdots & &\vdots  \\
                   &   & 0  &   &   & A_K^\top &   &   & 0 & F^\top\pi_1 & \cdots & F^\top\pi_K  \\
                  0&  &   & 0  &  &   & A_1^\top &   & & 0& &  \\
                   & \ddots  &   &   & \ddots  &   &   & \ddots & & &\ddots&  \\
                   &  & 0  &   &   & 0  &   &   & A_K^\top & & & 0\\
                  0 &   &   & 0 &   &   & 0  &   &  & A_1^\top+F^\top\pi_1 & \cdots & F^\top\pi_K \\
                   & \ddots &   &   & \ddots &   &   & \ddots  & &\vdots & &\vdots  \\
                   &   & 0  &   &   & 0 &   &   & 0 & F^\top\pi_1 & \cdots & A_K^\top+F^\top\pi_K  \\
               \end{smallmatrix}
             \right),\\
&\bar{\mathbb A}_2=\left(
               \begin{smallmatrix}
                 0 &  & &0 & & &0 & & & 0& & \\
                   & \ddots &   &   & \ddots &   &   &  \ddots & & &\ddots&  \\
                   &   & 0 &   &   & 0 &   &   & 0 & & & 0 \\
                  0 &   &   & 0 &   &   & F^\top \pi_1 & \cdots  &F^\top \pi_K & 0  &   &\\
                   & \ddots &   &   & \ddots &   &  \vdots &   & \vdots& &\ddots &  \\
                   &   & 0  &   &   & 0 & F^\top \pi_1  &  \cdots & F^\top \pi_K &  &  & 0  \\
                  0&  &   & 0  &  &   & 0 &   & & 0& &  \\
                   & \ddots  &   &   & \ddots  &   &   & \ddots & & &\ddots&  \\
                   &  & 0  &   &   & 0  &   &   & 0 & & & 0\\
                   0 &   &   & 0 &   &   & F^\top \pi_1 & \cdots  &F^\top \pi_K & 0  &   &\\
                   & \ddots &   &   & \ddots &   &  \vdots &   & \vdots& &\ddots &  \\
                   &   & 0  &   &   & 0 & F^\top \pi_1  &  \cdots & F^\top \pi_K &  &  & 0  \\
               \end{smallmatrix}
             \right),
\bar{\mathbb{A}}_3=\left(
               \begin{smallmatrix}
                 M\pi_1 & \cdots & M\pi_K &0 & & &0 & & \\
                  \vdots &  & \vdots  &   & \ddots &   &   &  \ddots &  \\
                  M\pi_1 & \cdots  & M\pi_K &   &   & 0 &   &   & 0 \\
                  -(QS+S^\top Q-S^\top QS)\pi_1 & \cdots  & -(QS+S^\top Q-S^\top QS)\pi_K  & 0 &   &   & 0 &  &  \\
                  \vdots &  & \vdots  &   & \ddots &  &  & \ddots&   \\
                  -(QS+S^\top Q-S^\top QS)\pi_1 & \cdots  & -(QS+S^\top Q-S^\top QS)\pi_K  &   &   & 0 &    &  & 0  \\
                   0&  &   & 0  &  &   & 0 & &  \\
                  & \ddots  &   &   & \ddots  & & &\ddots&  \\
                   &  & 0  &   &   & 0  & & & 0\\
                    -(QS+S^\top Q-S^\top QS)\pi_1 & \cdots  & -(QS+S^\top Q-S^\top QS)\pi_K  & 0 &   &   & 0 &   &   \\
                  \vdots &   & \vdots  &   & \ddots  & &  & \ddots&   \\
                   -(QS+S^\top Q-S^\top QS)\pi_1 & \cdots  & -(QS+S^\top Q-S^\top QS)\pi_K  &   &   &  0 &   &   & 0  \\
               \end{smallmatrix}
             \right),\\
             & \mathbb B_3=\left(
               \begin{smallmatrix}
                 0 &  & &-KR_1^{-1}D^\top & & &0 & &  \\
                   & \ddots &   &   & \ddots &   &   &  \ddots &  \\
                   &   & 0 &   &   & -KR_K^{-1}D^\top &   &   & 0 \\
                  0 &   &   & 0 &   &   & 0  &   &   \\
                   & \ddots &   &   & \ddots &   &   & \ddots  &   \\
                   &   & 0  &   &   & 0 &   &   & 0   \\
                  0&  &   & 0  &  &   & 0 &   &   \\
                   & \ddots  &   &   & \ddots  &   &   & \ddots &   \\
                   &  & 0  &   &   & 0  &   &   & 0 \\
                  0 &   &   & 0 &   &   & 0  &   &    \\
                   & \ddots &   &   & \ddots &   &   & \ddots  &    \\
                   &   & 0  &   &   & 0 &   &   & 0  \\
               \end{smallmatrix}
             \right),\bar{\Phi}=\left(
               \begin{smallmatrix}
                 \Phi &   &   &0 & & &0 & & \\
                    &  \ddots&   &   & \ddots &   &   &  \ddots &  \\
                    &    & \Phi &   &   & 0 &   &   & 0 \\
                  0 &    &    & \Phi^\top &   &   & 0 &  &  \\
                    & \ddots &    &   & \ddots &  &  & \ddots&   \\
                    &    & 0  &   &   & \Phi^\top &    &  & 0  \\
                   0&  &   & 0  &  &   & -\Phi^\top & &  \\
                  & \ddots  &   &   & \ddots  & & &\ddots&  \\
                   &  & 0  &   &   & 0  & & & -\Phi^\top\\
                    0 &    &    & 0 &   &   & 0 &   &   \\
                    &  \ddots &    &   & \ddots  & &  & \ddots&   \\
                     &    & 0  &   &   &  0 &   &   & 0  \\
               \end{smallmatrix}
             \right),\Sigma=\left( \begin{smallmatrix}
                                                            \eta^{(1)} \\
                                                            \vdots \\
                                                            \eta^{(K)} \\
                                                            0 \\
                                                            \vdots \\
                                                            0 \\
                                                            0 \\
                                                            \vdots \\
                                                            0 \\
                                                            0 \\
                                                            \vdots \\
                                                            0 \\
                                                          \end{smallmatrix}
                                                        \right),
                                                        \\
\end{aligned}\end{equation*}
\tiny\begin{equation*}\begin{aligned}
&\bar{\Gamma}=\left(
               \begin{smallmatrix}
                 0 &  & &0 & & &0 & & &0& & \\
                   & \ddots &   &   & \ddots &   &   &\ddots & & &\ddots &  \\
                   &   & 0 &   &   & 0 &   &   &0& & &0   \\
                 \Gamma &   &   & 0 &   &   & 0  &   & &0 & &  \\
                   & \ddots &   &   & \ddots&  &   &  \ddots &  & & \ddots&  \\
                   &   & \Gamma  &   &   & 0  &   & & 0 & &  & 0  \\
                   -\Gamma &   &   & 0 &   &   & 0  &   & &0 & &  \\
                   & \ddots &   &   & \ddots&  &   &  \ddots &  & & \ddots&  \\
                   &   & -\Gamma  &   &   & 0  &   & & 0 & &  & 0  \\
                   \end{smallmatrix}
             \right),\Sigma_0=\left(
       \begin{smallmatrix}
         \sigma_1 \\
         \vdots \\
         \sigma_K \\
         0 \\
         \vdots \\
         0 \\
         0 \\
         \vdots \\
         0 \\
       \end{smallmatrix}
     \right),\Xi=\left(
       \begin{smallmatrix}
         \xi^{(1)} \\
         \vdots \\
         \xi^{(K)} \\
         0 \\
         \vdots \\
         0 \\
         0 \\
         \vdots \\
         0 \\
       \end{smallmatrix}
     \right).
\end{aligned}\end{equation*}
\normalsize
Here, ``$\circ$" denotes the \emph{generalized} Hadamard product. It is well known that Hadamard product (also called Schur product or entry-wise product) is a binary operation between two matrices of the same dimensions, and it  produces another matrix in which each element $(i,j)$ is the product of the elements $(i,j)$ in the original matrices. In this part we formally express the Hadamard product (called the \emph{generalized} Hadamard product), though dimension of the dynamic is $n$, which is different from that of the Brownian motion (1-dimension). 
\begin{proposition}\label{phi}
Under \emph{(}A1\emph{)}-\emph{(}A4\emph{)}, assume that
\begin{equation}\label{eq24}\left\{\begin{aligned}
&\dot{\phi}+\phi\big(\mathcal{A}_1+\hat\Gamma\mathcal{A}_3\big)+\big(\mathcal{A}_2+\mathcal{A}_3\hat \Gamma\big)\phi+\phi\big(\mathcal{A}_1\hat\Gamma+\hat\Gamma\mathcal{A}_3\hat\Gamma+\hat\Gamma\mathcal{A}_2+\mathcal{B}_1\big)\phi+\mathcal{A}_3\\
&\qquad-\big[\phi\big(\mathcal{B}_2+\hat\Gamma\mathcal{B}_3\big)+\mathcal{B}_3\big]\big[\phi\big(\mathcal{D}_2-\hat\Gamma\hat I\big)-\hat I\big]^{-1}\big[\phi\mathcal{C}+\phi\big(\mathcal{C}\hat\Gamma+\mathcal{D}_1\big)\phi\big]=0,\\
& \phi(T)= \bar I\hat\Phi,
\end{aligned}\right.\end{equation}
where
\begin{equation*}\begin{aligned}
&\mathcal{A}_j=\left(
                 \begin{array}{cc}
                   \mathbb A_j +\bar{\mathbb A}_j & \mathbf{0} \\
                   \mathbf{0} & \mathbb A_j \\
                 \end{array}
               \right), \mathcal{B}_j=\left(
                 \begin{array}{cc}
                   \mathbb B_j  & \mathbf{0} \\
                   \mathbf{0} & \mathbb B_j \\
                 \end{array}
               \right),\mathcal{C}=\left(
                           \begin{array}{cc}
                             \mathbf{0} & \mathbf{0} \\
                             \mathbb{C} & \mathbb{C} \\
                           \end{array}
                         \right),\mathcal{D}_l=\left(
                           \begin{array}{cc}
                             \mathbf{0} & \mathbf{0} \\
                             \mathbb{D}_l & \mathbb{D}_l \\
                           \end{array}
                         \right),\\
& \hat{I}=\left(
                           \begin{array}{cc}
                             \mathbf{0} & \mathbf{0} \\
                             I & I \\
                             \mathbf{0}  &  \mathbf{0}
                           \end{array}
                         \right)_{8Kn\times 6Kn},\bar I=\left(
                                                                              \begin{array}{cc}
                                                                                I-\bar \Phi\bar \Gamma & \mathbf{0} \\
                                                                                \mathbf{0} & I \\
                                                                              \end{array}
                                                                            \right)^{-1},\hat{\Gamma}=\left(
                           \begin{array}{cc}
                            \bar\Gamma & \mathbf{0} \\
                             \mathbf{0} & \mathbf{0} \\
                           \end{array}
                         \right),\hat{\Phi}=\left(
                           \begin{array}{cc}
                            \bar\Phi & \mathbf{0} \\
                             \mathbf{0} &  \bar\Phi \\
                           \end{array}
                         \right),\ j=1,2,3,\ l=1,2
\end{aligned}\end{equation*}
 admits a unique solution $\phi(\cdot)$ over $[0,T]$ such that $\phi (\mathcal{D}_2-\hat \Gamma\hat I)-\hat I$ is invertible. Then, consistency condition system \eqref{CC} has a solution.
\end{proposition}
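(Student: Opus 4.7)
The plan is to decouple the fully-coupled mean-field FBSDE \eqref{eq23} via a deterministic Riccati-type ansatz relating $\mathbb{Y}$ linearly to $\mathbb{X}$, so that existence of the triple $(\mathbb{X},\mathbb{Y},\mathbb{Z})$ reduces to existence of $\phi(\cdot)$. First I would absorb the mean-field terms by adjoining expectations: setting $\mathcal{X}:=(\mathbb{E}[\mathbb{X}]^\top,\mathbb{X}^\top)^\top$, $\mathcal{Y}:=(\mathbb{E}[\mathbb{Y}]^\top,\mathbb{Y}^\top)^\top$ and $\mathcal{Z}$ analogously, one sees that taking expectations in \eqref{eq23} produces a deterministic subsystem for $(\mathbb{E}[\mathbb{X}],\mathbb{E}[\mathbb{Y}])$; stacking with the original stochastic equations then yields a standard (non-MF) FBSDE whose coefficient matrices are precisely the block arrays $\mathcal{A}_j,\mathcal{B}_j,\mathcal{C},\mathcal{D}_l$ appearing in \eqref{eq24}, with the non-square $\hat I$ encoding the embedding of the diffusion variable into the $\mathcal{Y}$-valued equation.

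To accommodate the coupled initial condition $\mathbb{X}(0)=\Xi+\bar\Gamma\mathbb{Y}(0)$, rather than the naive ansatz $\mathcal{Y}=\phi\mathcal{X}$ I would use the shifted form $\mathcal{Y}(t)=\phi(t)\bigl[\mathcal{X}(t)-\hat\Gamma\mathcal{Y}(t)\bigr]+\psi(t)$, equivalently $\mathcal{Y}=(I+\phi\hat\Gamma)^{-1}(\phi\mathcal{X}+\psi)$. This makes the ``free'' forward piece $\mathcal{X}-\hat\Gamma\mathcal{Y}$ begin from deterministic data, and is what produces the $\hat\Gamma$ factors in \eqref{eq24}. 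Applying It\^o's formula to the ansatz and matching diffusion coefficients in the two resulting expressions for $d\mathcal{Y}$ gives a linear equation in $\mathcal{Z}$ of the form $\bigl[\phi(\mathcal{D}_2-\hat\Gamma\hat I)-\hat I\bigr]\mathcal{Z}=-\phi\bigl[\mathcal{C}+(\mathcal{C}\hat\Gamma+\mathcal{D}_1)\phi\bigr]\mathcal{X}+(\text{offset})$; the invertibility hypothesis is exactly what lets me solve for $\mathcal{Z}$ as a bounded affine function of $\mathcal{X}$. Substituting back into the drift identity and isolating the coefficient of $\mathcal{X}$ then reproduces \eqref{eq24}, while the inhomogeneous part gives a linear backward ODE for $\psi$. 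Evaluating the ansatz at $T$ against $\mathcal{Y}(T)=\hat\Phi\mathcal{X}(T)+(\text{data})$ and using the push-through identity $\hat\Phi(I-\hat\Gamma\hat\Phi)^{-1}=\bar I\hat\Phi$ recovers $\phi(T)=\bar I\hat\Phi$.

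With $\phi$ given by hypothesis and $\psi$ obtained from its linear backward ODE, the ansatz expresses $\mathbb{Y}$ and $\mathbb{Z}$ as deterministic-coefficient affine functions of $\mathbb{X}$ and $\mathbb{E}[\mathbb{X}]$, uniformly bounded on $[0,T]$ under (A1)--(A4). Substituting into the forward part of \eqref{eq23} yields a linear MF-SDE for $\mathbb{X}$ with $L^\infty$ coefficients, which admits a unique strong solution by standard theory. Recovering $(\mathbb{Y},\mathbb{Z})$ from the ansatz, the BSDE dynamics and both boundary conditions hold by construction: the initial coupling is the defining relation of the ansatz at $t=0$, while the terminal coupling follows from the choice of $(\phi(T),\psi(T))$.

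I expect the main obstacle to lie not in any existence question -- those are provided by hypothesis or standard -- but in the algebraic matching itself. The ``generalized'' Hadamard product in \eqref{eq23}, the non-square $\hat I$, and the varied block structures of $\mathcal{A}_j$, $\bar{\mathbb{A}}_j$, $\hat\Gamma$ must be threaded through carefully to verify that, after elimination of $\mathcal{Z}$, the Riccati quadratic collapses to exactly the form displayed in \eqref{eq24}. Once that bookkeeping is done, the remainder is a standard Riccati decoupling of Yong--Zhou type for MF-FBSDE.
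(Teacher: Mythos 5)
Your overall strategy is the one the paper uses: augment with expectations to remove the mean-field terms, shift the forward variable to absorb the coupled initial condition, posit $\mathcal{Y}=\phi\tilde{\mathcal{X}}+\psi$, eliminate $\mathcal{Z}$ via the assumed invertibility of $\phi(\mathcal{D}_2-\hat\Gamma\hat I)-\hat I$, and read off \eqref{eq24} from the drift identity. Two bookkeeping choices, however, differ from what is needed to land on the stated coefficient matrices. First, the second block of $\mathcal{X}$ must be the centered quantity $\mathbb X-\mathbb E[\mathbb X]$ (and likewise for $\mathcal Y,\mathcal Z$); with your stacking $(\mathbb E[\mathbb X],\mathbb X)$ the matrix $\mathcal{A}_1$ acquires an off-diagonal $\bar{\mathbb A}_1$ block and $\mathcal{C}$ does not have two identical column blocks, so the algebra would not ``collapse to exactly'' \eqref{eq24}. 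Second, the shift should be $\tilde{\mathcal{X}}=\mathcal{X}-\hat\Gamma\mathcal{Y}-\hat\Xi$, giving $\tilde{\mathcal{X}}(0)=0$ exactly; your $\mathcal{X}-\hat\Gamma\mathcal{Y}$ starts from $\hat\Xi$, which is a \emph{random} ($\mathcal F_0$-measurable) vector, not deterministic data. Working with $\tilde{\mathcal{X}}$ as the forward unknown also lets you close the decoupled forward SDE without ever inverting $I+\phi\hat\Gamma$, which your rearrangement $\mathcal{Y}=(I+\phi\hat\Gamma)^{-1}(\phi\mathcal{X}+\psi)$ and your final substitution into the equation for $\mathbb X$ tacitly require but which is not among the hypotheses.

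The one step that genuinely fails as written is the claim that the inhomogeneous part gives a linear backward \emph{ODE} for $\psi$. The terminal value forced by the ansatz is $\psi(T)=\bar I(\hat\Phi\hat\Xi+\hat\Sigma)$, an $\mathcal F_T$-measurable random vector (it contains the $\eta^{(k)}$), and $\psi$ must be adapted for $\mathcal{Y}=\phi\tilde{\mathcal{X}}+\psi$ to be adapted; a backward ODE can deliver neither. One must instead posit $d\psi=a\,dt+\tilde I\,(b\circ d\mathcal{W})$ and determine $(a,b)$ as the adapted solution of a linear BSDE. The martingale integrand $b$ is not decoration: it enters the diffusion-matching identity, so the solved expression for $\mathcal{Z}$ carries a $\tilde I b$ term, which in turn feeds back into the drift of the $\psi$-equation. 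Only the coefficient of $\tilde{\mathcal{X}}$ in the drift identity is unaffected, which is why \eqref{eq24} itself still comes out as you describe. Once $\psi$ is upgraded to a BSDE (standard linear BSDE theory gives a unique adapted $(\psi,b)$ given $\phi$), the remainder of your construction goes through and coincides with the paper's proof.
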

\begin{proof} Taking the expectation on both sides of \eqref{eq23}, we have
\begin{equation}\label{eq25}\left\{\begin{aligned}
&d\mathbb E[\mathbb X]=\Big[(\mathbb A_1+\bar{\mathbb A}_1)\mathbb E[\mathbb X]+\mathbb B_1\mathbb E[\mathbb Y]+\mathbb B_2\mathbb E[\mathbb Z]\Big]dt,\\
&d\mathbb E[\mathbb Y]=-\Big[(\mathbb A_2 +\bar{\mathbb A}_2)\mathbb E[\mathbb Y]+(\mathbb A_3 +\bar{\mathbb A}_3)\mathbb E[\mathbb X]+ \mathbb B_3\mathbb E[\mathbb Z]\Big]dt,\\
&\mathbb E[\mathbb X](0)=\mathbb E[\Xi]+\bar\Gamma\mathbb E[\mathbb Y](0),\quad \mathbb E[\mathbb Y](T)=\bar\Phi\mathbb E[\mathbb X](T)+\mathbb E[\Sigma].
\end{aligned}\right.\end{equation}
Denote \begin{equation*}\begin{aligned}\mathcal{X}=\left(
                       \begin{array}{c}
                         \mathbb E[\mathbb X] \\
                         \mathbb X-\mathbb E[\mathbb X] \\
                       \end{array}
                     \right),\qquad\mathcal{Y}=\left(
                       \begin{array}{c}
                         \mathbb E[\mathbb Y] \\
                         \mathbb Y-\mathbb E[\mathbb Y] \\
                       \end{array}
                     \right),\qquad\mathcal{Z}=\left(
                       \begin{array}{c}
                         \mathbb E[\mathbb Z] \\
                         \mathbb Z-\mathbb E[\mathbb Z] \\
                       \end{array}
                     \right),
\end{aligned}\end{equation*}
$$\mathcal{W}=\left(
                       \begin{array}{c}
                         \mathbb{W} \\
                         \mathbb{W} \\
                       \end{array}
                     \right),\qquad \hat{\Xi}=\left(
                       \begin{array}{c}
                         \mathbb E[\Xi] \\
                         \Xi-\mathbb E[\Xi] \\
                       \end{array}
                     \right),\qquad\hat{\Sigma}=\left(
                       \begin{array}{c}
                         \mathbb E[\Sigma] \\
                         \Sigma-\mathbb E[\Sigma] \\
                       \end{array}
                     \right),\qquad\hat{\Sigma}_0=\left(
                       \begin{array}{c}
                         \mathbf{0} \\
                         \Sigma_0 \\
                       \end{array}
                     \right).$$
Thus MF FBSDE \eqref{eq23} is equivalent to
\begin{equation}\label{eq27}\left\{\begin{aligned}
&d\mathcal{X}=\Big[\mathcal{A}_1 \mathcal{X}+\mathcal{B}_1 \mathcal{Y}+\mathcal{B}_2 \mathcal{Z}\Big]dt+\Big[\mathcal{C} \mathcal{X}+\mathcal{D}_1 \mathcal{Y}+\mathcal{D}_2 \mathcal{Z}+\hat{\Sigma}_0\Big]\circ d\mathcal{W}(t),\\
&d\mathcal{Y}=-\Big[\mathcal{A}_2 \mathcal{Y}+\mathcal{A}_3\mathcal{X}+\mathcal{B}_3\mathcal{Z} \Big]dt+\hat I\big(\mathcal{Z}\circ d\mathcal{W}(t)\big),\\
&\mathcal{X}(0)=\hat{\Xi} +\hat \Gamma\mathcal{Y}(0),\qquad  \mathcal{Y}(T)=\hat\Phi\mathcal{X}(T)+\hat\Sigma.
\end{aligned}\right.\end{equation}
Define $\tilde{\mathcal{X}}(t)=\mathcal{X}(t)-\hat\Gamma\mathcal{Y}(t)-\hat\Xi, \  t\in [0,T].$ Then
$\mathcal{X}(0)=\hat{\Xi} +\hat \Gamma\mathcal{Y}(0)$ implies $\tilde{\mathcal{X}}(0)= 0$. By \eqref{eq27} and $d\tilde{\mathcal{X}}=d\mathcal{X}-\hat \Gamma d\mathcal{Y},$ we have
\begin{equation}\label{eq28}\left\{\begin{aligned}
&d\tilde{\mathcal{X}}=\Big[(\mathcal{A}_1+\hat\Gamma\mathcal{A}_3)\tilde{\mathcal{X}}+(\mathcal{A}_1\hat\Gamma+\hat\Gamma\mathcal{A}_3\hat\Gamma+\hat\Gamma\mathcal{A}_2+\mathcal{B}_1)\mathcal{Y}+(\mathcal{B}_2+\hat\Gamma\mathcal{B}_3)\mathcal{Z}\\
&\qquad+(\mathcal{A}_1+\hat\Gamma\mathcal{A}_3)\hat\Xi\Big]dt+\Big[\mathcal{C}\tilde{\mathcal{X}}+(\mathcal{C}\hat\Gamma+\mathcal{D}_1)\mathcal{Y}+(\mathcal{D}_2-\hat\Gamma\hat I)\mathcal{Z}+\mathcal{C}\hat\Xi+\hat{\Sigma}_0\Big]\circ d\mathcal{W}(t),\\
&d\mathcal{Y}=-\Big[(\mathcal{A}_2+\mathcal{A}_3\hat\Gamma) \mathcal{Y}+\mathcal{A}_3\tilde{\mathcal{X}}+\mathcal{B}_3\mathcal{Z}+\mathcal{A}_3\hat\Xi\Big]dt+\hat I\big(\mathcal{Z}\circ d\mathcal{W}(t)\big),\\
&\tilde{\mathcal{X}}(0)=0,\qquad  \mathcal{Y}(T)=\bar I\hat\Phi\tilde{\mathcal{X}}(T)+\bar I(\hat\Phi\hat\Xi+\hat\Sigma),
\end{aligned}\right.\end{equation}
which is a standard fully-coupled FBSDE. Here, $\bar I$ is defined by $\left(
                                                                              \begin{smallmatrix}
                                                                                I-\bar \Phi\bar \Gamma & \mathbf{0} \\
                                                                                \mathbf{0} & I \\
                                                                              \end{smallmatrix}
                                                                            \right)^{-1}
$. 
In fact, we can easily obtain that $\left(
                                                                              \begin{smallmatrix}
                                                                                I-\bar \Phi\bar \Gamma & \mathbf{0} \\
                                                                                \mathbf{0} & I \\
                                                                              \end{smallmatrix}
                                                                            \right)$ is a lower triangular matrix and the diagonal elements are all one. Thus it is invertible.
Assume that $\tilde{\mathcal{X}}$ and $\mathcal{Y}$ have the following relationship $$\mathcal Y(t)=\phi(t) \tilde{\mathcal X}(t)+\psi(t), \qquad t\in [0,T],$$
where $\phi:[0,T]\rightarrow \mathbb{R}^{8Kn\times 6Kn}$ is a deterministic matrix-valued function and $\psi:[0,T]\times \Omega\rightarrow \mathbb{R}^{8Kn}$ is an $\{\mathcal F_t\}_{t\geq 0}$-adapted process. Now we will derive $\phi(\cdot)$ and $\psi(\cdot)$. The terminal values of $\tilde{\mathcal{X}}$ and $\mathcal{Y}$ imply that $\phi(T)=\bar I\hat \Phi,\ \ \psi(T)= \bar I(\hat\Phi\hat\Xi+\hat\Sigma).$ Since $\hat\Xi\in L_{\mathcal F_0^W}^2(\Omega;\mathbb R^{6Kn})$, $\hat\Sigma\in L_{\mathcal F_T^W}^2(\Omega;\mathbb R^{8Kn})$, and $\psi(\cdot)$ is required to be $\{\mathcal F_t^W\}_{t\geq 0}$-adapted, we suppose that $\psi(\cdot)$ satisfies a BSDE
\begin{equation}\label{eq29}\left\{\begin{aligned}
&d\psi(t)=a(t)dt+\tilde I\big(b(t) \circ d\mathcal{W}(t)\big),\\
&\psi(T)=\bar I(\hat\Phi\hat\Xi+\hat\Sigma),
\end{aligned}\right.\end{equation}
where $(a(\cdot),b(\cdot))\in L^2_{\mathcal F^W}(0,T;\mathbb R^{8Kn})\times L^2_{\mathcal F^W}(0,T;\mathbb R^{6Kn})$ is undetermined; $\tilde I$ is the $8Kn\times 6Kn$-dimensional matrix, in which the elements are all 1. Here, the given matrix $\tilde I$ plays a role in increasing the dimension of $b(t) \circ d\mathcal{W}(t)$ to coincide with $\psi(t)$.
Applying It\^{o}'s formula to $\phi(t) \tilde{\mathcal X}(t)+\psi(t)$ and comparing the coefficients with the second equation in \eqref{eq28}, we get
\begin{equation*}\begin{aligned}
&\Big[\dot{\phi}+\phi\big(\mathcal{A}_1+\hat\Gamma\mathcal{A}_3\big)+\big(\mathcal{A}_2+\mathcal{A}_3\hat \Gamma\big)\phi+\phi\big(\mathcal{A}_1\hat\Gamma+\hat\Gamma\mathcal{A}_3\hat\Gamma+\hat\Gamma\mathcal{A}_2+\mathcal{B}_1\big)\phi+\mathcal{A}_3\Big]\tilde{\mathcal X}\\
&\quad +\phi\big(\mathcal{A}_1\hat\Gamma+\hat\Gamma\mathcal{A}_3\hat\Gamma+\hat\Gamma\mathcal{A}_2+\mathcal{B}_1\big)\psi+\big(\mathcal{A}_2+\mathcal{A}_3\hat\Gamma\big)\psi+\phi\big(\mathcal{B}_2+\hat\Gamma\mathcal{B}_3\big)\mathcal Z+\mathcal B_3\mathcal Z\\
&\quad+\phi\big(\mathcal{A}_1+\hat\Gamma\mathcal{A}_3\big)\hat\Xi+\mathcal{A}_3\hat\Xi+a=0,
\end{aligned}\end{equation*}
and
\begin{equation*}\begin{aligned}
\phi&\Big(\Big[\big(\mathcal{C}+\big(\mathcal{C}\hat\Gamma+\mathcal{D}_1\big)\phi\big)\tilde{\mathcal X}  +\big(\mathcal{D}_2-\hat\Gamma\hat I\big)\mathcal{Z}+\big(\mathcal{C}\hat\Gamma+\mathcal{D}_1\big)\psi+\mathcal{C}\hat\Xi+\hat{\Sigma}_0\Big]\circ d\mathcal{W}(t)\Big)\\
&+\tilde I\big(b(t) \circ d\mathcal{W}(t)\big)=\hat I\big(\mathcal{Z}\circ d\mathcal{W}(t)\big).
\end{aligned}\end{equation*}
By some matrix calculations, we derive
$$\Big[\phi\mathcal{C}+\phi\big(\mathcal{C}\hat\Gamma+\mathcal{D}_1\big)\phi\Big]\tilde{\mathcal X}+\phi\big(\mathcal{D}_2-\hat\Gamma\hat I\big)\mathcal{Z}+\phi\big(\mathcal{C}\hat\Gamma+\mathcal{D}_1\big)\psi-\hat I\mathcal{Z}+\phi\big(\mathcal{C}\hat\Xi+\hat{\Sigma}_0\big)+\tilde Ib=0.$$
Since $\phi\big(\mathcal{D}_2-\hat\Gamma\hat I\big)-\hat I$ is invertible, it follows that $$\mathcal{Z}=-\big[\phi\big(\mathcal{D}_2-\hat\Gamma\hat I\big)-\hat I\big]^{-1}\Big[\Big(\phi\mathcal{C}+\phi\big(\mathcal{C}\hat\Gamma+\mathcal{D}_1\big)\phi\Big)\tilde{\mathcal X}+\phi\big(\mathcal{C}\hat\Gamma+\mathcal{D}_1\big)\psi+\phi\big(\mathcal{C}\hat\Xi+\hat{\Sigma}_0\big)+\tilde Ib\Big].$$
Noticing \eqref{eq24}, we have
\begin{equation*}\begin{aligned}
a=&-\Big[\phi\big(\mathcal{A}_1\hat\Gamma+\hat\Gamma\mathcal{A}_3\hat\Gamma+\hat\Gamma\mathcal{A}_2+\mathcal{B}_1\big)\psi+\big(\mathcal{A}_2+\mathcal{A}_3\hat\Gamma\big)\psi+\phi\big(\mathcal{A}_1+\hat\Gamma\mathcal{A}_3\big)\hat\Xi+\mathcal{A}_3\hat\Xi\\
&-\big[\phi\big(\mathcal{B}_2+\hat\Gamma\mathcal{B}_3\big)+\mathcal{B}_3\big]\big[\phi\big(\mathcal{D}_2-\hat\Gamma\hat I\big)-\hat I\big]^{-1}\big[\phi\big(\mathcal{C}\hat\Gamma+\mathcal{D}_1\big)\psi+\phi\big(\mathcal{C}\hat\Xi+\hat{\Sigma}_0\big)+\tilde Ib\big]\Big].
\end{aligned}\end{equation*}
Then equation \eqref{eq29} has the form of
\begin{equation}\label{eq30}\left\{\begin{aligned}
d\psi=&\ -\Big\{\Big[\mathcal{A}_2+\mathcal{A}_3\hat\Gamma+\phi\big(\mathcal{A}_1\hat\Gamma+\hat\Gamma\mathcal{A}_3\hat\Gamma+\hat\Gamma\mathcal{A}_2+\mathcal{B}_1\big)\\
&\ -\big[\phi\big(\mathcal{B}_2+\hat\Gamma\mathcal{B}_3\big)+\mathcal{B}_3\big]\big[\phi\big(\mathcal{D}_2-\hat\Gamma\hat I\big)-\hat I\big]^{-1}\phi\big(\mathcal{C}\hat\Gamma+\mathcal{D}_1\big)\Big]\psi+\big[\phi\big(\mathcal{A}_1+\hat\Gamma\mathcal{A}_3\big)\\
&\ +\mathcal{A}_3\big]\hat\Xi -\big[\phi\big(\mathcal{B}_2+\hat\Gamma\mathcal{B}_3\big)+\mathcal{B}_3\big]\big[\phi\big(\mathcal{D}_2-\hat\Gamma\hat I\big)-\hat I\big]^{-1}\big(\phi\mathcal{C}\hat\Xi+\phi\hat{\Sigma}_0+\tilde Ib\big)\Big\}dt\\
&\ +\tilde I\big(b(t) \circ d\mathcal{W}(t)\big), \qquad \psi(T)=\bar I(\hat\Phi\hat\Xi+\hat\Sigma).
\end{aligned}\right.\end{equation}

If \eqref{eq24} admits a solution $\phi(\cdot)$ such that $\phi\big(\mathcal{D}_2-\hat\Gamma\hat I\big)-\hat I$ is invertible, BSDE \eqref{eq30} admits a unique adapted solution $(\psi(\cdot),b(\cdot))$. Then the equation of $\tilde{\mathcal{X}}$ (SDE)
admits a unique solution $\tilde{\mathcal{X}}(\cdot)$. Further, $(\mathcal Y(\cdot),\mathcal Z(\cdot))$ is derived. Then $\mathcal X(\cdot)$ is obtained. The proof is complete.
\end{proof}

\begin{remark}
Notice that \eqref{eq27} is a fully-coupled FBSDE, in which $\mathcal{Y}(T)$ depends on $\mathcal{X}(T)$ and $\mathcal{X}(0)$ depends on $\mathcal{Y}(0)$. This kind of FBSDE has been studied, see e.g., \cite{LSX17, LZ2001}, etc. However, \eqref{eq27} is quite different from those of the existing works. It should be noticed that $\mathcal{A}_1+\hat\Gamma\mathcal{A}_3\neq\mathcal{A}_2+\mathcal{A}_3\hat \Gamma$ in general, which implies $\phi$ is asymmetric. It follows from the above analysis that $\mathbb{Y}$ and $\mathbb{X}$ have different dimensions, which leads to the asymmetry of $\phi$. Thus this asymmetry is brought by the characteristics of the social optimisation problem itself. Actually, it is a challenge to derive the solvability of Riccati equation \eqref{eq24}. For constant coefficient case, explicit solution may be obtained by direct calculations under additional conditions (see e.g., \cite{AFIJ, Yong06}, etc) .
\end{remark}

Now we look at a special case. We let either $\mathcal B_2=0,\ \mathcal B_3=0,$ or $\mathcal C=0,\ \mathcal D_1=0,$
which means either $B=0,\ K=0,$ or $D=0,\ H_k=0\ (k=1,\cdots,K)$.
In this case, \eqref{eq24} becomes
\begin{equation}\label{eq32}\left\{\begin{aligned}
&\dot{\phi}+\phi\big(\mathcal{A}_1+\hat\Gamma\mathcal{A}_3\big)+\big(\mathcal{A}_2+\mathcal{A}_3\hat \Gamma\big)\phi+\phi\big(\mathcal{A}_1\hat\Gamma+\hat\Gamma\mathcal{A}_3\hat\Gamma+\hat\Gamma\mathcal{A}_2+\mathcal{B}_1\big)\phi+\mathcal{A}_3=0,\\
& \phi(T)= \bar I\hat\Phi.
\end{aligned}\right.\end{equation}
\begin{proposition}\label{p433}
Let \emph{(}A1\emph{)}-\emph{(}A4\emph{)} hold. Let $(U(\cdot),V(\cdot))$ be the solution  of the ODE
\begin{equation}\nonumber\left\{\begin{aligned}
&\left(
   \begin{array}{c}
     \dot{U}(t) \\
     \dot{V}(t) \\
   \end{array}
 \right)
=\left(
   \begin{array}{cc}
     \mathcal{A}_1+\hat\Gamma\mathcal{A}_3 & \mathcal{A}_1\hat\Gamma+\hat\Gamma\mathcal{A}_3\hat\Gamma+\hat\Gamma\mathcal{A}_2+\mathcal{B}_1 \\
     -\mathcal{A}_3 & -(\mathcal{A}_2+\mathcal{A}_3\hat \Gamma) \\
   \end{array}
 \right)\left(
   \begin{array}{c}
     U(t) \\
     V(t) \\
   \end{array}
 \right),\quad  t\in[0,T],\\
&\left(
   \begin{array}{c}
     {U}(T) \\
     {V}(T) \\
   \end{array}
 \right)=\left(
   \begin{array}{c}
     I \\
     \bar I\hat\Phi \\
   \end{array}
 \right),
\end{aligned}\right.\end{equation}
and $U(\cdot)$ is nonsingular on $[0,T]$. Then $\phi(t)=V(t)U^{-1}(t)$ is the unique solution of \eqref{eq32}.
\end{proposition}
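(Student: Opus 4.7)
The plan is to verify directly that $\phi(t) := V(t)U^{-1}(t)$ satisfies \eqref{eq32} by differentiating and using the linear ODE for $(U,V)$, and then invoke standard ODE uniqueness. This is the classical Radon-style linearization of a Riccati equation, adapted here to the non-square setting (recall $\phi$ takes values in $\mathbb R^{8Kn\times 6Kn}$, so $U$ has size $6Kn\times 6Kn$ and $V$ has size $8Kn\times 6Kn$; the block coefficient matrix in the statement has been assembled precisely so that the two rows of the linear ODE respect these dimensions).

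The first step is to check the terminal condition: since $U(T)=I$ and $V(T)=\bar I\hat\Phi$, one has $\phi(T)=V(T)U^{-1}(T)=\bar I\hat\Phi$, matching \eqref{eq32}. Because $U$ is assumed nonsingular on $[0,T]$ and the coefficients are bounded, $U^{-1}$ is continuously differentiable on $[0,T]$ with $\frac{d}{dt}U^{-1}=-U^{-1}\dot U U^{-1}$, so $\phi$ is $C^1$ on $[0,T]$.

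The second step is the key computation. Differentiating $\phi=VU^{-1}$ gives
\begin{equation*}
\dot\phi=\dot VU^{-1}-VU^{-1}\dot UU^{-1}=\dot VU^{-1}-\phi\,\dot UU^{-1}.
\end{equation*}
Now substitute $\dot U=(\mathcal{A}_1+\hat\Gamma\mathcal{A}_3)U+(\mathcal{A}_1\hat\Gamma+\hat\Gamma\mathcal{A}_3\hat\Gamma+\hat\Gamma\mathcal{A}_2+\mathcal{B}_1)V$ and $\dot V=-\mathcal{A}_3U-(\mathcal{A}_2+\mathcal{A}_3\hat\Gamma)V$. Multiplying by $U^{-1}$ on the right and using $VU^{-1}=\phi$ converts the $U$-terms into constants and the $V$-terms into factors of $\phi$, yielding
\begin{equation*}
\dot\phi=-\mathcal{A}_3-(\mathcal{A}_2+\mathcal{A}_3\hat\Gamma)\phi-\phi(\mathcal{A}_1+\hat\Gamma\mathcal{A}_3)-\phi(\mathcal{A}_1\hat\Gamma+\hat\Gamma\mathcal{A}_3\hat\Gamma+\hat\Gamma\mathcal{A}_2+\mathcal{B}_1)\phi,
\end{equation*}
which is exactly \eqref{eq32} after moving all terms to one side. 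This shows existence.

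For uniqueness, the right-hand side of \eqref{eq32} is polynomial (hence locally Lipschitz) in $\phi$ with bounded continuous coefficients; by the standard ODE uniqueness theorem applied backward from $t=T$ with the prescribed terminal value, any two solutions agreeing at $T$ must coincide on their common interval of existence. Since $\phi=VU^{-1}$ is defined on the whole interval $[0,T]$ under the nonsingularity assumption, it is the unique solution. I do not expect a serious obstacle in this argument; the only point requiring slight care is the bookkeeping of the non-square dimensions to confirm that every matrix product in the linearization step is well defined, but this is built into how $\mathcal A_j$, $\mathcal B_j$, $\hat\Gamma$, and $\bar I\hat\Phi$ are constructed in \eqref{eq24}.
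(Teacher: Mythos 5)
Your proof is correct and follows essentially the same route as the paper's, which also defines $\phi=VU^{-1}$, differentiates via $\frac{d}{dt}U^{-1}=-U^{-1}\dot U U^{-1}$, and substitutes the linear ODE (citing the Radon-type lemma in Engwerda's book). Your explicit uniqueness argument via local Lipschitz continuity of the quadratic right-hand side is a small addition the paper leaves implicit, but it does not change the method.
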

\begin{proof}
We adapt the method of \cite[Theorem 5.12]{E2005}. Differentiation of the identity $U(t)U^{-1}(t)=I$ gives
$$\dot U(t)U^{-1}(t)+U(t)\frac{d}{dt}\big\{U^{-1}(t)\big\}=0,$$
which implies
$$\frac{d}{dt}\big\{U^{-1}(t)\big\}=-U^{-1}(t)\dot U(t)U^{-1}(t).$$
Define $\phi(t)=V(t)U^{-1}(t)$. Then we obtain that $\phi(\cdot)$ satisfies
$$\dot{\phi}+\phi\big(\mathcal{A}_1+\hat\Gamma\mathcal{A}_3\big)+\big(\mathcal{A}_2+\mathcal{A}_3\hat \Gamma\big)\phi+\phi\big(\mathcal{A}_1\hat\Gamma+\hat\Gamma\mathcal{A}_3\hat\Gamma+\hat\Gamma\mathcal{A}_2+\mathcal{B}_1\big)\phi+\mathcal{A}_3=0$$
with $\phi(T)=V(T)U^{-1}(T)=\bar I\hat\Phi$. Hence the conclusion.
\end{proof}

Another result on the solvability of Riccati equation \eqref{eq32} is as follows.
\begin{proposition}\label{p43}
Let \emph{(}A1\emph{)}-\emph{(}A4\emph{)} hold. For $s\in[0,T]$, let $\Psi(\cdot,s)$ be the solution of the ODE
\begin{equation}\label{eq33}\left\{\begin{aligned}
&\frac{d}{dt}\Psi(t,s)=\widehat{\mathbf{A}}(t)\Psi(t,s),\quad  t\in[s,T],\\
&\Psi(s,s)=I,
\end{aligned}\right.\end{equation}
where
\small\begin{equation}\nonumber\begin{aligned}
&\widehat{\mathbf{A}}(\cdot)=\left(
                                  \begin{smallmatrix}
                                    \mathcal{A}_1+\hat\Gamma\mathcal{A}_3+\big(\mathcal{A}_1\hat\Gamma+\hat\Gamma\mathcal{A}_3\hat\Gamma+\hat\Gamma\mathcal{A}_2+\mathcal{B}_1\big)\bar I\hat\Phi
                                    & \mathcal{A}_1\hat\Gamma+\hat\Gamma\mathcal{A}_3\hat\Gamma+\hat\Gamma\mathcal{A}_2+\mathcal{B}_1 \\
                                    \begin{smallmatrix}
                                    -\big[\bar I\hat\Phi\big(\mathcal{A}_1+\hat\Gamma\mathcal{A}_3\big)\\
                                    +\big(\mathcal{A}_2+\mathcal{A}_3\hat \Gamma\big)\bar I\hat\Phi+\bar I\hat\Phi\big(\mathcal{A}_1\hat\Gamma+\hat\Gamma\mathcal{A}_3\hat\Gamma+\hat\Gamma\mathcal{A}_2+\mathcal{B}_1\big)\bar I\hat\Phi+\mathcal{A}_3\big]
                                    \end{smallmatrix}
                                    & -\big[\mathcal{A}_2+\mathcal{A}_3\hat \Gamma+ \bar I\hat\Phi\big(\mathcal{A}_1\hat\Gamma+\hat\Gamma\mathcal{A}_3\hat\Gamma+\hat\Gamma\mathcal{A}_2+\mathcal{B}_1\big)\big] \\
                                  \end{smallmatrix}
                                \right).
\end{aligned}\end{equation}\normalsize
Suppose that
$$\left[\left(
               \begin{array}{cc}
                 0 & I \\
               \end{array}
             \right)
\Psi(T,t)\left(
                             \begin{array}{c}
                               0 \\
                               I \\
                             \end{array}
                           \right)\right]^{-1}\in L^1(0,T;\mathbb R^{8Kn\times 8Kn}).$$
Then \eqref{eq24} admits a unique solution $\phi(\cdot)$, which is given by
\begin{equation}\label{eq34}\begin{aligned}
\phi(t)=\bar I\hat\Phi-\left[\left(
               \begin{array}{cc}
                 0 & I \\
               \end{array}
             \right)\Psi(T,t)\left(
                             \begin{array}{c}
                               0 \\
                               I \\
                             \end{array}
                           \right)\right]^{-1}\left(
               \begin{array}{cc}
                 0 & I \\
               \end{array}
             \right)\Psi(T,t)\left(
                             \begin{array}{c}
                               I \\
                               0 \\
                             \end{array}
                           \right),\quad  t\in[0,T].
\end{aligned}\end{equation}
\end{proposition}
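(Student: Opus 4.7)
The plan is a Radon-type linearisation of the matrix Riccati equation \eqref{eq32}, shifted by the terminal value $\bar I\hat\Phi$ so that the companion linear ODE has clean terminal data $(I,\mathbf 0)$ and is driven precisely by the matrix $\widehat{\mathbf A}$ of \eqref{eq33}. The explicit formula \eqref{eq34} is then nothing but the block-by-block inversion of the fundamental matrix $\Psi(T,t)$ applied to this terminal data.

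First I would recall from the proof of Proposition \ref{p433} that $\phi=VU^{-1}$ solves \eqref{eq32} whenever $(U,V)$ satisfies the $2\times 2$ block linear ODE displayed there with terminal data $(U(T),V(T))=(I,\bar I\hat\Phi)$ and $U$ remains nonsingular. Perform the change of unknowns $\tilde U:=U$, $\tilde V:=V-\bar I\hat\Phi\,U$, so that $\tilde U(T)=I$ and $\tilde V(T)=\mathbf 0$. Differentiating and substituting the block ODE for $(U,V)$, and then regrouping every occurrence of $V$ as $\tilde V+\bar I\hat\Phi\,\tilde U$, one finds that $(\tilde U,\tilde V)$ satisfies the linear ODE
$$\frac{d}{dt}\binom{\tilde U(t)}{\tilde V(t)}=\widehat{\mathbf A}(t)\binom{\tilde U(t)}{\tilde V(t)},\qquad \binom{\tilde U(T)}{\tilde V(T)}=\binom{I}{\mathbf 0},$$
whose driving matrix is exactly the one in \eqref{eq33}. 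Matching the four blocks of $\widehat{\mathbf A}$ is the one piece of heavy algebra; it is bookkeeping-intensive because the shift by $\bar I\hat\Phi$ conjugates the bottom row on both sides while only right-multiplying the top row, and the asymmetry $\mathcal A_1+\hat\Gamma\mathcal A_3\neq\mathcal A_2+\mathcal A_3\hat\Gamma$ noted earlier forbids any symmetrisation shortcut.

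Second, since $\Psi(\cdot,s)$ is the fundamental matrix of this linear ODE, the terminal condition yields $\Psi(T,t)\binom{\tilde U(t)}{\tilde V(t)}=\binom{I}{\mathbf 0}$. Reading off the bottom-block identity gives $\Psi_{21}(T,t)\tilde U(t)+\Psi_{22}(T,t)\tilde V(t)=\mathbf 0$, where $\Psi_{21}(T,t)=\left(\begin{smallmatrix}0 & I\end{smallmatrix}\right)\Psi(T,t)\left(\begin{smallmatrix}I \\ 0\end{smallmatrix}\right)$ and $\Psi_{22}(T,t)=\left(\begin{smallmatrix}0 & I\end{smallmatrix}\right)\Psi(T,t)\left(\begin{smallmatrix}0 \\ I\end{smallmatrix}\right)$ are precisely the two blocks appearing in \eqref{eq34}. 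The hypothesis that $\Psi_{22}(T,\cdot)^{-1}$ exists and lies in $L^1(0,T;\mathbb R^{8Kn\times 8Kn})$ allows one to solve for $\tilde V(t)\tilde U(t)^{-1}=-\Psi_{22}(T,t)^{-1}\Psi_{21}(T,t)$, so that $\phi(t)=\bar I\hat\Phi+\tilde V(t)\tilde U(t)^{-1}$ reproduces formula \eqref{eq34}. The terminal condition $\phi(T)=\bar I\hat\Phi$ is immediate from $\Psi(T,T)=I$, and the $L^1$-bound secures enough regularity in $t$ for \eqref{eq32} to hold a.e.

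For uniqueness I would take any classical solution $\phi$ of \eqref{eq32}, let $U$ be the fundamental solution of the linear ODE with coefficient $\mathcal A_1+\hat\Gamma\mathcal A_3+(\mathcal A_1\hat\Gamma+\hat\Gamma\mathcal A_3\hat\Gamma+\hat\Gamma\mathcal A_2+\mathcal B_1)\phi(\cdot)$ ending at $U(T)=I$, and set $V:=\phi U$. A direct differentiation check shows $(U,V)$ satisfies the block ODE of Proposition \ref{p433}, so the same shift $\tilde V=V-\bar I\hat\Phi U$ reduces everything to the same linear ODE with the same terminal data $(I,\mathbf 0)$; uniqueness for linear ODEs together with step two forces $\phi$ to coincide with \eqref{eq34}. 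The principal obstacle throughout is the algebraic verification in step one, where several nested products of $\bar I\hat\Phi$, $\hat\Gamma$ and the block matrices $\mathcal A_j,\mathcal B_j$ must be unfolded carefully against the explicit entries of $\widehat{\mathbf A}$ in \eqref{eq33}.
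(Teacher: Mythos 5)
Your proposal is correct and follows essentially the same route as the paper: both hinge on shifting by the terminal value $\bar I\hat\Phi$ so that the problem reduces to a Riccati equation with zero terminal data whose companion linear system is driven exactly by $\widehat{\mathbf A}$, after which the formula \eqref{eq34} is read off from the blocks of $\Psi(T,t)$. The only difference is presentational: the paper performs the shift at the level of $\Pi=\phi-\bar I\hat\Phi$ and then cites \cite[Theorem 5.3]{Yong06} for the representation, whereas you carry out the shift on the Radon pair $(U,V)$ of Proposition \ref{p433} and derive the same representation (and uniqueness) directly.
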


\begin{proof}
Define $\Pi(t)=\phi(t)-\bar I\hat\Phi , \  t\in [0,T].$
$\phi(T)=\bar I\hat\Phi $ implies $\Pi(T)= 0$. By $\phi(t)=\Pi(t)+\bar I\hat\Phi$, we obtain
\begin{equation}\nonumber\left\{\begin{aligned}
\dot{\Pi}&+\Pi\Big(\mathcal{A}_1+\hat\Gamma\mathcal{A}_3+\big(\mathcal{A}_1\hat\Gamma+\hat\Gamma\mathcal{A}_3\hat\Gamma+\hat\Gamma\mathcal{A}_2+\mathcal{B}_1\big)\bar I\hat\Phi\Big) +\Big(\mathcal{A}_2+\mathcal{A}_3\hat \Gamma\\
&+\bar I\hat\Phi\big(\mathcal{A}_1\hat\Gamma+\hat\Gamma\mathcal{A}_3\hat\Gamma+\hat\Gamma\mathcal{A}_2+\mathcal{B}_1\big)\Big)\Pi+\Pi\Big(\mathcal{A}_1\hat\Gamma+\hat\Gamma\mathcal{A}_3\hat\Gamma+\hat\Gamma\mathcal{A}_2+\mathcal{B}_1 \Big)\Pi\\
&+\bar I\hat\Phi\big(\mathcal{A}_1+\hat\Gamma\mathcal{A}_3\big)+\big(\mathcal{A}_2+\mathcal{A}_3\hat \Gamma\big)\bar I\hat\Phi+\bar I\hat\Phi\big(\mathcal{A}_1\hat\Gamma+\hat\Gamma\mathcal{A}_3\hat\Gamma+\hat\Gamma\mathcal{A}_2+\mathcal{B}_1\big)\bar I\hat\Phi\\
&+\mathcal{A}_3=0,\qquad \Pi(T)=0.
\end{aligned}\right.\end{equation}
According to \cite[Theorem 5.3]{Yong06}, we have
\begin{equation}\nonumber\begin{aligned}
\Pi(t)=-\left[\left(
               \begin{array}{cc}
                 0 & I \\
               \end{array}
             \right)\Psi(T,t)\left(
                             \begin{array}{c}
                               0 \\
                               I \\
                             \end{array}
                           \right)\right]^{-1}\left(
               \begin{array}{cc}
                 0 & I \\
               \end{array}
             \right)\Psi(T,t)\left(
                             \begin{array}{c}
                               I \\
                               0 \\
                             \end{array}
                           \right),\qquad  t\in[0,T].
\end{aligned}\end{equation}
Then we get \eqref{eq34}.
\end{proof}

The following proposition further discusses the explicit solutions of $\phi(\cdot)$.
\begin{proposition}
Assume that $\widehat{\mathbf{A}}(\cdot)$ is a constant-valued matrix and denoted by $\widehat{\mathbf{A}}(t)\equiv\Lambda$. Suppose that
\begin{equation}\nonumber\begin{aligned}
\det\left\{\left(
               \begin{array}{cc}
                 0 & I \\
               \end{array}
             \right)e^{\Lambda t}\left(
                             \begin{array}{c}
                               0 \\
                               I \\
                             \end{array}
                           \right)
\right\}>0,\quad \forall\ t\in[0,T]
\end{aligned}\end{equation}
holds. Then \eqref{eq24} admits a unique solution $\phi(\cdot)$ as
\begin{equation}\label{eq35}\begin{aligned}
\phi(t)=\bar I\hat\Phi-\left[\left(
               \begin{array}{cc}
                 0 & I \\
               \end{array}
             \right)e^{\Lambda (T-t)}\left(
                             \begin{array}{c}
                               0 \\
                               I \\
                             \end{array}
                           \right)\right]^{-1}\left(\begin{array}{cc}
                 0 & I \\
               \end{array}\right)e^{\Lambda (T-t)}\left(
                             \begin{array}{c}
                               I \\
                               0 \\
                             \end{array}
                           \right),\qquad  t\in[0,T].
\end{aligned}\end{equation}
\end{proposition}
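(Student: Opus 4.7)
The plan is to deduce this proposition as a direct specialization of Proposition \ref{p43} to the constant-coefficient case. When $\widehat{\mathbf{A}}(\cdot) \equiv \Lambda$, the ODE \eqref{eq33} governing the fundamental matrix $\Psi(\cdot,s)$ becomes a linear ODE with constant coefficients, whose unique solution is the matrix exponential
\[
\Psi(t,s) = e^{\Lambda(t-s)}, \qquad t \in [s,T].
\]
In particular, $\Psi(T,t) = e^{\Lambda(T-t)}$. Substituting this into the formula \eqref{eq34} immediately yields the candidate expression \eqref{eq35}.

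Before invoking Proposition \ref{p43}, I would verify its hypothesis, namely that the block
\[
M(t) := \begin{pmatrix} 0 & I \end{pmatrix} e^{\Lambda(T-t)} \begin{pmatrix} 0 \\ I \end{pmatrix}
\]
is invertible for every $t \in [0,T]$ with $M(\cdot)^{-1} \in L^1(0,T;\mathbb{R}^{8Kn \times 8Kn})$. The standing assumption $\det M(t) > 0$ for all $t \in [0,T]$ gives pointwise invertibility. Since $t \mapsto e^{\Lambda(T-t)}$ is continuous on the compact interval $[0,T]$, so is $\det M(\cdot)$, and by the stated strict positivity combined with continuity on a compact set, $\det M(\cdot)$ is bounded away from zero. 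Hence $M(\cdot)^{-1}$ is continuous and therefore bounded on $[0,T]$, which gives $L^1$-integrability (indeed $L^\infty$-integrability).

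With the hypothesis of Proposition \ref{p43} verified, the existence, uniqueness, and explicit representation \eqref{eq34} of $\phi(\cdot)$ apply verbatim. Plugging $\Psi(T,t) = e^{\Lambda(T-t)}$ into \eqref{eq34} produces exactly \eqref{eq35}, completing the argument. I do not anticipate any genuine obstacle here: the only mildly delicate step is the passage from pointwise positivity of the determinant to $L^1$-integrability of the inverse, which is immediate from compactness and continuity, so the proof should be quite short.
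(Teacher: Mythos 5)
Your proposal is correct and follows essentially the same route as the paper: the paper's proof is a one-line appeal to the argument of Proposition \ref{p43} (citing the constant-coefficient representation theorem of Ma--Yong), which amounts to substituting $\Psi(T,t)=e^{\Lambda(T-t)}$ into \eqref{eq34}. Your additional check that the strict positivity of the determinant on the compact interval $[0,T]$ yields boundedness of the inverse, and hence the $L^1$ hypothesis of Proposition \ref{p43}, is a correct and worthwhile detail that the paper leaves implicit.
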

\begin{proof}
Similar to the proof of Proposition \ref{p43}, \eqref{eq35} is obtained by \cite[Theorem 4.3]{MY1999}.
\end{proof}

\section{Stochastic optimal control problem for $\mathcal A_i,\ 1\leq i\leq N$}\label{auxiliary problem}
\setcounter{equation}{0}
\renewcommand{\theequation}{\thesection.\arabic{equation}}

Now, we make some person-by-person analysis, introduce an optimal control problem, solve it and get the decentralized control in this section.

\subsection{Forward-backward person-by-person optimality}\label{p-b-p optimality}

It is well-known that by freezing the state-average term we can always derive an auxiliary control problem in MF game scheme. Actually, due to the difference of cost functional, MF social optima scheme and MF game scheme are rather different. In MF social optima scheme the person-by-person optimality is regarded as an effective method to derive the auxiliary control problem, see e.g. Section \ref{decentralized strategy} below or \cite{HWY2021}. In this section, we will apply variation method to analyze the MF approximation by virtue of person-by-person optimality principle. Due to forward-backward structure, we call it \emph{forward-backward person-by-person optimality}.

Let $\{\bar u_i,\bar u_{-i}\in \mathcal U_i^c\}_{i=1}^N$ denote all centralized optimal strategies. Take the perturbation into account that $\mathcal A_i$ uses $u_i\in \mathcal U_i^c$, $1\leq i\leq N$, while the other agents use $\bar u_{-i}=(\bar u_1,\cdots,\bar u_{i-1},\bar u_{i+1},\cdots,$ $\bar u_N)$. States satisfying \eqref{eq1}-\eqref{eq2} associated with $(u_i,\bar u_{-i})$ and $(\bar u_i,\bar u_{-i})$ are denoted by $(X_i,Y_i,Z_{i\cdot})$
 and $(\bar X_i,\bar Y_i,\bar Z_{i\cdot})$, respectively, $i=1,2,\cdots, N$.
 For $1\leq j\leq N$, define
$$\delta u_j=u_j-\bar u_j,\quad  \delta X_j=X_j-\bar X_j,\quad  \delta Y_j=Y_j-\bar Y_j,\quad  \delta Z_{j\cdot}=Z_{j\cdot}-\bar Z_{j\cdot}.$$
Therefore, variation of the dynamic for $\mathcal A_i$, $1\leq i\leq N$ is
\begin{equation}\label{eq6}\left\{\begin{aligned}
&d\delta X_i=\Big[A_{\theta_i}\delta X_i+B\delta u_i+F\delta X^{(N)}\Big]dt+D\delta u_idW_i(t),\\
&d\delta Y_i=-\Big[H_{\theta_i}\delta Y_i+K\delta u_i+L\delta X_i+M\delta X^{(N)}\Big]dt+\delta Z_{i\cdot}(t)dW(t),\\
&\delta X_i(0)=0,\quad\delta Y_i(T)=\Phi\delta X_i(T),
\end{aligned}\right.\end{equation}
and for $\mathcal A_j$, $j\neq i$,
\begin{equation}\label{eq9}\left\{\begin{aligned}
&d\delta X_j=\Big[A_{\theta_j}\delta X_j+F\delta X^{(N)}\Big]dt,\\
&d\delta Y_j=-\Big[H_{\theta_j}\delta Y_j+L\delta X_j+M\delta X^{(N)}\Big]dt+\delta Z_{j\cdot}(t)dW(t),\\
&\delta X_j(0)=0,\quad\delta Y_j(T)=\Phi\delta X_j(T).
\end{aligned}\right.\end{equation}
For $1\leq k\leq K$, we define $\delta X_{(k)}=\sum_{j\in\mathcal I_k,j\neq i}\delta X_j$ and $\delta Y_{(k)}=\sum_{j\in\mathcal I_k,j\neq i}\delta Y_j$. We then have
\begin{equation*}\begin{aligned}
&d\delta X_{(k)}=\Big[A_{k}\delta X_{(k)}+\left(N_k-I_{\mathcal I_k}(i)\right)F\delta X^{(N)}\Big]dt,\ \ \delta X_{(k)}(0)=0,
\end{aligned}\end{equation*} and
\begin{equation*}\begin{aligned}
&d\delta Y_{(k)}=-\Big[H_{k}\delta Y_{(k)}+L\delta X_{(k)}+\left(N_k-I_{\mathcal I_k}(i)\right)M\delta X^{(N)}\Big]dt+\sum_{j\in\mathcal I_k,j\neq i}\delta Z_{j\cdot}dW(t),\\ &\delta Y_{(k)}(T)=\Phi\delta X_{(k)}(T).
\end{aligned}\end{equation*}
Here, $I_{\mathcal I_k}(\cdot)$ denotes the indicative function. Define $\Delta\mathcal J_j=\mathcal J_j(u_j,\bar u_{-j})-\mathcal J_j(\bar u_j,\bar u_{-j})$. By elementary  calculations, we further derive the variation of cost functional for $\mathcal A_i$ as
\begin{equation*}\begin{aligned}
\Delta \mathcal J_{i}=\ &\mathbb E\Bigg\{\int_0^T\left[\left\langle Q\left(\bar X_i-S \bar X^{(N)}\right),\delta X_i-S \delta X^{(N)}\right\rangle+\left\langle R_{\theta_i}\bar u_i,\delta u_i\right\rangle\right]dt\\
&\qquad+\big\langle \Gamma\bar Y_i(0),\delta Y_i(0)\big\rangle\Bigg\}.
\end{aligned}\end{equation*}
For $j\neq i$, variation of cost functional for $\mathcal A_j$ is
\begin{equation*}\begin{aligned}
\Delta \mathcal J_{j}=\ &\mathbb E\Bigg\{\int_0^T \left[\left\langle Q\left(\bar X_j-S\bar X^{(N)}\right), \delta X_j-S\delta X^{(N)}\right\rangle \right]dt+\left\langle \Gamma\bar Y_j(0), \delta Y_j(0)\right\rangle\Bigg\}.
\end{aligned}\end{equation*}
Thus it follows
\begin{equation}\label{eq10}\begin{aligned}
\Delta \mathcal J_{soc}^{(N)}=\ &\mathbb E\Bigg\{\int_0^T\Bigg[\sum_{j=1}^N\left\langle Q\left(\bar X_j-S\bar X^{(N)}\right), \delta X_j-S\delta X^{(N)}\right\rangle+\left\langle R_{\theta_i}\bar u_i,\delta u_i\right\rangle\Bigg]dt\\
&\qquad\qquad+\sum_{j=1}^N\left\langle \Gamma\bar Y_j(0), \delta Y_j(0)\right\rangle\Bigg\}.
\end{aligned}\end{equation}
In the following, based on \eqref{eq10} we will obtain another representation of $\Delta \mathcal J_{soc}^{(N)}$ which is affected by $\delta X_i,\ \delta u_i,\ \delta Y_i$ and some error terms. We will further derive the decentralized auxiliary cost functional.
\begin{proposition}\label{prop0914}
The variation of $\mathcal J_{soc}^{(N)}$ has the following form
\begin{equation}\label{eq16}\begin{aligned}
&\Delta \mathcal J_{soc}^{(N)}\\
  =\ &\mathbb E\Bigg\{\int_0^T\Bigg[\langle Q\bar X_i,\delta X_i\rangle-\langle (QS+ S^\top Q -S^\top QS)\widehat X,\delta X_i\rangle+\sum_{k=1}^K\langle \pi_kF^\top Y_2^k,\delta X_i\rangle
\\
&+\sum_{k=1}^K\langle \pi_k(F^\top \mathbb E\mathbf{Y}_k-M^\top \mathbf{X}_k),\delta X_i\rangle+\langle R_{\theta_i}\bar u_i,\delta u_i\rangle\Bigg]dt +\langle \Gamma\bar Y_i(0),\delta Y_i(0)\rangle\Bigg\}+\sum_{l=1}^9\varepsilon_l,
\end{aligned}\end{equation}
where $(\mathbf X_k,\mathbf Y_k)$ stands for a type-$k$ representative of $(X_1^j,Y_1^j)$ when only the distribution is concerned. $\widehat X,\ X^{**}_k,\ X_j^*,\ Y_j^*,\ Z_{j\cdot}^*$ are the approximations of  $\bar X^{(N)},\ \delta X_{(k)},\ N_k\delta X_j,\ N_k\delta Y_j,$\\ $N_k\delta Z_{j\cdot}$, respectively. For $j\in\mathcal I_k,\ j\neq i$,
\begin{equation}\label{eq17}\left\{\begin{aligned}
&dX_1^j=H_k^\top X_1^j dt,\\
&dY_1^j=\Big[-A_k^\top Y_1^j+L^\top X_1^j-Q\bar X_j\Big] dt+Z_1^{j\cdot}dW(t),\\
&dY_2^k=\bigg[-A_k^\top Y_2^k+(QS+ S^\top Q-S^\top QS)\widehat X-\sum_{l=1}^K\pi_l(F^\top \mathbb E\mathbf{Y}_l-M^\top \mathbf{X}_l)\\
&\qquad\qquad-\sum_{l=1}^K\pi_lF^\top Y_2^l\bigg]dt,\\
&X_1^j(0)=-\Gamma\bar Y_j(0),\quad Y_1^j(T)=-\Phi^\top X_1^j(T),\quad Y_2^k(T)=0,\ \ 1\leq k\leq K,
\end{aligned}\right.\end{equation}
and
\begin{equation*}\left\{\begin{aligned}
&\varepsilon_1=\mathbb E\int_0^T\left\langle \left(QS+ S^\top Q- S^\top Q S\right)\left(\widehat X-\bar X^{(N)}\right),N\delta X^{(N)}\right\rangle dt,\\
&\varepsilon_2=\sum_{k=1}^K\mathbb E\int_0^T\left\langle \left(QS+ S^\top Q-S^\top QS\right)\widehat X,X_k^{**}-\delta X_{(k)}\right\rangle dt,\\
&\varepsilon_3=\sum_{k=1}^K\mathbb E\int_0^T\frac{1}{N_k}\sum_{j\in\mathcal I_k,j\neq i}\left\langle Q\bar X_j,N_k\delta X_j-X^*_j\right\rangle dt,\\
&\varepsilon_4=\sum_{k=1}^K\frac{1}{N_k}\sum_{j\in\mathcal I_k,j\neq i}\left\langle \Gamma\bar Y_j(0),N_k\delta Y_j(0)-Y^*_j(0)\right\rangle,\\
&\varepsilon_{5}=\sum_{k=1}^K\mathbb E\int_0^T\left\langle \pi_kM^\top \mathbf X_k-\frac{1}{N_k}\sum_{j\in\mathcal I_k,j\neq i}\pi_kM^\top X_1^j,\delta X_i\right\rangle dt,\\
&\varepsilon_{6}=\sum_{k=1}^K\mathbb E\int_0^T\left\langle -\pi_kF^\top \mathbb E\mathbf Y_k+\frac{1}{N_k}\sum_{j\in\mathcal I_k,j\neq i}\pi_kF^\top Y_1^j,\delta X_i\right\rangle dt,\\
&\varepsilon_{7}=\sum_{k=1}^K\mathbb E\int_0^T\left\langle \sum_{l=1}^K\pi_lM^\top\mathbf{X}_l-\sum_{l=1}^K\frac{\pi_l}{N_l}\sum_{j\in\mathcal I_l,j\neq i}M^\top X_1^j,X_k^{**}\right\rangle dt,\\
&\varepsilon_{8}=\sum_{k=1}^K\mathbb E\int_0^T\left\langle -\sum_{l=1}^K\pi_lF^\top\mathbb E\mathbf{Y}_l+\sum_{l=1}^K\frac{\pi_l}{N_l}\sum_{j\in\mathcal I_l,j\neq i}F^\top Y_1^j,X_k^{**}\right\rangle dt,\\
&\varepsilon_{9}=-\sum_{k=1}^K\mathbb E\int_0^T\frac{I_{\mathcal I_k}(i)}{N_k}\left\langle \pi_kF^\top Y_2^k,\delta X_i\right\rangle dt.
\end{aligned}\right.\end{equation*}
\end{proposition}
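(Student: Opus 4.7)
The strategy is to start from \eqref{eq10} and peel off each discrepancy as an $\varepsilon_\ell$ via algebraic reduction, two duality transforms, and mean-field / law-of-large-numbers limits. First, expand $\langle Q(\bar X_j - S\bar X^{(N)}),\delta X_j - S\delta X^{(N)}\rangle$ into four inner products and use $\sum_j \bar X_j = N\bar X^{(N)}$ and $\sum_j \delta X_j = N\delta X^{(N)}$ to collapse the three cross-products into $-N\langle(QS+S^\top Q-S^\top QS)\bar X^{(N)},\delta X^{(N)}\rangle$, leaving the diagonal $\sum_j\langle Q\bar X_j,\delta X_j\rangle$. Swapping the mean-field surrogate $\widehat X$ in for $\bar X^{(N)}$ extracts $\varepsilon_1$; decomposing $N\delta X^{(N)}=\delta X_i+\sum_k\delta X_{(k)}$ and substituting $\delta X_{(k)}\to X_k^{**}$ extracts $\varepsilon_2$. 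This accounts for the $-\langle(QS+S^\top Q-S^\top QS)\widehat X,\delta X_i\rangle$ contribution in \eqref{eq16} and leaves a residual $-\sum_k\langle(QS+S^\top Q-S^\top QS)\widehat X,X_k^{**}\rangle$ to be absorbed later.

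Next, isolate the $j=i$ contributions $\langle Q\bar X_i,\delta X_i\rangle$ and $\langle\Gamma\bar Y_i(0),\delta Y_i(0)\rangle$, which appear directly in \eqref{eq16}. For the remaining $j\neq i$ portion of $\sum_j\langle Q\bar X_j,\delta X_j\rangle+\sum_j\langle\Gamma\bar Y_j(0),\delta Y_j(0)\rangle$, I would apply the first duality: take It\^o's formula for $\langle X_1^j,\delta Y_j\rangle+\langle Y_1^j,\delta X_j\rangle$ using $(X_1^j,Y_1^j)$ from \eqref{eq17} and $(\delta X_j,\delta Y_j)$ from \eqref{eq9}. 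The generators in \eqref{eq17} are designed so that the $A_k$ and $L$ cross-terms cancel, while the paired boundary conditions $X_1^j(0)=-\Gamma\bar Y_j(0)$, $Y_1^j(T)=-\Phi^\top X_1^j(T)$, $\delta X_j(0)=0$, $\delta Y_j(T)=\Phi\delta X_j(T)$ annihilate the terminal cross-term and convert the initial one into $\langle\Gamma\bar Y_j(0),\delta Y_j(0)\rangle$. This yields
\begin{equation*}
\mathbb E\int_0^T\langle Q\bar X_j,\delta X_j\rangle dt+\mathbb E\langle\Gamma\bar Y_j(0),\delta Y_j(0)\rangle=\mathbb E\int_0^T\langle -M^\top X_1^j+F^\top Y_1^j,\delta X^{(N)}\rangle dt.
\end{equation*}
Summing over $j\neq i$, splitting $N\delta X^{(N)}$ as before, and replacing the type-$k$ empirical averages $\frac{1}{N_k}\sum_{j\in\mathcal I_k,j\neq i}X_1^j$ and $\frac{1}{N_k}\sum_{j\in\mathcal I_k,j\neq i}Y_1^j$ by $\mathbf X_k$ and $\mathbb E\mathbf Y_k$ (with $\pi_k^{(N)}\to\pi_k$) produces $\varepsilon_5,\varepsilon_6$ (paired with $\delta X_i$) and $\varepsilon_7,\varepsilon_8$ (paired with $X_k^{**}$), together with the self-exclusion correction $\varepsilon_9$. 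The per-agent rescalings $N_k\delta X_j\to X_j^*$ and $N_k\delta Y_j(0)\to Y_j^*(0)$ supply $\varepsilon_3$ and $\varepsilon_4$.

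The second duality then eliminates the leftover integrals against $X_k^{**}$. Apply It\^o to $\langle Y_2^k,X_k^{**}\rangle$: the terminal $Y_2^k(T)=0$ and initial $X_k^{**}(0)=0$ kill the boundary terms, and the $-A_k^\top Y_2^k$ drift cancels the $A_k X_k^{**}$ drift. The source terms $(QS+S^\top Q-S^\top QS)\widehat X$ and $-\sum_l\pi_l(F^\top\mathbb E\mathbf Y_l-M^\top\mathbf X_l)$ in the $Y_2^k$-equation of \eqref{eq17} are calibrated so that their integrals against $X_k^{**}$ cancel precisely the residuals left by the first two stages. The self-loop $-\sum_l\pi_l F^\top Y_2^l$ in the drift of $Y_2^k$ arises because the mean-field driver of $X_k^{**}$ carries $F\cdot\delta X^{(N)}\approx F\cdot(\delta X_i+\sum_l X_l^{**})/N$ multiplied by $N_k$, so the aggregate $\sum_l X_l^{**}$ must be reabsorbed at the adjoint level; once absorbed, what survives is the $\delta X_i$-piece, contributing $\sum_k\pi_k\langle F^\top Y_2^k,\delta X_i\rangle$ to \eqref{eq16}.

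The main obstacle will be the combinatorial bookkeeping across the nine errors, which arise from four distinct approximation layers (MF surrogate for $\bar X^{(N)}$; aggregate surrogate $X_k^{**}$ for $\delta X_{(k)}$; per-agent rescalings $X_j^*,Y_j^*,Z_{j\cdot}^*$; empirical-to-limit substitutions for $X_1^j,Y_1^j$) and must dovetail with two coupled adjoint systems. The delicate point is verifying that the fixed-point structure hidden in the $Y_2^k$-equation, namely the $\sum_l\pi_l F^\top Y_2^l$ self-loop induced by cross-type coupling through the common mean field, exactly absorbs the residual $X_k^{**}$-integrands, so that no uncontrolled remainder escapes beyond $\varepsilon_1,\ldots,\varepsilon_9$.
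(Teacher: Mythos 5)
Your overall architecture coincides with the paper's three-step proof: expand the quadratic form in \eqref{eq10} and swap in the mean-field surrogate $\widehat X$ (giving $\varepsilon_1$), introduce the aggregate and per-agent surrogates (giving $\varepsilon_2$--$\varepsilon_4$), then eliminate the remaining state dependence by two duality computations whose empirical-to-representative substitutions generate $\varepsilon_5$--$\varepsilon_9$. The adjoint generators, the cancellation mechanism, and the role of the $-\sum_l\pi_lF^\top Y_2^l$ self-loop in the $Y_2^k$-equation are all correctly identified.

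The one place where your route departs from the paper, and where it becomes internally inconsistent, is the first duality. You pair $(X_1^j,Y_1^j)$ with the true perturbations $(\delta X_j,\delta Y_j)$ of \eqref{eq9}. That computation is valid and yields the exact identity you display, but precisely because it is exact it leaves no room for $\varepsilon_3$ and $\varepsilon_4$: if $X_j^*$ and $Y_j^*$ never enter the duality, the rescalings $N_k\delta X_j\to X_j^*$, $N_k\delta Y_j(0)\to Y_j^*(0)$ play no role and cannot ``supply'' those two error terms. Moreover, your identity produces $\langle F^\top Y_1^j-M^\top X_1^j,\delta X^{(N)}\rangle$, so after splitting $N\delta X^{(N)}=\delta X_i+\sum_k\delta X_{(k)}$ the empirical-to-representative errors come paired with $\delta X_{(k)}$ rather than with $X_k^{**}$ as in $\varepsilon_7,\varepsilon_8$, and an extra error of the form $\langle\sum_l\pi_l(F^\top\mathbb E\mathbf Y_l-M^\top\mathbf X_l),X_k^{**}-\delta X_{(k)}\rangle$ (absent from the stated list) would be needed before the $\langle Y_2^k,X_k^{**}\rangle$ duality can absorb the residue. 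The paper avoids this by doing the approximation first and the duality second: it replaces $\delta X_{(k)}$, $N_k\delta X_j$, $(N_k\delta Y_j,N_k\delta Z_{j\cdot})$ by $X_k^{**}$, $X_j^*$, $(Y_j^*,Z_{j\cdot}^*)$ solving \eqref{eq11} (this is where $\varepsilon_2,\varepsilon_3,\varepsilon_4$ arise), and only then applies It\^o's formula to $\langle X_1^j,Y_j^*\rangle$, $\langle Y_1^j,X_j^*\rangle$ and $\langle Y_2^k,X_k^{**}\rangle$, whose drifts already carry the limiting weights $\pi_k$. To land on exactly the nine $\varepsilon_l$ of the statement you must adopt that order; otherwise you obtain an equivalent but differently organized decomposition and still owe a verification that the two error budgets agree.
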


\begin{proof}
For the proof we may divide it into three steps.

Step I:
Replacing $\bar X^{(N)}$ in \eqref{eq10} by MF term $\widehat X$ which will be determined later. Specifically,
\small \begin{equation*}\begin{aligned}
\Delta \mathcal J_{soc}^{(N)} = \ &\mathbb E\Bigg\{\int_0^T\Bigg[\left\langle Q\left(\bar X_i-S \bar X^{(N)}\right),\delta X_i\right\rangle-\left\langle Q\left(\bar X_i-S \bar X^{(N)}\right),S \delta X^{(N)}\right\rangle\\
 &-\sum_{j=1,j\neq i}^N\left\langle QS\bar X^{(N)}, \delta X_j\right\rangle+\sum_{j=1,j\neq i}^N\left\langle Q\bar X_j, \delta X_j\right\rangle\\
 &-\sum_{j=1,j\neq i}^N\left\langle Q\left(\bar X_j-S\bar X^{(N)}\right), S\delta X^{(N)}\right\rangle+\left\langle R_{\theta_i}\bar u_i,\delta u_i\right\rangle \Bigg]dt+\sum_{j=1}^N\left\langle \Gamma\bar Y_j(0), \delta Y_j(0)\right\rangle\Bigg\}\\
 =\ &\mathbb E\Bigg\{\int_0^T\Bigg[\left\langle Q\bar X_i,\delta X_i\right\rangle-\left\langle \left(QS+ S^\top Q-S^\top QS\right)\widehat X,\delta X_i\right\rangle\\
 &
-\sum_{k=1}^K\left\langle \left(QS+ S^\top Q -S^\top QS\right)\widehat X,\delta X_{(k)}\right\rangle+ \sum_{k=1}^K\frac{1}{N_k}\sum_{j\in\mathcal I_k,j\neq i}\left\langle Q \bar X_j,N_k\delta X_{j}\right\rangle\\
&+\langle R_{\theta_i}\bar u_i,\delta u_i\rangle \Bigg]dt+\langle \Gamma\bar Y_i(0),\delta Y_i(0)\rangle+ \sum_{k=1}^K\frac{1}{N_k}\sum_{j\in\mathcal I_k,j\neq i}\langle \Gamma \bar Y_j(0),N_k\delta Y_{j}(0)\rangle\Bigg\}+\varepsilon_1.
\end{aligned}\end{equation*}\normalsize

Step II: For $1\leq k\leq K$, introduce $X^{**}_k$ to replace $\delta X_{(k)}$. For $j\in\mathcal I_k,\ j\neq i$, introduce $X_j^*$ to replace $N_k\delta X_j$ and $(Y_j^*,Z_j^*)$ to replace $(N_k\delta Y_j, N_k\delta Z_j)$, where
\begin{equation}\label{eq11}\left\{\begin{aligned}
&dX_k^{**}=\Bigg[A_kX_k^{**}+F\pi_k\delta X_i+F\pi_k\sum_{l=1}^KX_l^{**}\Bigg]dt,\\
&dX_j^*=\Bigg[A_{k}X_j^*+F\pi_k\delta X_i+F\pi_k\sum_{l=1}^KX_l^{**}\Bigg]dt,\\
&dY_j^*=-\Bigg[H_{k}Y_j^*+LX_j^*+M\pi_k\delta X_i+M\pi_k\sum_{l=1}^KX_l^{**}\Bigg]dt+Z_{j\cdot}^*dW(t),\\
&X_k^{**}(0)=0,\quad X_j^*(0)=0,\quad Y_j^*(T)=\Phi X_j^*(T).
\end{aligned}\right.\end{equation}
Therefore,
\begin{equation}\label{eq12}\begin{aligned}
&\Delta \mathcal J_{soc}^{(N)}\\
  =\ &\mathbb E\Bigg\{\int_0^T\Bigg[\langle Q\bar X_i,\delta X_i\rangle-\langle (QS+ S^\top Q -S^\top QS)\widehat X,\delta X_i\rangle\\
  &
-\sum_{k=1}^K\langle (QS+ S^\top Q-S^\top QS)\widehat X,X^{**}_k\rangle+ \sum_{k=1}^K\frac{1}{N_k}\sum_{j\in\mathcal I_k,j\neq i}\langle Q \bar X_j, X_j^* \rangle\\
&+\langle R_{\theta_i}\bar u_i,\delta u_i\rangle \Bigg]dt+\langle \Gamma\bar Y_i(0),\delta Y_i(0)\rangle+ \sum_{k=1}^K\frac{1}{N_k}\sum_{j\in\mathcal I_k,j\neq i}\langle \Gamma \bar Y_j(0),Y^*_{j}(0)\rangle\Bigg\}+\sum_{l=1}^4\varepsilon_l.
\end{aligned}\end{equation}

Step III: Substitute $X_j^*,\ Y_j^*$ and $X_k^{**}$ by dual method. Introduce the adjoint processes $(X_1^j,Y_1^j,Z_1^{j\cdot})$ and $Y_2^k$ of the terms $(Y_j^*,Z_{j\cdot}^*,X_j^*)$ and $X_k^{**}$, respectively, which are assumed to satisfy
\begin{equation}\nonumber\left\{\begin{aligned}
&dX_1^j=\alpha_1 dt,\qquad X_1^j(0)=-\Gamma\bar Y_j(0),\\
&dY_1^j=\alpha_2 dt+Z_1^{j\cdot}dW(t),\quad Y_1^j(T)=-\Phi^\top X_1^j(T),\quad j\in\mathcal I_k,\ j\neq i,\\
&dY_2^k=\alpha_3dt+Z_2^kdW_k,\qquad Y_2^k(T)=0,\quad 1\leq k\leq K,
\end{aligned}\right.\end{equation}
where $\al_1,\ \al_2,\ \al_3$ will be determined later.
Applying It\^{o}'s formula to $\langle X_1^j,Y_j^*\rangle$, we have
\begin{equation*}\begin{aligned}
d\langle X_1^j,Y_j^*\rangle=&\left[\left\langle X_1^j,-\left(H_{k}Y_j^*+LX_j^*+M\pi_k\delta X_i+M\pi_k\sum_{l=1}^KX_l^{**}\right)\right\rangle+\left\langle \alpha_1,Y_j^*\right\rangle\right]dt\\
&+\sum_{j=1}^N(\cdots)dW_j(t).
\end{aligned}\end{equation*}
For $j\in\mathcal I_k,\ j\neq i$, integrating from $0$ to $T$ and taking expectation, we obtain
\begin{equation}\label{eq13}\begin{aligned}
&\mathbb E\left\langle X_1^j(T),\Phi X^*_{j}(T)\right\rangle+\mathbb E\left\langle \Gamma \bar Y_j(0),Y^*_{j}(0)\right\rangle
=\ \mathbb E\left\langle X_1^j(T),Y_j^*(T)\right\rangle-\mathbb E\left\langle X_1^j(0),Y_j^*(0)\right\rangle\\
=\ &\mathbb E\int_0^T\bigg[\left\langle\alpha_1-H_k^\top X_1^j,Y_j^*\right\rangle-\left\langle L^\top X_1^j,X_j^*\right\rangle-\sum_{l=1}^K\left\langle \pi_kM^\top X_1^j,X_l^{**}\right\rangle\\
&\qquad\qquad-\left\langle \pi_kM^\top X_1^j,\delta X_i\right\rangle\bigg]dt.
\end{aligned}\end{equation}
Similarly, we derive
\begin{equation}\label{eq14}\begin{aligned}
&-\mathbb E\left\langle \Phi^\top X_1^j(T), X^*_{j}(T)\right\rangle
=\mathbb E\left\langle Y_1^j(T),X_j^*(T)\right\rangle-\mathbb E\left\langle Y_1^j(0),X_j^*(0)\right\rangle\\
&=\ \mathbb E\int_0^T\left[\left\langle\alpha_2+A_k^\top Y_1^j,X_j^*\right\rangle+\sum_{l=1}^K\left\langle \pi_kF^\top Y_1^j,X_l^{**}\right\rangle+\left\langle \pi_kF^\top Y_1^j,\delta X_i\right\rangle\right]dt,
\end{aligned}\end{equation} and
\begin{equation}\label{eq15}\begin{aligned}
0=\ &\mathbb E\left\langle Y_2^k(T),X_k^{**}(T)\right\rangle-\mathbb E\left\langle Y_2^k(0),X_k^{**}(0)\right\rangle\\
=\ &\mathbb E\int_0^T\left[\left\langle\alpha_3+A_k^\top Y_2^k,X_k^{**}\right\rangle+\sum_{l=1}^K\left\langle \pi_kF^\top Y_2^k,X_l^{**}\right\rangle+\left\langle \pi_kF^\top Y_2^k,\delta X_i\right\rangle\right]dt.
\end{aligned}\end{equation}
Letting
\begin{equation*}\left\{\begin{aligned}
\alpha_1=\ &H_k^\top X_1^j,\\
\alpha_2=\ &-A_k^\top Y_1^j+L^\top X_1^j-Q\bar X_j,\\
\alpha_3=\ &-A_k^\top Y_2^k+(QS+ S^\top Q-S^\top QS)\widehat X-\sum_{l=1}^K\pi_lF^\top \mathbb EY_1^l+\sum_{l=1}^K\pi_lM^\top X_1^l\\
&\quad -\sum_{l=1}^K\pi_lF^\top Y_2^l,
\end{aligned}\right.\end{equation*}
and substituting \eqref{eq13}-\eqref{eq15} into \eqref{eq12}, we derive \eqref{eq16}. Notice that $(X_1^j,Y_1^j)$ are exchangeable for all $j\in\mathcal I_k,\ j\neq i$. Here, ``exchangeable" means there is no essential difference for $(X_1^j,Y_1^j)$ in the same type-$k$ in distribution sense. Thus we apply $(\mathbf X_k,\mathbf Y_k)$ to stand for $(X_1^j,Y_1^j)$ for the type-$k$ representative when the expectations are involved. The proof is complete.
\end{proof}

Based on the above analysis, we pose the following auxiliary cost functional with perturbation
\begin{equation}\label{eq18}\begin{aligned}
\Delta J_i
 =\ &\mathbb E\Bigg\{\int_0^T\Bigg[\langle Q\bar X_i,\delta X_i\rangle-\left\langle \left(QS+ S^\top Q -S^\top QS\right)\widehat X,\delta X_i\right\rangle+\sum_{k=1}^K\langle \pi_kF^\top Y_2^k,\delta X_i\rangle\\
 &+\sum_{k=1}^K\left\langle \pi_k(F^\top \widehat Y_k-M^\top \widehat X_k),\delta X_i\rangle +\langle R_{\theta_i}\bar u_i,\delta u_i\right\rangle\Bigg]dt +\langle \Gamma\bar Y_i(0),\delta Y_i(0)\rangle\Bigg\}.
\end{aligned}\end{equation}

\begin{remark}
It should be noticed that $X_1^j$ is deterministic, satisfying the ODE in \eqref{eq17} because its initial value $X_1^j(0)$ is deterministic. Since $\bar X_j$ is $\mathcal F_t$-adapted, $ Z_1^{jl}\ (1\leq l\leq N)$ cannot be omitted, though the terminal value $Y_1^j(T)$ is deterministic. By contrast, the drift term of $Y_2^k$ (the second BSDE in \eqref{eq17}) is deterministic (It follows from \eqref{eq0915a} that $\widehat X$ is deterministic (see below)) and the terminal value $Y_2^k(T)$ is zero, thus we derive that $Z_2^k$ should be zero, which implies $Y_2^k$ is deterministic indeed. Therefore, system \eqref{eq11} is a coupled FBSDE, and adjoint system \eqref{eq17} is made up by two ODEs and a BSDE.
\end{remark}
\begin{remark}
In \textbf{Step III}, $2N+K$ adjoint equations are introduced to ensure $\Delta \mathcal J_{soc}^{(N)}$ to break up with the dependence on $X_j^*,Y_j^*$ and $X_k^{**}$. This problem is caused by the existence of $X^{(N)}$ in state equations, that is $F(\cdot),\ M(\cdot)\neq0$. On the contrary, if $F(\cdot)\equiv 0,\ M(\cdot)\equiv 0$, then $X_j^*(\cdot)\equiv0,\ Y_j^*(\cdot)\equiv0$ and $X_k^{**}(\cdot)\equiv0$. There is no additional adjoint equation required to obtain auxiliary control problem.
\end{remark}

\subsection{Decentralized strategy}\label{decentralized strategy}

Motivated by \eqref{eq18}, we will pose an auxiliary forward-backward LQ optimal control problem. Firstly we substitute $X^{(N)}(\cdot)$ with $\widehat X(\cdot)$ in dynamics \eqref{eq1}-\eqref{eq2} and get the new dynamics in decentralized sense. Secondly, taking \eqref{eq18} as the perturbation of the auxiliary cost functional, one can also guess a quadratic cost functional in decentralized manner. Then we have\\

\textbf{Problem 2.} Minimize $J_i(u_i)$ over $u_i\in\mathcal U_i^d$ subject to
\begin{equation}\left\{\label{eq19}\begin{aligned}
dX_i(t)=\ &\Big[A_{\theta_i}(t)X_i(t)+B(t)u_i(t)+F(t)\widehat X(t)\Big]dt+\Big[D(t)u_i(t)+\sigma_i(t)\Big]dW_i(t),\\
X_i(0)=\ &\xi_i,
\end{aligned}\right.\end{equation} and
\begin{equation}\left\{\label{eq20}\begin{aligned}
dY_i(t)=\ &-\Big[H_{\theta_i}(t)Y_i(t)+K(t)u_i(t)+L(t)X_i(t)+M(t)\widehat X(t)\Big]dt+Z_i(t)dW_i(t),\\
Y_i(T)=\ &\Phi X_i(T)+\eta_i,
\end{aligned}\right.\end{equation}
where
\begin{equation}\label{eq21}\begin{aligned}
J_i(u_i)
 =\ &\frac{1}{2}\Bigg\{\mathbb E\int_0^T\Big[\langle Q X_i, X_i\rangle-2\langle \Theta, X_i\rangle+\langle R_{\theta_i} u_i, u_i\rangle\Big] dt+\langle \Gamma Y_i(0),Y_i(0)\rangle\Bigg\},
\end{aligned}\end{equation}
with
\begin{equation}\nonumber\begin{aligned}
&\Theta=(QS+ S^\top Q -S^\top QS)\widehat X-\sum_{k=1}^K\pi_kF^\top Y_2^k-\sum_{k=1}^K \pi_k(F^\top \widehat Y_k-M^\top \widehat X_k).
\end{aligned}\end{equation}
Here, $\widehat X,\widehat X_k,\widehat Y_k,Y_2^k$ can be chosen as
\begin{equation}\label{eq0915a}
\left(\widehat X,\widehat X_k, \widehat Y_k,Y_2^k\right)=\left(\sum\limits_{l=1}^K\pi_l\mathbb E\alpha_l,\check X_k,\mathbb E\check Y_k,\vartheta_k\right),
\end{equation}
where $\left(\alpha_k,\beta_k,\gamma_k,\widetilde{\alpha}_k,\widetilde{\beta}_k,\widetilde{\gamma}_k,\check X_k,\check Y_k,\check Z_k,\vartheta_k\right),\ 1\le k\le K$, with $\alpha_k,\beta_k,\widetilde{\beta}_k,\check Y_k\in L^2_{\FF^{W^{(k)}}}(\Omega;$ $C([0,T];\mathbb R^n))$, $\gamma_k, \widetilde{\gamma}_k,\check Z_k\in L^2_{\FF^{W^{(k)}}}(\Omega;\mathbb R^n)$ and $\widetilde{\alpha}_k,\vartheta_k\in L^2(0,T;\mathbb R^n)$ is the solution to
consistency condition system \eqref{CC}.

Now, we solve this problem by applying stochastic maximum principle, see e.g. \cite{Peng93}. Introduce an adjoint equation
\begin{equation*}\left\{\begin{aligned}
&dp_i(t)=-\Big[A_{\theta_i}^\top p_i+L^\top q_i+QX_i-\Theta\Big]dt+\overline{p}_idW_i(t),\\
&dq_i(t)=H_{\theta_i}^\top q_idt,\\
&p_i(T)=\Phi^\top q_i(T),\quad q_i(0)=\Gamma Y_i(0).
\end{aligned}\right.\end{equation*}
The stochastic maximum principle implies
\begin{equation*}\label{eq22}
\bar u_i(t)=-R_{\theta_i}^{-1}(t)\big(B^\top(t) p_i(t)+D^\top(t) \overline{p}_i(t)+K^\top(t) q_i(t)\big).
\end{equation*}
 The related Hamiltonian system becomes
\begin{equation}\label{Hamil}\left\{\begin{aligned}
&dX_i(t)=\Big[A_{\theta_i}(t)X_i(t)-B(t)R_{\theta_i}^{-1}(t)\big(B^\top(t) p_i(t)+D^\top(t) \overline{p}_i(t)+K^\top(t) q_i(t)\big)\\
&\qquad\qquad+F(t)\widehat X(t)\Big]dt+\Big[-D(t)R_{\theta_i}^{-1}(t)\big(B^\top(t) p_i(t)+D^\top(t) \overline{p}_i(t)\\
&\qquad\qquad+K^\top(t) q_i(t)\big)+\sigma_i(t)\Big]dW_i(t),\\
&dY_i(t)=-\Big[H_{\theta_i}(t)Y_i(t)-K(t)R_{\theta_i}^{-1}(t)\big(B^\top(t) p_i(t)+D^\top(t) \overline{p}_i(t)+K^\top(t) q_i(t)\big)\\
&\qquad\qquad+L(t)X_i(t)+M(t)\widehat X(t)\Big]dt+Z_i(t)dW_i(t),\\
&dp_i(t)=-\Big[A_{\theta_i}^\top (t)p_i(t)+L^\top (t) q_i(t)+Q(t)X_i(t)-\Theta\Big]dt+\overline{p}_i(t)dW_i(t),\\
&dq_i(t)=H_{\theta_i}^\top(t) q_i(t)dt,\\
&X_i(0)=\xi_i,\quad Y_i(T)=\Phi X_i(T)+\eta_i,\quad p_i(T)=\Phi^\top q_i(T),\quad q_i(0)=\Gamma Y_i(0).
\end{aligned}\right.\end{equation}

\begin{remark}
Similar to large population problem, in social optima scheme the state (resp. problem) corresponding to the external variable $\widehat X(\cdot)$ is always called auxiliary (or limiting) state (resp. problem); while the state (resp. problem) corresponding to weakly coupled term $X^{(N)}(\cdot)$ is always called real state (resp. problem).
\end{remark}
\begin{remark}
Thanks to Section 2, we derive the wellposedness of consistency condition system \eqref{CC}. Based on \eqref{CC} and \eqref{eq0915a}, one also obtains the wellposedness of \eqref{eq17} and \eqref{Hamil}.
\end{remark}

\section{Asymptotic $\varepsilon$-optimality}\label{asymptotic optimality}
\setcounter{equation}{0}
\renewcommand{\theequation}{\thesection.\arabic{equation}}

We start this section with the representation of social cost, which is to be applied to verify the asymptotic optimality.
\subsection{Representation of social cost}\label{Resc}

System \eqref{eq1}-\eqref{eq2} is rewritten as
\begin{equation}\label{eq37}
  \left\{
  \begin{aligned}
     &d\mathbf X=(\mathbf A_{\theta}\mathbf{X}+\mathbf Bu)dt+\sum_{i=1}^N(\mathbf D_iu+\overline{\sigma}_i)dW_i(t),\\
     &d\mathbf{Y} = -(\mathbf{H}_{\theta}\mathbf{Y} + \mathbf{K}u+\mathbf{M}\mathbf{X})dt + \mathbf Zd\mathbf{W}(t), \\
     &\mathbf X(0)= \overline{\xi},\quad \mathbf{Y}(T) = \mathbf{\Phi}\mathbf{X}(T)+\overline{\eta},\\
  \end{aligned}
  \right.
\end{equation}
where
\begin{equation*}
  \begin{aligned}
     & \mathbf{X}=
    \left(\begin{smallmatrix}
      X_1 \\
      \vdots\\
      X_N
    \end{smallmatrix}\right), \mathbf{Y}=
    \left(\begin{smallmatrix}
      Y_1 \\
      \vdots\\
      Y_N
    \end{smallmatrix}\right),
\mathbf Z=\begin{smallmatrix}
      1 \\
      \vdots \\
      i \\
      \vdots \\
      N
    \end{smallmatrix}\left(\begin{smallmatrix}
      Z_{11} & Z_{12} & \cdots & Z_{1i} &  \cdots & Z_{1N}\\
      \vdots & \vdots &   & \vdots &   & \vdots \\
     Z_{i1}&Z_{i2} & \cdots &Z_{ii} & \cdots & Z_{iN}\\
      \vdots & \vdots &   & \vdots &   & \vdots \\
      Z_{N1} & Z_{N2} &\cdots &Z_{Ni} & \cdots & Z_{NN}\\
    \end{smallmatrix}\right),\\
&\mathbf{A}_{\theta}=
    \left(\begin{smallmatrix}
      A_{\theta_1} + \frac{F}{N} & \frac{F}{N} & \cdots & \frac{F}{N} \\
      \frac{F}{N} & A_{\theta_2} + \frac{F}{N} & \cdots & \frac{F}{N} \\
      \vdots & \vdots & \ddots &\vdots\\
      \frac{F}{N} & \frac{F}{N} & \cdots & A_{\theta_N} + \frac{F}{N} \\
    \end{smallmatrix}\right),
\mathbf{H}_{\theta}=
    \left(\begin{smallmatrix}
      H_{\theta_1} & 0 & \cdots & 0 \\
      0 & H_{\theta_2} & \cdots & 0 \\
     \vdots & \vdots & \ddots &\vdots\\
      0 & 0 & \cdots & H_{\theta_N} \\
    \end{smallmatrix}\right),\mathbf D_i=\begin{smallmatrix}
      1 \\
      \vdots \\
      i \\
      \vdots \\
      N
    \end{smallmatrix}\left(\begin{smallmatrix}
       0 & \cdots & 0 &  \cdots & 0\\
       \vdots &   & \vdots &   & \vdots \\
     0 & \cdots &D & \cdots & 0\\
       \vdots &   & \vdots &   & \vdots \\
       0 &\cdots &0 & \cdots & 0\\
    \end{smallmatrix}\right),\\
    \end{aligned}
\end{equation*}
\begin{equation*}
  \begin{aligned}
&\mathbf{M}=
    \left(\begin{smallmatrix}
      L + \frac{M}{N} & \frac{M}{N} & \cdots & \frac{M}{N} \\
      \frac{M}{N} & L + \frac{M}{N} & \cdots & \frac{M}{N} \\
      \vdots & \vdots & \ddots &\vdots\\
      \frac{M}{N} & \frac{M}{N} & \cdots & L + \frac{M}{N} \\
    \end{smallmatrix}\right),\overline{\sigma}_i=\begin{smallmatrix}
      1 \\
      \vdots \\
      i \\
      \vdots \\
      N
    \end{smallmatrix}\left(\begin{smallmatrix}
       0 \\
       \vdots \\
     \sigma_i \\
      \vdots \\
     0\\
    \end{smallmatrix}\right),\overline{\xi} = \left(\begin{smallmatrix}
      \xi_1\\
      \vdots\\
      \xi_N
    \end{smallmatrix}\right),
\overline{\eta}= \left(\begin{smallmatrix}
      \eta_1\\
      \vdots\\
      \eta_N
    \end{smallmatrix}\right),\\
&\mathbf{B}=
    \left(\begin{smallmatrix}
      B & 0 & \cdots & 0 \\
      0 & B & \cdots & 0 \\
     \vdots & \vdots & \ddots &\vdots\\
      0 & 0 & \cdots & B \\
    \end{smallmatrix}\right),
\mathbf{K}=
    \left(\begin{smallmatrix}
      K & 0 & \cdots & 0 \\
      0 & K & \cdots & 0 \\
     \vdots & \vdots & \ddots &\vdots\\
      0 & 0 & \cdots & K \\
    \end{smallmatrix}\right),
\mathbf{\Phi}=
    \left(\begin{smallmatrix}
      \Phi & 0 & \cdots & 0 \\
      0 & \Phi & \cdots & 0 \\
     \vdots & \vdots & \ddots &\vdots\\
      0 & 0 & \cdots & \Phi \\
    \end{smallmatrix}\right),
u=
    \left(\begin{smallmatrix}
      u_1 \\
      \vdots\\
      u_N
    \end{smallmatrix}\right),\mathbf{W} = \left(\begin{smallmatrix}
      W_1\\
      \vdots\\
     W_N
    \end{smallmatrix}\right).
    \end{aligned}
\end{equation*}
Similarly, the social cost takes the form of
\begin{equation}\nonumber
  \begin{aligned}
  \mathcal{J}^{(N)}_{soc}(u)
  & =\frac{1}{2}\sum_{i=1}^N\mathbb E\Bigg\{\int_0^T\left[\big\langle Q(t)(X_i(t)-S(t) X^{(N)}(t)),X_i(t)-S (t) X^{(N)}(t)\big\rangle\right.\\
&\qquad\qquad\left.+\big\langle R_{\theta_i}(t)u_i(t),u_i(t)\big\rangle\right]dt+\langle \Gamma Y_i(0),Y_i(0)\rangle\Bigg\}\\
  & = \frac{1}{2}\Bigg\{\mathbb{E}\int_{0}^{T}\Big[ \langle \mathbf{Q}\mathbf{X},\mathbf{X}\rangle +\langle \mathbf{R}u,u\rangle \Big]dt+\langle \mathbf \Gamma\mathbf{Y}(0),\mathbf{Y}(0)\rangle\Bigg\}, \\
  \end{aligned}
\end{equation}
where
\begin{equation*}
  \begin{aligned}
     & \mathbf{Q}=
    \left(\begin{smallmatrix}
 Q + \frac{1}{N}(S^\top QS -QS - S^\top Q) &   \frac{1}{N}(S^\top QS -QS - S^\top Q) &  \cdots & \frac{1}{N}(S^\top QS -QS - S^\top Q) \\
  \frac{1}{N}(S^\top QS -QS - S^\top Q) & Q + \frac{1}{N}(S^\top QS -QS - S^\top Q) & \cdots & \frac{1}{N}(S^\top QS -QS - S^\top Q) \\
   \vdots & \vdots & \ddots &\vdots\\
 \frac{1}{N}(S^\top QS -QS - S^\top Q) & \frac{1}{N}(S^\top QS -QS - S^\top Q) & \cdots & Q + \frac{1}{N}(S^\top QS -QS - S^\top Q) \\
    \end{smallmatrix}\right) ,                                                                                      \\
     &  \mathbf{R} = \left(\begin{smallmatrix}
      R_{\theta_1} & 0 & \cdots & 0\\
      0 & R_{\theta_2} & \cdots & 0\\
      \vdots & \vdots & \ddots & \vdots\\
      0 & 0 & \cdots & R_{\theta_N}\\
    \end{smallmatrix}\right), \mathbf{\Gamma}=
    \left(\begin{smallmatrix}
 \Gamma&   0 &  \cdots & 0 \\
  0 & \Gamma & \cdots & 0\\
   \vdots & \vdots & \ddots &\vdots\\
 0 & 0 & \cdots & \Gamma \\
    \end{smallmatrix}\right).
  \end{aligned}
\end{equation*}
Let $\Psi_1(\cdot),\ \Psi_2(\cdot)$ be the solutions of
\begin{equation}\nonumber\left\{
\begin{aligned}
&d\Psi_1(t)=\mathbf A_{\theta}(t) \Psi_1(t)dt,\quad t\in[0,T],\ \  \Psi_1(0)=I,\\
&d\Psi_2(s)=\mathbf H_{\theta}(s) \Psi_2(s)ds,\quad s\in[t,T],\ \ \Psi_2(t)=I.
\end{aligned}\right.
\end{equation}
Then the strong solution $(\mathbf X,\ \mathbf Y)$ of \eqref{eq37} admits
\begin{equation}\label{eq38} \left\{
\begin{aligned}
&\mathbf X(t)=\Psi_1(t)\overline{\xi}+\Psi_1(t)\int_0^t\Psi_1(s)^{-1}\mathbf B(s)u(s)ds\\
&\qquad\qquad+\sum_{i=1}^N\Psi_1(t)\int_0^t\Psi_1(s)^{-1}\big(\mathbf D_iu(s)+\overline{\sigma}_i\big)dW_i(s),\\
&\mathbf Y(t)=\mathbb E\left[\big(\mathbf{\Phi}\mathbf{X}(T)+\overline{\eta}\big)\Psi_2(T)+\int_t^T\big(\mathbf{K}(s)u(s)+\mathbf{M}(s)\mathbf{X}(s)\big)\Psi_2(s)ds\Big|\mathcal F_t\right].
\end{aligned}\right.
\end{equation}
Define eight operators by
\begin{equation}\nonumber \left\{
\begin{aligned}
&(\mathcal L_1u(\cdot))(\cdot):=\Psi_1(\cdot)\Bigg\{\int_0^{\cdot}\Psi_1(s)^{-1}\mathbf B(s)u(s)ds+\sum_{i=1}^N\int_0^{\cdot}\Psi_1(s)^{-1}\mathbf D_iu(s)dW_i(s)\Bigg\},\\ &\widetilde{\mathcal L}_1u(\cdot):=(\mathcal L_1u(\cdot))(T),\\
&\mathcal L_2(\overline{\xi})(\cdot):=\Psi_1(\cdot)\overline{\xi}+\sum_{i=1}^N\Psi_1(\cdot)\int_0^{\cdot}\Psi_1(s)^{-1}\overline{\sigma}_idW_i(s),\quad \widetilde{\mathcal L}_2(\overline{\xi}):=\mathcal L_2(\overline{\xi})(T),\\
&(\mathcal L_3u(\cdot))(\cdot):=\mathbb E\Bigg[\int_{\cdot}^T\Bigg(\mathbf{K}(s)u(s)+\mathbf{M}(s)\Psi_1(s)\Bigg\{\int_0^{s}\Psi_1(s)^{-1}\mathbf B(s)u(s)ds\\
&\qquad\qquad\qquad+\sum_{i=1}^N\int_0^{s}\Psi_1(s)^{-1}\mathbf D_iu(s)dW_i(s)\Bigg\}\Bigg)\Psi_2(s)ds\Big|\mathcal F_{\cdot}\Bigg]\\
&\qquad\qquad\qquad+\mathbb E\Bigg[\mathbf{\Phi}\Psi_1(T)\Bigg\{\int_0^{T}\Psi_1(s)^{-1}\mathbf B(s)u(s)ds\\
&\qquad\qquad\qquad+\sum_{i=1}^N\int_0^{T}\Psi_1(s)^{-1}\mathbf D_iu(s)dW_i(s)\Bigg\}\Psi_2(T)\Big|\mathcal F_{\cdot}\Bigg],\quad \widetilde{\mathcal L}_3u(\cdot):=(\mathcal L_3u(\cdot))(0),\\
&\mathcal L_4(\overline{\xi},\overline{\eta})(\cdot):=\mathbb E\left[\int_{\cdot}^T\mathbf{M}(s)\Psi_1(s)\left(\overline{\xi}+\sum_{i=1}^N\int_0^{s}\Psi_1(s)^{-1}\overline{\sigma}_i(s)dW_i(s)\right)\Psi_2(s)ds\Big|\mathcal F_{\cdot}\right]\\
&\qquad\qquad\qquad+\mathbb E\left[\left(\mathbf{\Phi}\Psi_1(T)\left(\overline{\xi}+\sum_{i=1}^N\int_0^{T}\Psi_1(s)^{-1}\overline{\sigma}_i(s)dW_i(s)\right)+\overline{\eta}\right)\Psi_2(T)\Big|\mathcal F_{\cdot}\right],\\
&\widetilde{\mathcal L}_4(\overline{\xi},\overline{\eta}):=\mathcal L_4(\overline{\xi},\overline{\eta})(0).
\end{aligned}\right.
\end{equation}
Correspondingly, $\mathcal L_1^\ast,\ \widetilde{\mathcal L}_3^\ast$ are defined as the adjoint operators of $\mathcal L_1,\ \widetilde{\mathcal L}_3$ \emph{w.r.t.} the inner product $\langle \cdot,\cdot\rangle$ (see e.g. \cite{YZ1999}), respectively. That is $\forall\ \zeta_1\in L_{\FF}^2(0,T;\mathbb R^n)$, $\forall\ \zeta_2\in L_{\FF}^2(0,T;\mathbb R^m)$, $\forall\ \zeta_3\in L^2_{\cF_T}(\Omega;\mathbb R^m)$,
\begin{equation}\nonumber \left\{
\begin{aligned}
&\mathbb E\int_0^T\langle(\mathcal L_1u(\cdot))(t),\zeta_1(t) \rangle dt=\mathbb E\int_0^T\langle u(t),(\mathcal L_1^\ast \zeta_1(\cdot))(t) \rangle dt,\\
&\mathbb E\langle \widetilde{\mathcal L}_3u(\cdot),\zeta_2(t) \rangle =\mathbb E\int_0^T\langle u(t),(\widetilde{\mathcal L}_3^\ast\zeta_2(\cdot))(t) \rangle dt,\\
&\mathbb E\langle \widetilde{\mathcal L}_3u(\cdot),\zeta_3 \rangle =\mathbb E\int_0^T\langle u(t),(\widetilde{\mathcal L}_3^\ast\zeta_3 )(t) \rangle dt.
\end{aligned}\right.
\end{equation}
Given any admissible $u(\cdot)$, we express $\mathbf X,\ \mathbf Y$ as
\begin{equation}\nonumber \left\{
\begin{aligned}
&\mathbf X(\cdot)=(\mathcal L_1u(\cdot))(\cdot)+\mathcal L_2(\overline{\xi})(\cdot),\qquad \mathbf X(T)=\widetilde{\mathcal L}_1u(\cdot)+\widetilde{\mathcal L}_2(\overline{\xi}),\\
&\mathbf Y(\cdot)=(\mathcal L_3u(\cdot))(\cdot)+\mathcal L_4(\overline{\xi},\overline{\eta})(\cdot),\quad \mathbf Y(0)=\widetilde{\mathcal L}_3u(\cdot)+\widetilde{\mathcal L}_4(\overline{\xi},\overline{\eta}).
\end{aligned}\right.
\end{equation}
Hence, we rewrite the social cost as
\begin{equation}\nonumber
\begin{aligned}
2\mathcal{J}^{(N)}_{soc}(u)=\ &\mathbb{E}\int_{0}^{T}\Big[ \langle \mathbf{Q}\mathbf{X},\mathbf{X}\rangle +\langle \mathbf{R}u,u\rangle \Big]dt+\langle \mathbf \Gamma\mathbf{Y}(0),\mathbf{Y}(0)\rangle \\
=\ &\mathbb{E}\int_{0}^{T}\Big[\langle (\mathcal L_1^\ast\mathbf{Q}\mathcal L_1u(\cdot))(t),u(t)  \rangle+2\langle\mathcal L_1^\ast\mathbf{Q} \mathcal L_2(\overline{\xi})(t),u(t) \rangle\\
&+\langle\mathbf{Q}\mathcal L_2(\overline{\xi})(t),\mathcal L_2(\overline{\xi})(t) \rangle+\langle \mathbf Ru(t),u(t) \rangle+\langle(\widetilde{\mathcal L}_3^\ast\mathbf \Gamma\widetilde{\mathcal L}_3u(\cdot))(t),u(t) \rangle\\
&+2\langle\widetilde{\mathcal L}_3^\ast\mathbf \Gamma\widetilde{\mathcal L}_4(\overline{\xi},\overline{\eta})(t),u(t) \rangle\Big]dt+\langle \mathbf\Gamma\widetilde{\mathcal L}_4(\overline{\xi},\overline{\eta}),\widetilde{\mathcal L}_4(\overline{\xi},\overline{\eta})\rangle\\
=\ &\mathbb{E}\int_{0}^{T}\Big[\big\langle (\mathcal L_1^\ast\mathbf{Q}\mathcal L_1u(\cdot))(t)+\mathbf Ru(t)+(\widetilde{\mathcal L}_3^\ast\mathbf \Gamma\widetilde{\mathcal L}_3u(\cdot))(t),u(t)\big\rangle+ 2\big\langle\mathcal L_1^\ast\mathbf{Q} \mathcal L_2(\overline{\xi})(t)\\
&+\widetilde{\mathcal L}_3^\ast\mathbf \Gamma\widetilde{\mathcal L}_4(\overline{\xi},\overline{\eta})(t),u(t)\big\rangle+ \langle\mathbf{Q}\mathcal L_2(\overline{\xi})(t),\mathcal L_2(\overline{\xi})(t) \rangle\Big]dt+\langle \mathbf\Gamma\widetilde{\mathcal L}_4(\overline{\xi},\overline{\eta}),\widetilde{\mathcal L}_4(\overline{\xi},\overline{\eta})\rangle\\
:=\ &\langle \mathcal M_2u(\cdot),u(\cdot)\rangle+2\langle \mathcal M_1,u(\cdot)\rangle+\mathcal M_0,
  \end{aligned}
\end{equation}
where $\mathcal M_2$ is an $L^2$ bounded self-adjoint positive definite linear operator; $\mathcal M_1$ is an $L^2$ bounded operator and $M_0\in\mathbb R$; $\langle\cdot,\cdot\rangle$ denotes the inner product of different space.

\subsection{Agent $\mathcal A_i,\ 1\leq i\leq N$ perturbation}\label{Agent's perturbation}

Let $\widetilde u=(\widetilde u_1,\cdots,\widetilde u_N)$ denote the set of decentralized strategies given by
$$\widetilde u_i(t)=-R_{\theta_i}^{-1}(t)\big(B^\top(t) p_i(t)+D^\top(t) \overline{p}_i(t)+K^\top(t) q_i(t)\big),\ 1\leq i\leq N,$$
where
\begin{equation}\label{eq39}\left\{\begin{aligned}
&dX_i(t)=\left[A_{\theta_i}X_i-BR_{\theta_i}^{-1}\big(B^\top p_i+D^\top \overline{p}_i+K^\top q_i\big)+F\sum_{l=1}^K\pi_l\mathbb E\alpha_l\right]dt\\
&\qquad\qquad+\Big[-DR_{\theta_i}^{-1}\big(B^\top p_i+D^\top \overline{p}_i+K^\top q_i\big)+\sigma_i\Big]dW_i(t),\\
&dY_i(t)=-\left[H_{\theta_i}Y_i-KR_{\theta_i}^{-1}\big(B^\top p_i+D^\top \overline{p}_i+K^\top q_i\big)+LX_i+M\sum_{l=1}^K\pi_l\mathbb E\alpha_l\right]dt\\
&\qquad\qquad+Z_idW_i(t),\\
&dp_i(t)=-\Bigg[A_{\theta_i}^\top p_i+L^\top q_i+QX_i-(QS+ S^\top Q -S^\top QS)\sum_{l=1}^K\pi_l\mathbb E\alpha_l\\
&\qquad\qquad+\sum_{l=1}^K\pi_lF^\top Y_2^l+\sum_{l=1}^K \pi_l(F^\top \widehat Y_l-M^\top \widehat X_l)\Bigg]dt+\overline{p}_idW_i(t),\\
&dq_i(t)=H_{\theta_i}^\top q_idt,\\
&X_i(0)=\xi_i,\quad Y_i(T)=\Phi X_i(T)+\eta_i,\quad p_i(T)=\Phi^\top q_i(T),\quad q_i(0)=\Gamma Y_i(0)
\end{aligned}\right.\end{equation}
with $\alpha_l,\ Y_2^l,\ \widehat X_l,\ \widehat Y_l$ being given by (\ref{eq0915a}). Actually the wellposedness of \eqref{eq39} can be obtained similar to \eqref{CC} (or \eqref{eq23}) and we omit it. So do the following coupled FBSDEs (\ref{eq36}).

Correspondingly, the real state $(\widetilde X_1,\cdots,\widetilde X_N,\widetilde Y_1,\cdots,\widetilde Y_N)$ under the decentralized strategy satisfies
\begin{equation}\label{eq36}\left\{\begin{aligned}
&d\widetilde{X}_i(t)=\Big[A_{\theta_i}\widetilde{X}_i-BR_{\theta_i}^{-1}\big(B^\top p_i+D^\top \overline{p}_i+K^\top q_i\big)+F\widetilde{X}^{(N)}\Big]dt\\
&\qquad\qquad+\Big[-DR_{\theta_i}^{-1}\big(B^\top p_i+D^\top \overline{p}_i+K^\top q_i\big)+\sigma_i\Big]dW_i(t),\\
&d\widetilde{Y}_i(t)=-\Big[H_{\theta_i}\widetilde{Y}_i-KR_{\theta_i}^{-1}\big(B^\top p_i+D^\top \overline{p}_i+K^\top q_i\big)+L\widetilde{X}_i+M\widetilde{X}^{(N)}\Big]dt+\widetilde{Z}_{i\cdot}dW(t),\\
&\widetilde{X}_i(0)=\xi_i,\quad \widetilde{Y}_i(T)=\Phi \widetilde{X}_i(T)+\eta_i,
\end{aligned}\right.\end{equation}
and $\widetilde X^{(N)}(\cdot)=\frac{1}{N}\sum_{i=1}^N\widetilde X_i(\cdot)$.
For $1\leq j\leq N$, define the perturbation as
$$\delta u_j=u_j-\widetilde u_j,\quad \delta X_j=\breve X_j-\widetilde X_j,\quad \delta Y_j=\breve Y_j-\widetilde Y_j,\quad \Delta\mathcal J_j=\mathcal J_j(u_j,\widetilde u_{-j})-\mathcal J_j(\widetilde u_j,\widetilde u_{-j}).$$
It should be noticed that hereafter the notation $(\breve{X}_j,\breve{Y}_j),\ 1\leq j\leq N$ stands for the state of $\mathcal A_j$ when applying an alternative strategy $u_j$ while $\mathcal A_l,l\neq j$ applies $\widetilde u_l$.
Similar to the computations in Section \ref{p-b-p optimality}, we have
\begin{equation}\label{eq42}\begin{aligned}
&\Delta \mathcal J_{soc}^{(N)}=\mathbb E\Bigg\{\int_0^T\Bigg[\left\langle Q\widetilde X_i,\delta X_i\right\rangle-\left\langle \left(QS+ S^\top Q -S^\top QS\right)\sum_{l=1}^K\pi_l\mathbb E\alpha_l,\delta X_i\right\rangle\\
  &\qquad\qquad+\sum_{l=1}^K\left\langle \pi_lF^\top Y_2^l,\delta X_i\right\rangle+\sum_{l=1}^K\left\langle \pi_l\left(F^\top \mathbb E\mathbf{Y}_l-M^\top \mathbf{X}_l\right),\delta X_i\right\rangle+\left\langle R_{\theta_i}\widetilde u_i,\delta u_i\right\rangle\Bigg]dt\\
  &\qquad\qquad+\left\langle \Gamma\widetilde Y_i(0),\delta Y_i(0)\right\rangle\Bigg\} +\sum_{l=1}^9\varepsilon_l,
\end{aligned}\end{equation}
where
\begin{equation}\nonumber\left\{\begin{aligned}
&\varepsilon_1=\mathbb E\int_0^T\left\langle \left(QS+ S^\top Q- S^\top Q S\right)\left(\sum_{l=1}^K\pi_l\mathbb E\alpha_l-\widetilde X^{(N)}\right),N\delta X^{(N)}\right\rangle dt,\\
&\varepsilon_2=\sum_{k=1}^K\mathbb E\int_0^T\left\langle\left (QS+ S^\top Q-S^\top QS\right)\sum_{l=1}^K\pi_l\mathbb E\alpha_l,X_k^{**}-\delta X_{(k)}\right\rangle dt,\\
&\varepsilon_3=\sum_{k=1}^K\mathbb E\int_0^T\frac{1}{N_k}\sum_{j\in\mathcal I_k,j\neq i}\left\langle Q\widetilde X_j,N_k\delta X_j-X^*_j\right\rangle dt,\\
&\varepsilon_4=\sum_{k=1}^K\frac{1}{N_k}\sum_{j\in\mathcal I_k,j\neq i}\left\langle \Gamma\widetilde Y_j(0),N_k\delta Y_j(0)-Y^*_j(0)\right\rangle,\\
&\varepsilon_{5}=\sum_{k=1}^K\mathbb E\int_0^T\left\langle \pi_kM^\top \mathbf X_k-\frac{1}{N_k}\sum_{j\in\mathcal I_k,j\neq i}\pi_kM^\top X_1^j,\delta X_i\right\rangle dt,\\
&\varepsilon_{6}=\sum_{k=1}^K\mathbb E\int_0^T\left\langle -\pi_kF^\top \mathbb E\mathbf Y_k+\frac{1}{N_k}\sum_{j\in\mathcal I_k,j\neq i}\pi_kF^\top Y_1^j,\delta X_i\right\rangle dt,\\
&\varepsilon_{7}=\sum_{k=1}^K\mathbb E\int_0^T\left\langle \sum_{l=1}^K\pi_lM^\top\mathbf{X}_l-\sum_{l=1}^K\frac{\pi_l}{N_l}\sum_{j\in\mathcal I_l,j\neq i}M^\top X_1^j,X_k^{**}\right\rangle dt,\\
&\varepsilon_{8}=\sum_{k=1}^K\mathbb E\int_0^T\left\langle -\sum_{l=1}^K\pi_lF^\top\mathbb E\mathbf{Y}_l+\sum_{l=1}^K\frac{\pi_l}{N_l}\sum_{j\in\mathcal I_l,j\neq i}F^\top Y_1^j,X_k^{**}\right\rangle dt,\\
&\varepsilon_{9}=-\sum_{k=1}^K\mathbb E\int_0^T\frac{I_{\mathcal I_k}(i)}{N_k}\left\langle \pi_kF^\top Y_2^k,\delta X_i\right\rangle dt.
\end{aligned}\right.\end{equation}

Now, we consider the situation that $\mathcal A_i,\ 1\leq i\leq N$ applies an alternative strategy $u_i$ while $\mathcal A_j,j\neq i$ applies $\widetilde u_j$. The real state with the $i^{th}$ agent's perturbation is
\begin{equation}\left\{\label{eq40}\begin{aligned}
d\breve X_i(t)=\ &\Big[A_{\theta_i}\breve X_i+Bu_i+F\breve X^{(N)}\Big]dt+\Big[Du_i+\sigma_i\Big]dW_i(t),\\
d\breve Y_i(t)=\ &-\Big[H_{\theta_i}\breve Y_i+Ku_i+L\breve X_i+M\breve X^{(N)}\Big]dt+\breve Z_{i\cdot}dW(t),\\
\breve X_i(0)=\ &\xi_i,\quad \breve Y_i(T)=\ \Phi \breve X_i(T)+\eta_i,
\end{aligned}\right.\end{equation}
and
\begin{equation}\left\{\label{eq41}\begin{aligned}
d\breve X_j(t)=\ &\Big[A_{\theta_j}\breve X_j-BR_{\theta_j}^{-1}\big(B^\top p_j+D^\top \overline{p}_j+K^\top q_j\big)+F\breve X^{(N)}\Big]dt\\
&\qquad+\Big[-DR_{\theta_j}^{-1}\big(B^\top p_j+D^\top \overline{p}_j+K^\top q_j\big)+\sigma_j\Big]dW_i(t),\\
d\breve Y_j(t)=\ &-\Big[H_{\theta_j}\breve Y_j-KR_{\theta_j}^{-1}\big(B^\top p_j+D^\top \overline{p}_j+K^\top q_j\big)+L\breve X_j+M\breve X^{(N)}\Big]dt\\
&\qquad+\breve Z_{j\cdot}dW(t),\\
\breve X_j(0)=\ &\xi_j,\quad \breve Y_j(T)=\ \Phi \breve X_j(T)+\eta_j,\ 1\leq j\leq N,\ j\neq i,
\end{aligned}\right.\end{equation}
where $\breve X^{(N)}=\frac{1}{N}\sum_{i=1}^N\breve X_i$.

To obtain the asymptotic optimality, we  first derive some estimations. In all the proofs hereafter, $C$ will denote a nonnegative constant and its value may change from line to line. Similar to the proof of \cite[Lemma 5.1]{HHN2018}, by virtue of estimations of FBSDE, we derive
\begin{lemma}\label{5.1}
Under \emph{(}A1\emph{)}-\emph{(}A4\emph{)}, there exists a constant $C\geq 0$ independent of $N$ such that
\begin{equation}\label{estimation-1}\begin{aligned}
&\sum_{l=1}^K\mathbb E\sup_{0\leq t\leq T}\Big[|\alpha_l(t)|^2+|\widetilde{\alpha}_l(t)|^2+|\beta_l(t)|^2+|\widetilde{\beta}_l(t)|^2+|\breve X_l(t)|^2+|\breve Y_l(t)|^2+|\vartheta_l(t)|^2\Big]\\
&+\sup_{1\leq i\leq N}\mathbb E\sup_{0\leq t\leq T}\Big[|\widetilde X_i(t)|^2+|\widetilde Y_i(t)|^2\Big]+\sum_{l=1}^K\mathbb E\int_0^T\Big[|\gamma_l(t)|^2+|\widetilde{\gamma}_l(t)|^2+|\breve Z_l(t)|^2\Big]dt\leq C.
\end{aligned}\end{equation}
\end{lemma}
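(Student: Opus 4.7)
\medskip

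\textbf{Proof proposal.} The plan is to split the ten estimates into two groups: those for the consistency-condition processes (which live in a system of dimension depending only on $K$, hence are automatically $N$-free) and those for the decentralized real states $(\widetilde X_i,\widetilde Y_i)$ (which couple across $N$ through $\widetilde X^{(N)}$).

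\emph{Step 1 (consistency-condition processes).} For $(\alpha_l,\widetilde\alpha_l,\beta_l,\widetilde\beta_l,\check X_l,\check Y_l,\vartheta_l,\gamma_l,\widetilde\gamma_l,\check Z_l)$ I would invoke the solvability of \eqref{CC} established in Proposition~\ref{phi} (or, in degenerate cases, Propositions~\ref{p433}--\ref{p43}). Rewriting \eqref{CC} in its compact form \eqref{eq23}, the state dimension is $6Kn+8Kn+6Kn$ which is independent of $N$, and by (A3) all coefficient matrices $\mathbb A_j,\mathbb B_j,\bar{\mathbb A}_j,\mathcal C,\mathcal D_l$ lie in $L^\infty(0,T)$, while $\Xi,\Sigma,\Sigma_0$ are bounded by (A2)--(A3). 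Applying the decoupling representation $\mathcal Y=\phi\widetilde{\mathcal X}+\psi$ constructed in the proof of Proposition~\ref{phi} to the auxiliary FBSDE \eqref{eq28}, the deterministic Riccati kernel $\phi$ is bounded on $[0,T]$, the BSDE \eqref{eq30} for $(\psi,b)$ has $L^\infty$ coefficients and terminal datum, and hence standard BSDE estimates yield $\mathbb E\sup_t|\psi|^2+\mathbb E\int_0^T|b|^2dt\le C$. Standard SDE estimates then bound $\widetilde{\mathcal X}$, and finally $\mathcal X,\mathcal Y,\mathcal Z$, all with constants depending only on $K$ and the $L^\infty$-norms of the data, never on $N$.

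\emph{Step 2 (adjoint processes $p_i,\bar p_i,q_i$).} System \eqref{eq39} is a fully-coupled FBSDE for $(X_i,Y_i,p_i,\bar p_i,q_i)$ whose inhomogeneous inputs $\sum_l\pi_l\mathbb E\alpha_l,\ Y_2^l=\vartheta_l,\ \widehat X_l=\check X_l,\ \widehat Y_l=\mathbb E\check Y_l$ are bounded by Step~1 and are either deterministic or driven only by the single Brownian motion $W_i$. Because the structural coefficients only depend on $\theta_i\in\{1,\dots,K\}$, \eqref{eq39} reduces to one of $K$ wellposed FBSDEs (its solvability being obtained in the same manner as \eqref{CC} without the mean-field coupling). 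Hence there exists $C$ independent of $N$ and $i$ such that
\[
\sup_{1\le i\le N}\mathbb E\Big[\sup_{t\in[0,T]}\big(|p_i(t)|^2+|q_i(t)|^2\big)+\int_0^T|\bar p_i(t)|^2\,dt\Big]\le C.
\]

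\emph{Step 3 (real states $\widetilde X_i,\widetilde Y_i$).} Apply It\^o's formula to $|\widetilde X_i|^2$ in \eqref{eq36}, take supremum in $t$ and expectation, and use BDG together with the bound $|\widetilde X^{(N)}|^2\le\frac1N\sum_{j=1}^N|\widetilde X_j|^2$ and (A2)--(A4). Together with the uniform bound of Step~2 on $(p_i,\bar p_i,q_i)$, this gives
\[
\mathbb E\sup_{s\le t}|\widetilde X_i(s)|^2\le C+C\!\int_0^t\!\Big(\mathbb E\sup_{r\le s}|\widetilde X_i(r)|^2+\frac{1}{N}\sum_{j=1}^N\mathbb E\sup_{r\le s}|\widetilde X_j(r)|^2\Big)ds.
\]
Setting $\phi(t):=\sup_{1\le i\le N}\mathbb E\sup_{s\le t}|\widetilde X_i(s)|^2$ and using $\frac1N\sum_j\le\sup_j$, Gronwall's inequality gives $\phi(T)\le C$. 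The bound on $\widetilde Y_i$ then follows from the standard BSDE a priori estimate applied to \eqref{eq36}: the terminal datum $\Phi\widetilde X_i(T)+\eta_i$ is in $L^2$ uniformly in $i$ by the forward bound and (A2), while the driver is Lipschitz with $L^\infty$ coefficients and inhomogeneous term $L\widetilde X_i+M\widetilde X^{(N)}-KR_{\theta_i}^{-1}(B^\top p_i+D^\top\bar p_i+K^\top q_i)$ bounded uniformly in $(i,N)$.

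The main obstacle is ensuring that the Gronwall step in Step~3 stays uniform in $i$ despite the nonlocal coupling through $\widetilde X^{(N)}$; this is handled by the symmetrisation trick of passing to the supremum over $i$ \emph{before} integrating, which is legitimate because the coefficients in \eqref{eq36} are deterministic and the noises $W_i$ play exchangeable roles within each type class. A secondary point to check carefully is that the decoupling of \eqref{eq28} used in Step~1 produces $L^\infty$-bounded $\phi$ and integrable $\psi,b$ with constants independent of $N$; this is ensured precisely because the Riccati equation \eqref{eq24} is posed on the $N$-free reduced system.
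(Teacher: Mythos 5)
Your argument is correct and matches what the paper intends: the paper gives no written proof of this lemma, only the remark that it follows ``similar to the proof of [HHN2018, Lemma 5.1], by virtue of estimations of FBSDE,'' and your three-step decomposition (the $N$-free consistency-condition system handled through the decoupling field $\phi$ and BSDE \eqref{eq30}, the uniformly-in-$i$ bounded adjoint FBSDEs \eqref{eq39} with frozen deterministic inputs, and then Gronwall with the $\sup_{1\le i\le N}$ symmetrisation for the states coupled through $\widetilde X^{(N)}$) is exactly the standard instantiation of those estimates. No gaps.
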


Similar to Lemma \ref{5.1}, by the $L^2$ boundness of $u_i,\xi_i,\eta_i,\sigma_i$ and $\xi_j,\eta_j,\sigma_j,p_j,\overline{p}_j,q_j$ $(1\leq j\leq N,j\neq i)$, we have
 \begin{equation}\label{estimation-2}
\sup_{1\leq i\leq N}\mathbb E\sup_{0\leq t\leq T}\left[\left|\breve X_i(t)\right|^2+\left|\breve Y_i(t)\right|^2\right]\leq C.
\end{equation}

\begin{lemma}\label{lemma5.2}
Under \emph{(}A1\emph{)}-\emph{(}A4\emph{)}, there exists a constant $C\geq 0$ independent of $N$ such that
\begin{equation}\label{estimation-3}
\mathbb E\sup_{0\leq t\leq T}\left|\widetilde X^{(N)}(t)-\sum_{l=1}^K\pi_l\mathbb E\alpha_l(t)\right|^2\leq \frac{C}{N}+C\epsilon_N^2,
\end{equation}
where $\epsilon_N=\sup_{1\leq l\leq K}\left|\pi_l^{(N)}-\pi_l\right|.$
\end{lemma}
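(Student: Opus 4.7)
The plan is to introduce, for each $i\in\mathcal I_k$, a pathwise auxiliary copy $X_i^{\mathrm{aux}}$ of the forward state obtained by solving the Hamiltonian system \eqref{eq39} driven by the same Brownian motion $W_i$ and initial data $(\xi_i,\eta_i)$. By (A2) and the independence of the $W_i$'s, the family $\{X_i^{\mathrm{aux}}:i\in\mathcal I_k\}$ is iid in the path space with common law equal to that of $\alpha_k$ in \eqref{CC}; in particular $\mathbb E X_i^{\mathrm{aux}}=\mathbb E\alpha_k$. Setting $e_i:=\widetilde X_i-X_i^{\mathrm{aux}}$, $\bar\alpha_k^{(N)}:=\frac{1}{N_k}\sum_{i\in\mathcal I_k} X_i^{\mathrm{aux}}$ and $\delta(t):=\widetilde X^{(N)}(t)-\sum_l\pi_l\mathbb E\alpha_l(t)$, I would use the decomposition
$$\delta(t)=\frac{1}{N}\sum_{i=1}^N e_i(t)+\sum_{k=1}^K\pi_k^{(N)}\bigl(\bar\alpha_k^{(N)}(t)-\mathbb E\alpha_k(t)\bigr)+\sum_{k=1}^K(\pi_k^{(N)}-\pi_k)\mathbb E\alpha_k(t)$$
to reduce the problem to three independent pieces.

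The crucial simplification is that the diffusion coefficients of $\widetilde X_i$ in \eqref{eq36} and of $X_i^{\mathrm{aux}}$ in \eqref{eq39} coincide, since both use the same decentralized strategy $\widetilde u_i$ (driven by the common adjoint processes $p_i,\bar p_i,q_i$) and the same $\sigma_i$. Subtracting, $e_i$ satisfies the pathwise linear ODE $\dot e_i(t)=A_{\theta_i}(t)e_i(t)+F(t)\delta(t)$ with $e_i(0)=0$, and variation of constants together with (A3) yields $|e_i(t)|^2\le C\int_0^t|\delta(s)|^2\,ds$ uniformly in $i$; by Jensen,
$$\Bigl|\frac{1}{N}\sum_{i=1}^N e_i(t)\Bigr|^2 \le \frac{1}{N}\sum_{i=1}^N |e_i(t)|^2 \le C\int_0^t|\delta(s)|^2\,ds.$$
The third piece is $O(\epsilon_N)$ by (A1) and the uniform bound on $|\mathbb E\alpha_k|$ from Lemma \ref{5.1}. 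For the middle piece, since $\{X_i^{\mathrm{aux}}:i\in\mathcal I_k\}$ is iid with uniformly bounded second moment in $L^2_{\FF}(\Omega;C([0,T];\mathbb R^n))$, a Burkholder--Davis--Gundy plus Gronwall argument applied to the averaged SDE satisfied by $\bar\alpha_k^{(N)}-\mathbb E\alpha_k$, handled as in \cite[Lemma 5.1]{HHN2018}, gives $\mathbb E\sup_{t\le T}|\bar\alpha_k^{(N)}(t)-\mathbb E\alpha_k(t)|^2\le C/N_k\le C/N$.

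Assembling, $|\delta(t)|^2\le C\int_0^t|\delta(s)|^2\,ds+2|r_N(t)|^2$ with $\mathbb E\sup_{t\le T}|r_N(t)|^2\le C/N+C\epsilon_N^2$. Letting $g(t):=\mathbb E\sup_{s\le t}|\delta(s)|^2$, taking the sup over $[0,t]$ before the expectation, and invoking Gronwall's inequality on $g(t)\le C\int_0^t g(s)\,ds+C/N+C\epsilon_N^2$ delivers the claim. The main obstacle is the sharp sup-in-time LLN bound $\mathbb E\sup_t|\bar\alpha_k^{(N)}-\mathbb E\alpha_k|^2\le C/N_k$: although intuitively clear from the iid structure, obtaining the $1/N$ rate with the supremum inside the expectation requires carefully coupling the BDG martingale control on the pooled noise $\frac{1}{N_k}\sum_{i\in\mathcal I_k}(\cdots)dW_i$ with Gronwall on the averaged drift, which is the main technical input inherited from \cite[Lemma 5.1]{HHN2018}.
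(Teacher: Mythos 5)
Your proof is correct, and it takes a somewhat different route from the paper's. The paper works directly with the type-averages $\widetilde X^{(k)}=\frac{1}{N_k}\sum_{j\in\mathcal I_k}\widetilde X_j$ of the real states, writes the SDE they satisfy, compares it with the ODE for $\mathbb E\alpha_k$, and closes a coupled Gronwall system in the quantities $\mathbb E\sup_t|\widetilde X^{(k)}-\mathbb E\alpha_k|^2$ and $\mathbb E\sup_t|\delta|^2$; the within-type iid structure of $(\xi_j,p_j,\overline{p}_j,q_j,W_j)$ is invoked inside that single estimate to produce the $C/N_k$ terms. You instead insert the decentralized auxiliary states $X_i^{\mathrm{aux}}$ of \eqref{eq39} as an intermediate object, which splits the error into (a) a coupling error $e_i=\widetilde X_i-X_i^{\mathrm{aux}}$ that is a pathwise linear ODE because the diffusion coefficients of \eqref{eq36} and \eqref{eq39} cancel exactly, and (b) a clean iid law-of-large-numbers fluctuation $\bar\alpha_k^{(N)}-\mathbb E\alpha_k$. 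This isolates the only genuinely stochastic estimate --- the sup-in-time $1/N_k$ rate via Burkholder--Davis--Gundy plus Gronwall, which is exactly the input the paper also inherits from \cite[Lemma 5.1]{HHN2018} --- in one place, and it delivers Lemma \ref{lemma5.22} essentially for free, since $e_i$ is precisely the difference $X_j-\widetilde X_j$ estimated there. Both arguments rest on the same two pillars (the within-type iid structure and a Gronwall closure of the $F\widetilde X^{(N)}$ self-coupling), so neither is more general, but your decomposition is arguably tidier. One small point worth making explicit: passing from $C/N_k$ to $C/N$ uses (A1) (since $\pi_k^{(N)}\to\pi_k>0$ one has $N_k\geq cN$ for $N$ large), a step the paper also takes tacitly.
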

\begin{proof}
For $1\leq k\leq K$, state average of the $k$-type agent is defined by
$$
\widetilde X^{(k)}:=\frac{1}{N_k}\sum_{j\in\mathcal I_k}\widetilde X_j,\quad \widetilde Y^{(k)}:=\frac{1}{N_k}\sum_{j\in\mathcal I_k}\widetilde Y_j.
$$
thus
\begin{equation}\nonumber\left\{\begin{aligned}
&d\widetilde{X}^{(k)}(t)=\Bigg[A_k\widetilde{X}^{(k)}-\frac{1}{N_k}\sum_{j\in\mathcal I_k}BR_k^{-1}\big(B^\top p_j+D^\top \overline{p}_j+K^\top q_j\big)+F\widetilde{X}^{(N)}\Bigg]dt\\
&\qquad\qquad\qquad+\frac{1}{N_k}\sum_{j\in\mathcal I_k}\Big[-DR_k^{-1}\big(B^\top p_j+D^\top \overline{p}_j+K^\top q_j\big)+\sigma_j\Big]dW_j(t),\\
&d\widetilde{Y}^{(k)}(t)=-\Bigg[H_k\widetilde{Y}^{(k)}-\frac{1}{N_k}\sum_{j\in\mathcal I_k}KR_k^{-1}\big(B^\top p_j+D^\top \overline{p}_j+K^\top q_j\big)+L\widetilde{X}^{(k)}\\
&\qquad\qquad\qquad+M\widetilde{X}^{(N)}\Bigg]dt+\frac{1}{N_k}\sum_{j\in\mathcal I_k}\widetilde{Z}_{j\cdot}dW(t),\\
&\widetilde{X}^{(k)}(0)=\frac{1}{N_k}\sum_{j\in\mathcal I_k}\xi_j,\quad \widetilde{Y}^{(k)}(T)=\Phi \widetilde{X}^{(k)}(T)+\frac{1}{N_k}\sum_{j\in\mathcal I_k}\eta_j.
\end{aligned}\right.\end{equation}
Noticing that
\begin{equation}\nonumber\left\{\begin{aligned}
d\mathbb E\alpha_k(t)=\ &\Bigg[A_k\mathbb E\alpha_k-BR_k^{-1}\mathbb E\big(B^\top \mathbf P_k+D^\top \overline{\mathbf p}_k+K^\top \mathbf Q_k\big)+F\sum_{l=1}^K\pi_l\mathbb E\alpha_l \Bigg]dt, \\
\mathbb E\alpha_k(0)=\ &\mathbb E\xi^{(k)},
\end{aligned}\right.\end{equation}
we have
\begin{equation*}\left\{\begin{aligned}
&d\Big(\widetilde X^{(k)}(t)-\mathbb E\alpha_k(t)\Big)=\Bigg[A_k\Big(\widetilde X^{(k)}-\mathbb E\alpha_k\Big)-\frac{1}{N_k}\sum_{j\in\mathcal I_k}BR_k^{-1}\Big( B^\top p_j+D^\top \overline{p}_j\\
&\qquad\qquad+K^\top q_j-\mathbb E\big(B^\top \mathbf P_k+D^\top \overline{\mathbf p}_k+K^\top \mathbf Q_k\big)\Big)+F\left(\widetilde X^{(N)}-\sum_{l=1}^K\pi_l\mathbb E\alpha_l\right)\Bigg]dt\\
&\qquad\qquad+\frac{1}{N_k}\sum_{j\in\mathcal I_k}\Big[-DR_k^{-1}\big(B^\top p_j+D^\top \overline{p}_j+K^\top q_j\big)+\sigma_j\Big]dW_j(t),\\
&\Big(\widetilde X^{(k)}-\mathbb E\alpha_k\Big)(0)=\frac{1}{N_k}\sum_{j\in\mathcal I_k}\xi_j-\mathbb E\xi^{(k)}.
\end{aligned}\right.\end{equation*}
Here, we apply $\mathbf P_k,\overline{\mathbf  P}_k,\mathbf Q_k$ to denote the $k$-type representative when the expectations are involved as $(\mathbf X_k,\mathbf Y_k)$ before. Under (A2), for $1\leq k\leq K$, $\{\xi_j,j\in\mathcal I_k\}$ are identically independent distributed (i.i.d). Notice that $(p_j(\cdot),\overline{p}_j(\cdot))\in \mathcal F_t^j,\ q_j(\cdot)\in \mathcal F_0^j$. Therefore $\{(p_j,\overline{p}_j),j\in\mathcal I_k\}$ are i.i.d and $\{q_j,j\in\mathcal I_k\}$ are deterministic. Using Cauchy-Schwartz inequality, Burkholder-Davis-Gundy inequality and estimations of SDE, we derive
\small\begin{equation*}\begin{aligned}
&\mathbb E\sup_{0\leq s\leq t}\Big|\widetilde X^{(k)}-\mathbb E\alpha_k\Big|^2\\
\leq\ &\mathbb E\left|\frac{1}{N_k}\sum_{j\in\mathcal I_k}\left(\xi_j-\mathbb E\xi^{(k)}\right)\right|^2+C\mathbb E\int_0^t\left[\left|\widetilde X^{(k)}-\mathbb E\alpha_k\right|^2+\left|\widetilde X^{(N)}-\sum_{l=1}^K\pi_l\mathbb E\alpha_l\right|^2\right]ds\\
&+C\mathbb E\int_0^t\left|\frac{1}{N_k}\sum_{j\in\mathcal I_k}\Big( B^\top p_j+D^\top \overline{p}_j+K^\top q_j-\mathbb E\big(B^\top \mathbf P_k+D^\top \overline{\mathbf p}_k+K^\top \mathbf Q_k\big)\Big)\right|^2ds\\
&+C\mathbb E\int_0^t\frac{1}{N_k}\sum_{j\in\mathcal I_k}\left|-DR_k^{-1}\big(B^\top p_j+D^\top \overline{p}_j+K^\top q_j\big)+\sigma_j\right|^2ds\\
\leq\ &C\mathbb E\int_0^t\left|\widetilde X^{(k)}-\mathbb E\alpha_k\right|^2ds+C\mathbb E\int_0^t\left|\widetilde X^{(N)}-\sum_{l=1}^K\pi_l\mathbb E\alpha_l\right|^2ds+\frac{C}{N_k} .
\end{aligned}\end{equation*}\normalsize
Gronwall inequality implies that
\begin{equation*}\begin{aligned}
\mathbb E\sup_{0\leq s\leq t}\Big|\widetilde X^{(k)}-\mathbb E\alpha_k\Big|^2
\leq C\mathbb E\int_0^t\left|\widetilde X^{(N)}-\sum_{l=1}^K\pi_l\mathbb E\alpha_l\right|^2ds+\frac{C}{N_k}.
\end{aligned}\end{equation*}
Since
\begin{equation*}\begin{aligned}
\widetilde X^{(N)}-\sum_{l=1}^K\pi_l\mathbb E\alpha_l=&\sum_{l=1}^K\Big(\pi_l^{(N)}\widetilde X^{(l)}-\pi_l\mathbb E\alpha_l\Big)\\
=&
\sum_{l=1}^K\pi_l^{(N)}\Big(\widetilde X^{(l)}-\mathbb E\alpha_l\Big)+\sum_{l=1}^K\Big(\pi_l^{(N)}-\pi_l\Big)\mathbb E\alpha_l,
\end{aligned}\end{equation*}
we get
\begin{equation*}\begin{aligned}
&\mathbb E\sup_{0\leq s\leq t}\left|\widetilde X^{(N)}-\sum_{l=1}^K\pi_l\mathbb E\alpha_l\right|^2\\
\leq&\
C\sum_{l=1}^K\mathbb E\sup_{0\leq s\leq t}\Big|\widetilde X^{(l)}-\mathbb E\alpha_l\Big|^2+C\epsilon_N^2\leq C\mathbb E\int_0^t\left|\widetilde X^{(N)}-\sum_{l=1}^K\pi_l\mathbb E\alpha_l\right|^2ds+\frac{C}{N}+C\epsilon_N^2.
\end{aligned}\end{equation*}
Therefore, the result follows from Gronwall inequality.
\end{proof}

By Lemma \ref{lemma5.2}, we easily derive the following result.
\begin{lemma}\label{lemma5.22}
Under \emph{(}A1\emph{)}-\emph{(}A4\emph{)}, there exists a constant $C\geq 0$ independent of $N$ such that
\begin{equation}\label{estimation-33}
\sup_{1\leq j\leq N}\left[\mathbb E\sup_{0\leq t\leq T}\big|X_j(t)-\widetilde X_j(t)\big|^2+\mathbb E\sup_{0\leq t\leq T}\big|Y_j(t)-\widetilde Y_j(t)\big|^2\right]\leq \frac{C}{N}+C\epsilon_N^2.
\end{equation}
\end{lemma}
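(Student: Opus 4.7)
The plan is to exploit the fact that the only difference between systems \eqref{eq39} and \eqref{eq36} is the mean-field coupling term: system \eqref{eq39} contains $F\sum_{l=1}^K\pi_l\mathbb E\alpha_l$ (resp. $M\sum_{l=1}^K\pi_l\mathbb E\alpha_l$) in the drifts, whereas \eqref{eq36} contains $F\widetilde X^{(N)}$ (resp. $M\widetilde X^{(N)}$). Crucially, the control input $\widetilde u_j=-R_{\theta_j}^{-1}(B^\top p_j+D^\top\bar p_j+K^\top q_j)$, the initial data $\xi_j$, the terminal data $\eta_j$, the coefficients $A_{\theta_j},H_{\theta_j},B,D,K,L,\sigma_j$, and the Brownian motion $W_j$ are identical in both systems. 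Hence subtracting \eqref{eq36} from \eqref{eq39} produces a linear FBSDE for $\Delta X_j:=X_j-\widetilde X_j$, $\Delta Y_j:=Y_j-\widetilde Y_j$ with zero initial value, forcing only from the mean-field discrepancy.

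Concretely, setting $\Lambda(t):=\sum_{l=1}^K\pi_l\mathbb E\alpha_l(t)-\widetilde X^{(N)}(t)$, the difference system is
\begin{equation*}\left\{\begin{aligned}
&d\Delta X_j=\bigl[A_{\theta_j}\Delta X_j+F\,\Lambda\bigr]dt,\quad \Delta X_j(0)=0,\\
&d\Delta Y_j=-\bigl[H_{\theta_j}\Delta Y_j+L\Delta X_j+M\,\Lambda\bigr]dt+(Z_{j\cdot}-\widetilde Z_{j\cdot})dW(t),\\
&\Delta Y_j(T)=\Phi\,\Delta X_j(T).
\end{aligned}\right.\end{equation*}
Note that the forward equation is a linear ODE driven by $\Lambda$ (the diffusion term cancels since both systems use the same control), so I would first apply the standard Gronwall inequality to obtain
$$\mathbb E\sup_{0\le t\le T}|\Delta X_j(t)|^2\le C\,\mathbb E\int_0^T|\Lambda(s)|^2ds.$$
By Lemma \ref{lemma5.2}, the right-hand side is bounded by $C/N+C\epsilon_N^2$, and the bound is uniform in $j$ since neither the constant $C$ nor the $L^2$-norm of $\Lambda$ depends on $j$.

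Next, for the $\Delta Y_j$ component, I would invoke the classical a priori estimate for a linear BSDE with bounded coefficients and bounded terminal/generator data, yielding
$$\mathbb E\sup_{0\le t\le T}|\Delta Y_j(t)|^2\le C\Bigl[\mathbb E|\Phi\Delta X_j(T)|^2+\mathbb E\int_0^T\bigl(|L\Delta X_j(s)|^2+|M\,\Lambda(s)|^2\bigr)ds\Bigr].$$
Combining this with the forward estimate and Lemma \ref{lemma5.2} gives $\mathbb E\sup_{0\le t\le T}|\Delta Y_j(t)|^2\le C/N+C\epsilon_N^2$, once again uniformly in $j$. Taking the supremum over $1\le j\le N$ concludes the lemma.

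There is essentially no hard step here: the proof reduces to (i) the algebraic observation that the perturbed and limiting systems differ only through $\Lambda$, (ii) the standard FBSDE a priori estimate with Gronwall, and (iii) the already-established mean-field approximation bound of Lemma \ref{lemma5.2}. The only point requiring minor care is verifying that the control-dependent terms in both systems really do coincide pointwise in $\omega$ (not merely in distribution), which is immediate because the adjoint processes $p_j,\bar p_j,q_j$ are defined from the same decentralized Hamiltonian system \eqref{eq39} driven by $W_j$ in both the real and the limiting dynamics. Consequently the forcing $\Lambda$ is the sole source of discrepancy, and its smallness is exactly what Lemma \ref{lemma5.2} delivers.
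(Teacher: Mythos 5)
Your proof is correct and follows exactly the route the paper intends: the paper states only that the lemma is ``easily derived'' from Lemma \ref{lemma5.2}, and your argument---subtracting \eqref{eq36} from \eqref{eq39} so that the identical control, diffusion, initial and terminal data cancel, leaving a linear difference FBSDE forced solely by $\Lambda=\sum_{l=1}^K\pi_l\mathbb E\alpha_l-\widetilde X^{(N)}$, then applying Gronwall and the standard BSDE a priori estimate together with Lemma \ref{lemma5.2}---is precisely the omitted derivation. The constants depend only on the uniform coefficient bounds from (A3)--(A4), so the estimate is indeed uniform in $j$ as claimed.
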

\begin{lemma}\label{Lemma5}
Under \emph{(}A1\emph{)}-\emph{(}A4\emph{)}, there exists a constant $C\geq 0$ independent of $N$ such that
\begin{equation}\begin{aligned}\label{estimation-5}
\sup_{1\leq j\leq N,j\neq i}\Bigg[\mathbb E\sup_{0\leq t\leq T}|\delta X_j(t)|^2+\mathbb E\sup_{0\leq t\leq T}|\delta Y_j(t)|^2&+\mathbb E\int_0^T\sum_{l=1}^N\big|\delta Z_{jl}(t)\big|^2dt\Bigg]\\
&\leq\frac{C}{N^2}\Bigg(1+\mathbb E\int_0^T|\delta u_i|^2ds\Bigg).
\end{aligned}\end{equation}
\end{lemma}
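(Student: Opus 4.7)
\proof[Proof plan]
The plan is to first write down the dynamics of the perturbations and exploit the fact that for $j\neq i$ the forward equation is an ODE (since $\delta u_j=0$ and the diffusion of \eqref{eq41}$-$\eqref{eq36} only carries $\delta u_j$). Subtracting \eqref{eq36} from \eqref{eq40}--\eqref{eq41}, I would obtain
\begin{equation*}
\begin{aligned}
d\delta X_j&=\bigl[A_{\theta_j}\delta X_j+F\delta X^{(N)}\bigr]dt,\quad \delta X_j(0)=0, \qquad j\neq i,\\
d\delta X_i&=\bigl[A_{\theta_i}\delta X_i+B\delta u_i+F\delta X^{(N)}\bigr]dt+D\delta u_i\,dW_i(t),\quad \delta X_i(0)=0,
\end{aligned}
\end{equation*}
together with the analogous BSDE system for $\delta Y_j,\delta Z_{j\cdot}$.

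Next I would bound the empirical average $\delta X^{(N)}=\tfrac1N(\delta X_i+\sum_{k=1}^K\delta X_{(k)})$, where $\delta X_{(k)}=\sum_{j\in\mathcal I_k,j\neq i}\delta X_j$ satisfies the ODE $d\delta X_{(k)}=[A_k\delta X_{(k)}+(N_k-I_{\mathcal I_k}(i))F\delta X^{(N)}]dt$, with zero initial value. Representing $\delta X_{(k)}$ via its transition semigroup and using $\sum_k N_k=N$, one obtains the pathwise estimate
\begin{equation*}
|\delta X^{(N)}(t)|\le \tfrac{1}{N}|\delta X_i(t)|+C\int_0^t|\delta X^{(N)}(s)|\,ds,
\end{equation*}
so Gronwall gives $|\delta X^{(N)}(t)|\le \tfrac{C}{N}\sup_{0\le s\le t}|\delta X_i(s)|$. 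Plugging this into the SDE for $\delta X_i$ and applying the Burkholder--Davis--Gundy inequality, Cauchy--Schwarz and Gronwall yields $\mathbb E\sup_{0\le t\le T}|\delta X_i(t)|^2\le C\mathbb E\int_0^T|\delta u_i|^2\,dt$ (the $\tfrac{1}{N^2}$ self-feedback term is absorbed for $N$ large enough), and consequently
\begin{equation*}
\mathbb E\sup_{0\le t\le T}|\delta X^{(N)}(t)|^2\le \tfrac{C}{N^2}\Bigl(1+\mathbb E\int_0^T|\delta u_i|^2\,dt\Bigr).
\end{equation*}
Substituting back into the ODE representation $\delta X_j(t)=\int_0^t e^{A_{\theta_j}(t-s)}F\delta X^{(N)}(s)\,ds$ for $j\neq i$ produces the desired $\tfrac{C}{N^2}$ estimate for $\mathbb E\sup|\delta X_j|^2$.

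Finally, with the forward estimates in hand, I would treat the backward equation for $\delta Y_j,\delta Z_{j\cdot}$ ($j\neq i$) as a linear BSDE with driver $L\delta X_j+M\delta X^{(N)}$ and terminal condition $\Phi\delta X_j(T)$. A standard a priori BSDE estimate (e.g.\ from the classical $L^2$ theory) gives
\begin{equation*}
\mathbb E\sup_{0\le t\le T}|\delta Y_j(t)|^2+\mathbb E\int_0^T\sum_{l=1}^N|\delta Z_{jl}(t)|^2\,dt\le C\mathbb E|\delta X_j(T)|^2+C\mathbb E\int_0^T\bigl(|\delta X_j|^2+|\delta X^{(N)}|^2\bigr)dt,
\end{equation*}
which is again $O(N^{-2})(1+\mathbb E\int_0^T|\delta u_i|^2dt)$ by the previous step. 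The main obstacle I anticipate is the circular dependence between $\delta X_i$ and $\delta X^{(N)}$; handling it cleanly requires first extracting the $\tfrac{1}{N}$ factor from $\delta X^{(N)}$ in terms of $\delta X_i$ before closing the Gronwall argument on $\delta X_i$, and taking care that the constant $C$ does not depend on $N$ or $i$.

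\endproof
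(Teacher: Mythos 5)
Your proposal is correct and follows essentially the same route as the paper: decompose $\delta X^{(N)}=\frac1N\delta X_i+\frac1N\sum_{k}\delta X_{(k)}$, close a Gronwall argument to obtain $\mathbb E\sup_{t}|\delta X^{(N)}|^2\le \frac{C}{N^2}\big(1+\mathbb E\int_0^T|\delta u_i|^2ds\big)$, and then propagate this through the ODE for $\delta X_j$ ($j\neq i$) and a standard linear BSDE estimate for $(\delta Y_j,\delta Z_{j\cdot})$. The one point to tighten is your pathwise inequality $|\delta X^{(N)}(t)|\le\frac1N|\delta X_i(t)|+C\int_0^t|\delta X^{(N)}(s)|ds$: because the drift coefficients $A_{\theta_j}$ differ across types, the term $\frac1N\sum_k A_k\delta X_{(k)}$ is not controlled by $|\delta X^{(N)}|$ alone, so the Gronwall closure must be run on the larger quantity $\frac1N\sum_k\sup_{s\le t}|\delta X_{(k)}(s)|$ (or, as the paper does, on $\mathbb E\sup|\delta X_i|^2+\sum_k\mathbb E\sup|\delta X_{(k)}|^2$), after which the same conclusion follows.
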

\begin{proof}
According to \eqref{eq6}-\eqref{eq9}, it yields
\begin{equation*}\begin{aligned}
&\mathbb E\sup_{0\leq s\leq t}|\delta X_i|^2\leq C\Bigg(1+\mathbb E\int_0^T|\delta u_i|^2ds\Bigg)+C\mathbb E\int_0^t|\delta X_i|^2ds+C\mathbb E\int_0^t|\delta X^{(N)}|^2ds,\\
&\mathbb E\sup_{0\leq s\leq t}|\delta Y_i|^2+\mathbb E\int_0^t\sum_{l=1}^N\big|\delta Z_{il}\big|^2ds\leq C\mathbb E|\delta X_i(T)|^2+C\Bigg(1+\mathbb E\int_0^T|\delta u_i|^2ds\Bigg)\\
&\qquad\qquad\qquad\qquad+C\mathbb E\int_0^t|\delta Y_i|^2ds+C\mathbb E\int_0^t|\delta X_i|^2ds+C\mathbb E\int_0^t\left|\delta X^{(N)}\right|^2ds,
\end{aligned}\end{equation*}
for $j\neq i$,
\begin{equation*}\begin{aligned}
&\mathbb E\sup_{0\leq s\leq t}|\delta X_j|^2\leq C\mathbb E\int_0^t|\delta X_j|^2ds+C\mathbb E\int_0^t\left|\delta X^{(N)}\right|^2ds,\\
&\mathbb E\sup_{0\leq s\leq t}|\delta Y_j|^2+\mathbb E\int_0^t\sum_{l=1}^N\big|\delta Z_{jl}\big|^2ds\\
&\qquad\leq C\mathbb E|\delta X_j(T)|^2+C\mathbb E\int_0^t|\delta Y_j|^2ds+C\mathbb E\int_0^t|\delta X_j|^2ds+C\mathbb E\int_0^t\left|\delta X^{(N)}\right|^2ds,
\end{aligned}\end{equation*}
and
\begin{equation*}\begin{aligned}
&\mathbb E\sup_{0\leq s\leq t}|\delta X_{(k)}|^2\leq C\mathbb E\int_0^t|\delta X_{(k)}|^2ds+CN^2\mathbb E\int_0^t|\delta X^{(N)}|^2ds.
\end{aligned}\end{equation*}
Noticing that $$\delta X^{(N)}=\frac{1}{N}\delta X_i+\frac{1}{N}\sum_{l=1}^K\delta X_{(l)},$$
we arrive at
\begin{equation*}\begin{aligned}
&\mathbb E\sup_{0\leq s\leq t}|\delta X_i|^2\leq C\Bigg(1+\mathbb E\int_0^T|\delta u_i|^2ds\Bigg)+C\mathbb E\int_0^t|\delta X_i|^2ds+\frac{C}{N^2}\sum_{l=1}^K\mathbb E\int_0^t\left|\delta X_{(l)}\right|^2ds,
\end{aligned}\end{equation*}
and
\begin{equation*}\begin{aligned}
&\mathbb E\sup_{0\leq s\leq t}\left|\delta X_{(k)}\right|^2\leq C\mathbb E\int_0^t|\delta X_{(k)}|^2ds+C\mathbb E\int_0^t|\delta X_i|^2ds+
 C\sum_{l=1}^K\mathbb E\int_0^t\left|\delta X_{(l)}\right|^2ds.
\end{aligned}\end{equation*}
Therefore, it follows from Gronwall inequality that
\begin{equation*}\begin{aligned}
\mathbb E\sup_{0\leq s\leq t}|\delta X_i|^2+\sum_{l=1}^K\mathbb E\sup_{0\leq s\leq t}\left|\delta X_{(l)}\right|^2\leq C\Bigg(1+\mathbb E\int_0^T|\delta u_i|^2ds\Bigg).
\end{aligned}\end{equation*}
Thus,
\begin{equation*}
\mathbb E\sup_{0\leq s\leq t}\left|\delta X^{(N)}\right|^2\leq\frac{C}{N^2}\Bigg(1+\mathbb E\int_0^T|\delta u_i|^2ds\Bigg).
\end{equation*}
Using Gronwall inequality again,
we have
\begin{equation*}\begin{aligned}
&\sup_{1\leq j\leq N,j\neq i}\Bigg[\mathbb E\sup_{0\leq s\leq t}|\delta X_j|^2+\mathbb E\sup_{0\leq s\leq t}|\delta Y_j|^2+\mathbb E\int_0^t\sum_{l=1}^N\big|\delta Z_{jl}\big|^2ds\Bigg]\\
&\leq\frac{C}{N^2}\Bigg(1+\mathbb E\int_0^T|\delta u_i|^2ds\Bigg).
\end{aligned}\end{equation*}
\end{proof}


\begin{lemma}\label{lemma5.4}
Under \emph{(}A1\emph{)}-\emph{(}A4\emph{)}, there exists a constant $C\geq 0$ independent of $N$ such that
\begin{equation}\label{estimation-6}
\sum_{l=1}^K\mathbb E\sup_{0\leq t\leq T}\left|X_l^{**}(t)-\delta X_{(l)}(t)\right|^2\leq \Big(\frac{C}{N^2}+C\epsilon_N^2\Big)  \Bigg(1+\mathbb E\int_0^T|\delta u_i|^2ds\Bigg),
\end{equation}
for $j\in\mathcal I_k,\ j\neq i$, $1\leq k\leq K$,
\begin{equation}\label{estimation-7}
\mathbb E\sup_{0\leq t\leq T}\left|N_k\delta X_j(t)-X^*_j(t)\right|^2\leq \Big(\frac{C}{N^2}+C\epsilon_N^2\Big)  \Bigg(1+\mathbb E\int_0^T|\delta u_i|^2ds\Bigg),
\end{equation}
\begin{equation}\label{estimation-77}
\mathbb E\sup_{0\leq t\leq T}\left|N_k\delta Y_j(t)-Y^*_j(t)\right|^2\leq \Big(\frac{C}{N^2}+C\epsilon_N^2\Big)  \Bigg(1+\mathbb E\int_0^T|\delta u_i|^2ds\Bigg).
\end{equation}
\end{lemma}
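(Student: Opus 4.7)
The plan is to prove the three estimates by comparing the dynamics of the approximating processes $(X^{**}_k, X^*_j, Y^*_j)$ defined in \eqref{eq11} with those of the true variations $(\delta X_{(k)}, N_k\delta X_j, N_k\delta Y_j)$ from \eqref{eq9}, then close the arguments with Gronwall-type inequalities supplied by Lemma \ref{Lemma5} and standard BSDE estimates. The common small parameters will be $\epsilon_N$ (distributional error for $\pi_k^{(N)} \to \pi_k$) and $1/N$ (arising from $I_{\mathcal I_k}(i)/N$ and from the weight $N_k/N - \pi_k = \pi_k^{(N)} - \pi_k$).

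For \eqref{estimation-6}, I would subtract the ODE for $X_k^{**}$ in \eqref{eq11} from that satisfied by $\delta X_{(k)}$, and rewrite the driving term using the identity $\delta X^{(N)} = \frac{1}{N}\delta X_i + \frac{1}{N}\sum_{l=1}^K \delta X_{(l)}$. The resulting equation has the schematic form
\begin{equation*}
d\bigl(X_k^{**}-\delta X_{(k)}\bigr) = \Bigl[A_k\bigl(X_k^{**}-\delta X_{(k)}\bigr) + F\pi_k\sum_{l=1}^K\bigl(X_l^{**}-\delta X_{(l)}\bigr) + R_k^N\Bigr]dt,
\end{equation*}
where the residual $R_k^N$ collects all terms with prefactor $(\pi_k - \tfrac{N_k - I_{\mathcal I_k}(i)}{N})$ multiplied by $\delta X_i$, by $\sum_l \delta X_{(l)}$, or by $\sum_l X_l^{**}$. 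Since $|\pi_k - \tfrac{N_k - I_{\mathcal I_k}(i)}{N}| \leq \epsilon_N + \tfrac{1}{N}$, and the proof of Lemma \ref{Lemma5} already yields the bound $\mathbb E\sup_{[0,T]}(|\delta X_i|^2 + \sum_l|\delta X_{(l)}|^2) \leq C(1+\mathbb E\int_0^T|\delta u_i|^2\,ds)$ (with an analogous a priori bound for $X_l^{**}$ via Gronwall applied to its own ODE), summing over $k$ and applying Gronwall inequality delivers \eqref{estimation-6}.

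For \eqref{estimation-7}, I would apply the same strategy to $N_k\delta X_j - X_j^*$ for $j\in\mathcal I_k$, $j\neq i$. Subtracting the two ODEs (after multiplying the equation for $\delta X_j$ by $N_k$) produces the driver difference $N_k F \delta X^{(N)} - F\pi_k\delta X_i - F\pi_k \sum_l X_l^{**}$, which decomposes exactly as in the previous step into terms weighted by $\tfrac{N_k}{N} - \pi_k$ and by $X_l^{**} - \delta X_{(l)}$. Plugging in \eqref{estimation-6} and again invoking Lemma \ref{Lemma5} plus Gronwall yields the stated $O(N^{-2} + \epsilon_N^2)$ bound, uniform in $j$.

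For \eqref{estimation-77}, the comparison $N_k\delta Y_j - Y_j^*$ is a BSDE with terminal data $\Phi(N_k\delta X_j(T) - X_j^*(T))$ and a generator difference containing $L(N_k\delta X_j - X_j^*)$, $M\pi_k\sum_l(\delta X_{(l)}-X_l^{**})$, and the same $(\tfrac{N_k}{N}-\pi_k)$-weighted terms. Standard $L^2$-estimates for BSDE reduce the claim to controlling these inputs, which is precisely what \eqref{estimation-6} and \eqref{estimation-7} already provide. The main obstacle, I expect, will be bookkeeping rather than any new analytic difficulty: one must carefully track that each summation over $k$ (or over $j\in\mathcal I_k$) of a residual term of size $1/N$ or $\epsilon_N$ does not accumulate an extra factor of $N$, which is guaranteed here because the residuals always appear weighted by $\pi_k$ or by $1/N_k$ and because the uniform-in-$j$ norm is what is being estimated.
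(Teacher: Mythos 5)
Your proposal is correct and follows essentially the same route as the paper: subtract the dynamics in \eqref{eq11} from those of $(\delta X_{(k)}, N_k\delta X_j, N_k\delta Y_j)$, isolate residuals weighted by $\pi_k-\tfrac{N_k-I_{\mathcal I_k}(i)}{N}$ (of size $\epsilon_N+\tfrac1N$), control $\delta X_i$, $\sum_l\delta X_{(l)}$ and $\sum_l X_l^{**}$ by the a priori bounds from Lemma \ref{Lemma5} and a Gronwall estimate on \eqref{eq11}, then close with Gronwall for the forward parts and standard $L^2$ BSDE estimates for \eqref{estimation-77}. This is exactly the paper's argument.
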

\begin{proof}
First,
\begin{equation*}\left\{\begin{aligned}
&d(X_k^{**}-\delta X_{(k)})=\Bigg[A_k(X_k^{**}-\delta X_{(k)})+F\Bigg(\pi_k-\frac{N_k-I_{\mathcal I_k}(i)}{N}\Bigg)\delta X_i\\
&\qquad\qquad\qquad\qquad+F\pi_k\sum_{l=1}^K(X_l^{**}-\delta X_{(l)})+F\Bigg(\pi_k-\frac{N_k-I_{\mathcal I_k}(i)}{N}\Bigg)\sum_{l=1}^K\delta X_{(l)}\Bigg]dt,\\
&(X_k^{**}-\delta X_{(k)})(0)=0,
\end{aligned}\right.\end{equation*}
and for $j\in\mathcal I_k,\ j\neq i$,
\begin{equation*}\left\{\begin{aligned}
&d(X_j^*-N_k\delta X_j)=\Bigg[A_{k}(X_j^*-N_k\delta X_j)+F\Big(\pi_k-\pi_k^{(N)}\Big)\delta X_i+F\Big(\pi_k-\pi_k^{(N)}\Big)\sum_{l=1}^KX_l^{**}\\
&\qquad\qquad\qquad\qquad+F\pi_k^{(N)}\sum_{l=1}^K(X_l^{**}-\delta X_{(l)})\Bigg]dt,\\
&(X_j^*-N_k\delta X_j)(0)=0,
\end{aligned}\right.\end{equation*}and
\begin{equation*}\left\{\begin{aligned}
&d(Y_j^*-N_k\delta Y_j)=-\Bigg[H_{k}(Y_j^*-N_k\delta Y_j)+L(X_j^*-N_k\delta X_j)+M\Big(\pi_k-\pi_k^{(N)}\Big)\delta X_i\\
&\quad\ \ +M\Big(\pi_k-\pi_k^{(N)}\Big)\sum_{l=1}^KX_l^{**}+M\pi_k^{(N)}\sum_{l=1}^K(X_l^{**}-\delta X_{(l)})\Bigg]dt+(Z_{j\cdot}^*-N_k\delta Z_{j\cdot})dW(t),\\
&(Y_j^*-N_k\delta Y_j)(T)=\Phi(X_j^*(T)-N_k\delta X_j(T)).
\end{aligned}\right.\end{equation*}
According to Burkholder-Davis-Gundy inequality,
\small\begin{equation*}\begin{aligned}
\mathbb E\sup_{0\leq s\leq t}|X_k^{**}-\delta X_{(k)}|^2 \leq\ &C\mathbb E\int_0^t|X_k^{**}-\delta X_{(k)}|^2ds+C\sum_{l=1}^K\mathbb E\int_0^t|X_l^{**}-\delta X_{(l)}|^2ds\\
&\quad+\Big(\frac{C}{N^2}+C\epsilon_N^2\Big)\mathbb E\int_0^t\left[|\delta X_i|^2+\sum_{l=1}^K|\delta X_{(l)}|^2\right]ds\\
\leq \ & C\mathbb E\int_0^t|X_k^{**}-\delta X_{(k)}|^2ds+C\sum_{l=1}^K\mathbb E\int_0^t|X_l^{**}-\delta X_{(l)}|^2ds\\
&\quad+\Big(\frac{C}{N^2}+C\epsilon_N^2\Big)  \Bigg(1+\mathbb E\int_0^T|\delta u_i|^2ds\Bigg).
\end{aligned}\end{equation*}\normalsize
Thus,
\begin{equation*}\begin{aligned}
\sum_{l=1}^K\mathbb E\sup_{0\leq s\leq t}|X_l^{**}-\delta X_{(l)}|^2
\leq& C\sum_{l=1}^K\mathbb E\int_0^t|X_l^{**}-\delta X_{(l)}|^2\\
&\quad+\Big(\frac{C}{N^2}+C\epsilon_N^2\Big)  \Bigg(1+\mathbb E\int_0^T|\delta u_i|^2ds\Bigg).
\end{aligned}\end{equation*}
By virtue of Gronwall inequality, we have
$$\sum_{l=1}^K\mathbb E\sup_{0\leq t\leq T}|X_l^{**}(t)-\delta X_{(l)}(t)|^2\leq\Big(\frac{C}{N^2}+C\epsilon_N^2\Big)  \Bigg(1+\mathbb E\int_0^T|\delta u_i|^2ds\Bigg).$$
Similarly,
\begin{equation*}\begin{aligned}
\mathbb E\sup_{0\leq s\leq t}|X_j^*-N_k\delta X_j|^2 \leq& C\mathbb E\int_0^t|X_j^*-N_k\delta X_j|^2ds+C\sum_{l=1}^K\mathbb E\int_0^t|X_l^{**}-\delta X_{(l)}|^2ds\\
&\ +C\epsilon_N^2\mathbb E\int_0^t\left[|\delta X_i|^2+\sum_{l=1}^K|X_l^{**}|^2\right]ds.
\end{aligned}\end{equation*}
By the first equation of \eqref{eq11}, we derive
\begin{equation*}\begin{aligned}
&\mathbb E\sup_{0\leq s\leq t}|X_k^{**}|^2 \leq\ C\mathbb E\int_0^t|X_k^{**}|^2ds+C\sum_{l=1}^K\mathbb E\int_0^t|X_l^{**}|^2ds+C\mathbb E\int_0^t|\delta X_i|^2ds.
\end{aligned}\end{equation*}
It follows from Gronwall inequality that
\begin{equation*}\begin{aligned}
&\sum_{l=1}^K\mathbb E\sup_{0\leq s\leq t}|X_l^{**}|^2 \leq C\mathbb E\int_0^t|\delta X_i|^2ds.
\end{aligned}\end{equation*}
Then noticing \eqref{estimation-6},
\begin{equation*}\begin{aligned}
\mathbb E\sup_{0\leq s\leq t}|X_j^*-&N_k\delta X_j|^2\leq C\mathbb E\int_0^t|X_j^*-N_k\delta X_j|^2ds+C\sum_{l=1}^K\mathbb E\int_0^t|X_l^{**}-\delta X_{(l)}|^2ds\\
&\qquad\qquad+C\epsilon_N^2\mathbb E\int_0^t|\delta X_i|^2ds\\
&\leq C\mathbb E\int_0^t|X_j^*-N_k\delta X_j|^2ds+\Big(\frac{C}{N^2}+C\epsilon_N^2\Big)  \Bigg(1+\mathbb E\int_0^T|\delta u_i|^2ds\Bigg),
\end{aligned}\end{equation*}
which implies \eqref{estimation-7}. With the help of the estimations of BSDE, \eqref{estimation-77} is derived.
\end{proof}

Applying the above estimations, by the standard estimations of FBSDE, the $L^2$ boundness of $\xi_i,\eta_i,\sigma_i,p_i,\overline{p}_i,q_i$, \eqref{eq17} and \eqref{eq36}, we get the following result.
\begin{lemma}\label{Lemma7}
Under \emph{(}A1\emph{)}-\emph{(}A4\emph{)}, there exists a constant $C\geq0$ independent of $N$ such that
\begin{equation}\label{estimation-8}
\sum_{k=1}^K\mathbb E\sup_{0\leq t\leq T}\left|\mathbf{X}_k(t)-\frac{1}{N_k}\sum_{j\in\mathcal I_k,j\neq i}X_1^j(t)\right|^2\leq \frac{C}{N}+C\epsilon_N^2,
\end{equation}
\begin{equation}\label{estimation-9}
\sum_{k=1}^K\mathbb E\sup_{0\leq t\leq T}\left|\mathbb E{\mathbf{Y}}_k(t)-\frac{1}{N_k}\sum_{j\in\mathcal I_k,j\neq i}Y_1^j(t)\right|^2\leq \frac{C}{N}+C\epsilon_N^2.
\end{equation}
\end{lemma}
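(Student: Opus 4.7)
The plan is to identify $(\mathbf X_k,\mathbf Y_k)$ with the type-$k$ representative pair $(\check X_k,\check Y_k)$ from the consistency condition system \eqref{CC}, and then to carry out an exchangeable-averaging argument parallel to the proofs of Lemmas \ref{lemma5.2}--\ref{lemma5.4} for the adjoint processes $X_1^j,Y_1^j$ of \eqref{eq17} (read in the present perturbation context with $\widetilde X_j,\widetilde Y_j$ in place of $\bar X_j,\bar Y_j$).

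For \eqref{estimation-8}, for $j\in\mathcal I_k$ set $\Delta X^j:=X_1^j-\check X_k$. Both processes satisfy the same linear ODE $d\xi=H_k^\top\xi\,dt$, so $\Delta X^j(t)=e^{\int_0^t H_k^\top ds}(-\Gamma)(\widetilde Y_j(0)-\beta_k(0))$ and, after summing,
\[
\frac{1}{N_k}\sum_{j\in\mathcal I_k,j\neq i}X_1^j(t)-\check X_k(t)=e^{\int_0^t H_k^\top ds}(-\Gamma)\bigl(\widetilde Y^{(k)}(0)-\beta_k(0)\bigr)+O(1/N_k),
\]
with $\widetilde Y^{(k)}:=N_k^{-1}\sum_{j\in\mathcal I_k}\widetilde Y_j$ and the boundary correction controlled by Lemma \ref{5.1}. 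This reduces \eqref{estimation-8} to the bound $\mathbb E|\widetilde Y^{(k)}(0)-\beta_k(0)|^2\le C/N+C\epsilon_N^2$, i.e., the backward analog of Lemma \ref{lemma5.2}: average the $\widetilde Y_j$ BSDEs of \eqref{eq36} over $j\in\mathcal I_k$, subtract the $\beta_k$ BSDE in \eqref{CC}, apply the standard BSDE $L^2$ estimate, and close a coupled Gronwall loop against Lemma \ref{lemma5.2} to handle the forward-average term $\widetilde X^{(N)}-\sum_l\pi_l\mathbb E\alpha_l$ that enters the driver. Summing over $k$ yields \eqref{estimation-8}.

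For \eqref{estimation-9}, set $\Delta Y^j:=Y_1^j-\check Y_k$. Subtracting the BSDEs for $Y_1^j$ in \eqref{eq17} and for $\check Y_k$ in \eqref{CC} gives
\[
-d\Delta Y^j=\bigl[A_k^\top\Delta Y^j-L^\top\Delta X^j+Q(\widetilde X_j-\alpha_k)\bigr]dt-dM^j,\qquad \Delta Y^j(T)=-\Phi^\top\Delta X^j(T),
\]
with $M^j$ absorbing the two martingale terms. The standard BSDE estimate controls $\mathbb E\sup_{[0,T]}|\Delta Y^j|^2$ by $C\mathbb E|\Delta X^j(T)|^2+C\mathbb E\int_0^T(|\Delta X^j|^2+|\widetilde X_j-\alpha_k|^2)ds$. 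Averaging over $j\in\mathcal I_k$, the first two summands are absorbed by the previous step, while the third is the type-$k$ version of the estimate already available through Lemma \ref{lemma5.2}. Taking expectation annihilates the martingale part of $N_k^{-1}\sum_{j\neq i}\Delta Y^j$, which is precisely why \eqref{estimation-9} features $\mathbb E\mathbf Y_k$ rather than $\mathbf Y_k$; summing over $k$ completes the argument.

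The principal obstacle is the LLN-type estimate $\mathbb E|\widetilde Y^{(k)}(0)-\beta_k(0)|^2\le C/N+C\epsilon_N^2$ flagged above. Because the backward dynamics of $\widetilde Y_j$ are coupled to the entire population through $\widetilde X^{(N)}$ and through the decentralized controls $\widetilde u_j$ built from $(p_j,\overline p_j,q_j)$ in \eqref{eq39}--\eqref{eq36}, the forward and backward averages cannot be estimated separately: one must simultaneously close Gronwall inequalities on $\mathbb E\sup|\widetilde X^{(k)}-\mathbb E\alpha_k|^2$ and $\mathbb E\sup|\widetilde Y^{(k)}-\mathbb E\beta_k|^2$, using the i.i.d.\ structure of $\{(p_j,\overline p_j,q_j):j\in\mathcal I_k\}$ inherited from (A2) together with the wellposedness of \eqref{CC} to guarantee that the LLN averaging error is indeed $O(N^{-1/2})$ in $L^2$.
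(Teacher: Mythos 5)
Your treatment of \eqref{estimation-8} is essentially the paper's: both $X_1^j$ and the representative satisfy the same homogeneous linear ODE, so the difference is the propagated gap in the initial data, and everything reduces to an LLN-type bound on $\widetilde Y^{(k)}(0)$ at time zero, which is exactly the backward companion of Lemma \ref{lemma5.2} that the paper derives. One caveat even here: the comparison must be made against the \emph{deterministic} quantity $-\Gamma\mathbb E\beta_k(0)$ (the paper's $\mathbf X_k(0)=-\Gamma\mathbb E\widetilde Y_k(0)$), not against $-\Gamma\beta_k(0)$; the reduction you state, $\mathbb E|\widetilde Y^{(k)}(0)-\beta_k(0)|^2\le C/N+C\epsilon_N^2$, is false whenever $\beta_k(0)$ has nondegenerate variance, since $\widetilde Y^{(k)}(0)$ concentrates at $\mathbb E\beta_k(0)$ and not at the random $\beta_k(0)$.

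The genuine gap is in \eqref{estimation-9}. You bound each $\mathbb E\sup_{[0,T]}|\Delta Y^j|^2$ by a quantity containing $C\,\mathbb E\int_0^T|\widetilde X_j-\alpha_k|^2\,ds$ and then ``average over $j\in\mathcal I_k$.'' But $\mathbb E\int_0^T|\widetilde X_j-\alpha_k|^2\,ds$ is $O(1)$ for every $j$: $\widetilde X_j$ and $\alpha_k$ are driven by different Brownian motions and initial data, and the only smallness available is the cancellation in the \emph{average} $\frac{1}{N_k}\sum_j(\widetilde X_j-\mathbb E\alpha_k)$, which is destroyed once you pass to averages of squared individual deviations (Jensen goes the wrong way: $|\frac{1}{N_k}\sum_j\Delta Y^j|^2\le\frac{1}{N_k}\sum_j|\Delta Y^j|^2$, so small individual bounds would be needed, and they are not available). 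Moreover, even if you first form the averaged BSDE, comparing it against $\check Y_k$ — whose driver contains $Q\alpha_k$ and which carries the martingale part $\check Z_k\,dW^{(k)}$ — leaves an $O(1)$ discrepancy, because $\frac{1}{N_k}\sum_j\widetilde X_j$ concentrates at $\mathbb E\alpha_k$, not at the random realization $\alpha_k$; taking an outer expectation at the end does not help, since the quantity to be bounded, $\mathbb E\sup_t|\mathbb E\mathbf Y_k(t)-\frac{1}{N_k}\sum_{j\neq i}Y_1^j(t)|^2$, already requires the \emph{random} average to track the \emph{deterministic} function pathwise in $L^2$. The paper's proof avoids both problems by writing the BSDE satisfied by the average $\frac{1}{N_k}\sum_{j\neq i}Y_1^j$ and comparing it with the deterministic ODE for $\mathbb E\mathbf Y_k$ (driver $Q\mathbb E\widetilde X_k$); the driver difference is then $Q\big(\frac{1}{N_k}\sum_{j\neq i}\widetilde X_j-\mathbb E\widetilde X_k\big)$, which is controlled by the estimates $\mathbb E\sup_t|\widetilde X^{(k)}-\mathbb E\alpha_k|^2\le C/N+C\epsilon_N^2$ and $\mathbb E\sup_t|\widetilde Y^{(k)}-\mathbb E\beta_k|^2\le C/N+C\epsilon_N^2$ obtained from the proof of Lemma \ref{lemma5.2}, followed by the standard BSDE stability estimate and Gronwall. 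You need to restructure your backward step along these lines: average first, compare with the expectation dynamics, and only then estimate.
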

\begin{proof}
It follows from \eqref{eq17} that
\begin{equation}\nonumber\left\{\begin{aligned}
&d\Bigg(\frac{1}{N_k}\sum_{j\in\mathcal I_k,j\neq i}X_1^j\Bigg)=\frac{H_k^\top }{N_k}\sum_{j\in\mathcal I_k,j\neq i}X_1^j dt,\\
&d\Bigg(\frac{1}{N_k}\sum_{j\in\mathcal I_k,j\neq i}Y_1^j\Bigg)=\Bigg[- \frac{A_k^\top}{N_k}\sum_{j\in\mathcal I_k,j\neq i}Y_1^j+ \frac{L^\top}{N_k}\sum_{j\in\mathcal I_k,j\neq i}X_1^j-\frac{Q}{N_k}\sum_{j\in\mathcal I_k,j\neq i}\widetilde X_j\Bigg] dt\\
&\qquad\qquad\qquad\qquad\qquad+\frac{1}{N_k}\sum_{j\in\mathcal I_k,j\neq i}Z_1^{j\cdot}dW(t),\\
&\frac{1}{N_k}\sum_{j\in\mathcal I_k,j\neq i}X_1^j(0)=-\frac{\Gamma}{N_k}\sum_{j\in\mathcal I_k,j\neq i}\widetilde Y_j(0),\  \frac{1}{N_k}\sum_{j\in\mathcal I_k,j\neq i}Y_1^j(T)=- \frac{\Phi^\top}{N_k}\sum_{j\in\mathcal I_k,j\neq i}X_1^j(T).
\end{aligned}\right.\end{equation}
By the definition of $\mathbf{X}_k,\ \mathbf{Y}_k$, we have
\begin{equation}\nonumber\left\{\begin{aligned}
&d\mathbf{X}_k=H_k^\top \mathbf{X}_k dt,\\
&d\mathbb E\mathbf{Y}_k=\Big[-A_k^\top \mathbb E\mathbf{Y}_k+L^\top \mathbf{X}_k-Q\mathbb E\widetilde X_k\Big] dt,\\
&\mathbf{X}_k (0)=-\Gamma\mathbb E\widetilde Y_k(0),\quad \mathbb E\mathbf{Y}_k(T)=-\Phi^\top \mathbf{X}_k(T),\quad k=1,\cdots,K,
\end{aligned}\right.\end{equation}
where $\widetilde X_k,\ \widetilde Y_k$ denote the optimal states of $k-$type corresponding to \eqref{eq36} and $\mathbb E\widetilde{X}_k,$\\ $\mathbb E\widetilde{Y}_k$ satisfy
\begin{equation}\nonumber\left\{\begin{aligned}
&d\mathbb E\widetilde{X}_k=\Big[A_k\mathbb E\widetilde{X}_k-BR_k^{-1}\mathbb E\big(B^\top \mathbf{p}_k+D^\top \overline{\mathbf{p}}_k+K^\top \mathbf{q}_k\big)+F\mathbb E\widetilde{X}^{(N)}\Big]dt,\\
&d\mathbb E\widetilde{Y}_k=-\Big[H_k\mathbb E\widetilde{Y}_k-KR_k^{-1}\mathbb E\big(B^\top \mathbf{p}_k+D^\top \overline{\mathbf{p}}_k+K^\top \mathbf{q}_k\big)+L\mathbb E\widetilde{X}_k+M\mathbb E\widetilde{X}^{(N)}\Big]dt,\\
&\mathbb E\widetilde{X}_k(0)=\mathbb E\xi^{(k)},\quad \mathbb E\widetilde{Y}_k(T)=\Phi \mathbb E\widetilde{X}_k(T)+\mathbb E\eta_k.
\end{aligned}\right.\end{equation}
Recall the notations $\widetilde X^{(k)}$ and $\widetilde Y^{(k)}$ defined in the proof of Lemma \ref{lemma5.2}. Noticing $$\frac{1}{N_k}\sum_{j\in\mathcal I_k,j\neq i}\widetilde X_j=\widetilde X^{(k)}-\frac{I_{\mathcal I_k}(i)}{N_k}\widetilde X_i,\quad \frac{1}{N_k}\sum_{j\in\mathcal I_k,j\neq i}\widetilde Y_j=\widetilde Y^{(k)}-\frac{I_{\mathcal I_k}(i)}{N_k}\widetilde Y_i,$$
we have
\begin{equation*}\begin{aligned}
&\mathbb E\sup_{0\leq s\leq t}\left|\frac{1}{N_k}\sum_{j\in\mathcal I_k,j\neq i}X_1^j-\mathbf{X}_k\right|^2 \leq C\mathbb E\int_0^t\left|\frac{1}{N_k}\sum_{j\in\mathcal I_k,j\neq i}X_1^j-\mathbf{X}_k\right|^2ds\\
&\qquad\qquad\qquad+C\Bigg(\left|\widetilde Y^{(k)}(0)-\mathbb E\beta_k(0)\right|^2+|\mathbb E\widetilde Y_k(0)-\mathbb E\beta_k(0)|^2+\frac{1}{N_k^2}|\widetilde Y_i(0)|^2\Bigg),\\
&\mathbb E\sup_{t\leq s\leq T}\left|\frac{1}{N_k}\sum_{j\in\mathcal I_k,j\neq i}Y_1^j-\mathbb E\mathbf{Y}_k\right|^2\leq C\mathbb E\left|\frac{1}{N_k}\sum_{j\in\mathcal I_k,j\neq i}X_1^j(T)-\mathbf{X}_k(T)\right|^2\\
&\qquad\qquad+C\mathbb E\int_t^T\left|\frac{1}{N_k}\sum_{j\in\mathcal I_k,j\neq i}Y_1^j-\mathbb E\mathbf{Y}_k\right|^2ds+C\mathbb E\int_t^T\left|\frac{1}{N_k}\sum_{j\in\mathcal I_k,j\neq i}X_1^j-\mathbf{X}_k\right|^2ds\\
&\qquad\qquad+C\mathbb E\int_t^T\left(|\widetilde X^{(k)}-\mathbb E\alpha_k|^2+|\mathbb E\widetilde X_k-\mathbb E\alpha_k|^2+\frac{1}{N_k^2}|\widetilde X_i|^2\right)ds.
\end{aligned}\end{equation*}
By the $L^2$ boundness of $\xi_i,\eta_i,\sigma_i,p_i,\overline{p}_i,q_i$, it is easy to get $\mathbb E\sup_{0\leq s\leq t}\big(|\widetilde X_i|^2+|\widetilde Y_i|^2\big)\leq C$, $1\leq i\leq N$. In addition,
\begin{equation*}\begin{aligned}
&\sup_{0\leq s\leq t}|\mathbb E\widetilde X_k-\mathbb E\alpha_k|^2 \leq C\mathbb E\int_0^t\left|\widetilde X^{(N)}-\sum_{l=1}^K\pi_l\mathbb E\alpha_l\right|^2ds,\\
&\sup_{t\leq s\leq T}|\mathbb E\widetilde Y_k-\mathbb E\beta_k|^2 \leq C|\mathbb E\widetilde X_k(T)-\mathbb E\alpha_k(T)|^2+C\mathbb E\int_t^T|\mathbb E\widetilde X_k-\mathbb E\alpha_k|^2ds\\
&\qquad\qquad\qquad\qquad\qquad+C\mathbb E\int_t^T\left|\widetilde X^{(N)}-\sum_{l=1}^K\pi_l\mathbb E\alpha_l\right|^2ds.
\end{aligned}\end{equation*}
With the help of the proof of Lemma \ref{lemma5.2}, one gets
\begin{equation*}\begin{aligned}
&\mathbb E\sup_{0\leq t\leq T}\Big|\widetilde X^{(k)}-\mathbb E\alpha_k\Big|^2\leq \frac{C}{N}+C\epsilon_N^2,\quad \mathbb E\sup_{0\leq t\leq T}\Big|\widetilde Y^{(k)}-\mathbb E\beta_k\Big|^2\leq \frac{C}{N}+C\epsilon_N^2.
\end{aligned}\end{equation*}
Then \eqref{estimation-8} and \eqref{estimation-9} are obtained based on above inequalities, Lemma \ref{lemma5.2} and Gronwall inequality.
\end{proof}
\subsection{Asymptotic optimality}

To verify the asymptotic optimality, we just need to investigate the perturbation $u_{i}\in\mathcal U_i^c,\ 1\leq i\leq N$ satisfying $\mathcal J_{soc}^{(N)}(u_1,\cdots,u_N)\leq\mathcal J_{soc}^{(N)}(\widetilde u_1,$ $\cdots,\widetilde u_N).$ Obviously, $$\mathcal J_{soc}^{(N)}(\widetilde u_1,\cdots,\widetilde u_N)\leq CN,$$ where $C$ is a nonnegative constant independent of $N$. Therefore we need only to investigate the perturbation $u_i\in\mathcal U_i^{c}$ satisfying
\begin{equation}\label{condition for perturbation}
\sum_{i=1}^N\mathbb E\int_0^T|u_i|^2dt\leq CN.
\end{equation}
Let $\delta u_i=u_i-\widetilde u_i,\ 1\leq i\leq N$. Now we consider a perturbation  ${u} = \widetilde{u} + (\delta u_1,\cdots,\delta u_N):=\widetilde{ u}+ \delta u$. Recalling Lemma \ref{Lemma5} and Lemma \ref{lemma5.4}, there exists a constant $C\geq0$ independent of $N$ such that
\begin{equation*}\begin{aligned}
&\sup_{1\leq j\leq N,j\neq i}\Bigg[\mathbb E\sup_{0\leq t\leq T}|\delta X_j(t)|^2+\mathbb E\sup_{0\leq t\leq T}|\delta Y_j(t)|^2+\mathbb E\int_0^T\sum_{l=1}^N\big|\delta Z_{jl}(t)\big|^2dt\Bigg]
\leq\frac{C}{N^2},\\
&\sum_{l=1}^K\mathbb E\sup_{0\leq t\leq T}|X_l^{**}(t)-\delta X_{(l)}(t)|^2\leq \frac{C}{N^2}+C\epsilon_N^2,\\
&\mathbb E\sup_{0\leq t\leq T}|N_k\delta X_j(t)-X^*_j(t)|^2\leq \frac{C}{N^2}+C\epsilon_N^2  ,\\
&\mathbb E\sup_{0\leq t\leq T}|N_k\delta Y_j(t)-Y^*_j(t)|^2\leq \frac{C}{N^2}+C\epsilon_N^2  .
\end{aligned}\end{equation*}
Further, by Section \ref{Resc}, we have
\begin{equation}\nonumber
\begin{aligned}
2\mathcal{J}^{(N)}_{soc}(\widetilde{ u}+ \delta u)=\ &\langle \mathcal M_2(\widetilde{ u}+ \delta u),\widetilde{ u}+ \delta u\rangle+2\langle \mathcal M_1,\widetilde{ u}+ \delta u\rangle+\mathcal M_0\\
=\ &\langle \mathcal M_2\widetilde{ u},\widetilde{ u}\rangle+\langle \mathcal M_2\delta u,\delta u\rangle+2\langle \mathcal M_2\widetilde{ u},\delta u\rangle+2\langle \mathcal M_1,\widetilde{ u}\rangle+2\langle \mathcal M_1,\delta u\rangle+\mathcal M_0\\
=\ &2\mathcal{J}^{(N)}_{soc}(\widetilde{ u})+\langle \mathcal M_2\delta u,\delta u\rangle+2\langle \mathcal M_2\widetilde{ u},\delta u\rangle+2\langle \mathcal M_1,\delta u\rangle\\
=\ &2\mathcal{J}^{(N)}_{soc}(\widetilde{ u})+2\langle \mathcal M_2\widetilde{ u}+\mathcal M_1,\delta u\rangle+\langle \mathcal M_2\delta u,\delta u\rangle,
  \end{aligned}
\end{equation}
where $\mathcal M_2\widetilde{u} +\mathcal M_1 $ denotes the Fr\'{e}chet differential of ${\mathcal J}^{(N)}_{soc}$ corresponding to $\widetilde{u}$.

\begin{theorem}\label{asymptotic optimal}
Under \emph{(}A1\emph{)}-\emph{(}A4\emph{)}, $\widetilde u=(\widetilde u_1,\cdots,\widetilde u_N)$ is a $\Big(\frac{1}{\sqrt{N}}+\epsilon_N\Big)$-social decentralized optimal strategy, where $\epsilon_N=\sup_{1\leq l\leq K}\left|\pi_l^{(N)}-\pi_l\right|$.
\end{theorem}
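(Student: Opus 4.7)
The plan is to exploit the quadratic structure of $\mathcal{J}^{(N)}_{soc}$ together with the positivity of the operator $\mathcal{M}_2$ constructed above. For any admissible $u = \widetilde u + \delta u$ satisfying \eqref{condition for perturbation}, the identity displayed just before the theorem gives
$$2\mathcal{J}^{(N)}_{soc}(\widetilde u + \delta u) - 2\mathcal{J}^{(N)}_{soc}(\widetilde u) = 2\langle \mathcal{M}_2 \widetilde u + \mathcal{M}_1, \delta u\rangle + \langle \mathcal{M}_2 \delta u, \delta u\rangle \ge 2\langle \mathcal{M}_2 \widetilde u + \mathcal{M}_1, \delta u\rangle,$$
so it suffices to establish $|\langle \mathcal{M}_2 \widetilde u + \mathcal{M}_1, \delta u\rangle| \le CN(\tfrac{1}{\sqrt{N}} + \epsilon_N)$; dividing by $N$ and taking the infimum over admissible $u$ then yields the claimed $(\tfrac{1}{\sqrt{N}}+\epsilon_N)$-optimality.

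By linearity of \eqref{eq1}--\eqref{eq2} in $u$, the Fr\'{e}chet gradient $\mathcal{M}_2 \widetilde u + \mathcal{M}_1$ decomposes along the product structure of the admissible set: its pairing with $\delta u$ equals the sum $\sum_{i=1}^N D_i \mathcal{J}^{(N)}_{soc}(\widetilde u)(\delta u_i)$ of per-agent first-order variations, where $D_i$ denotes the partial derivative with respect to $u_i$. Applying Proposition \ref{prop0914} with $\bar u$ replaced by $\widetilde u$ expresses each $D_i \mathcal{J}^{(N)}_{soc}(\widetilde u)(\delta u_i)$ as a ``main term'' plus the nine errors $\varepsilon_l^i$ listed in \eqref{eq42}. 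The main term is precisely the first-order variation of the auxiliary cost $J_i$ at $\widetilde u_i$; integrating by parts against the adjoint triple $(p_i,\overline p_i,q_i)$ from the Hamiltonian system \eqref{Hamil} collapses it to
$$\mathbb{E} \int_0^T \langle R_{\theta_i}\widetilde u_i + B^\top p_i + D^\top \overline p_i + K^\top q_i,\ \delta u_i\rangle dt,$$
which vanishes by the SMP-based formula $\widetilde u_i = -R_{\theta_i}^{-1}(B^\top p_i + D^\top \overline p_i + K^\top q_i)$. Consequently $\langle \mathcal{M}_2 \widetilde u + \mathcal{M}_1, \delta u\rangle = \sum_{i=1}^N \sum_{l=1}^9 \varepsilon_l^i$.

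The final task is to estimate each $\varepsilon_l^i$. Since every error is linear in perturbed quantities, Cauchy--Schwarz factors it as (mean-field approximation error) $\times$ ($L^2$-norm involving $\delta u_i$). Lemma \ref{5.1} provides the uniform-in-$N$ $L^2$ bounds on the limiting processes; Lemma \ref{lemma5.2} delivers the $O(\tfrac{1}{\sqrt N}+\epsilon_N)$ rate for $\widetilde X^{(N)}-\sum_l\pi_l\mathbb{E}\alpha_l$ used in $\varepsilon_1^i$ and $\varepsilon_2^i$; Lemma \ref{Lemma5} bounds $\delta X_j,\delta Y_j$ for $j\neq i$ by $\tfrac{C}{N^2}(1+\|\delta u_i\|_{L^2}^2)$; Lemma \ref{lemma5.4} controls the dual-variable approximations $X_l^{**}-\delta X_{(l)}$, $N_k\delta X_j-X_j^*$, $N_k\delta Y_j-Y_j^*$ that appear in $\varepsilon_2^i$--$\varepsilon_4^i$; Lemma \ref{Lemma7} treats the empirical type-$k$ averages of $X_1^j,Y_1^j$ against $\mathbf X_k,\mathbb{E}\mathbf Y_k$ appearing in $\varepsilon_5^i$--$\varepsilon_8^i$; and $\varepsilon_9^i$ is immediate from its $1/N_k$ prefactor. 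Each yields
$$|\varepsilon_l^i|\leq C\left(\tfrac{1}{\sqrt N}+\epsilon_N\right)\left(1+\mathbb{E}\int_0^T|\delta u_i|^2 dt\right)^{1/2},$$
and summing over $i$ via one more Cauchy--Schwarz with \eqref{condition for perturbation} produces the desired $CN(\tfrac{1}{\sqrt N}+\epsilon_N)$ bound. The main difficulty is the careful bookkeeping across the nine heterogeneous error terms and the verification that the single-agent adjoint cancellation lifts to the multi-agent perturbation setting, which relies on the superposition $\delta X_j = \sum_i \delta X_j^{(u_i)}$ inherited from linearity of \eqref{eq1}--\eqref{eq2}.
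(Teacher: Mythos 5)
Your proposal follows essentially the same route as the paper: both start from the quadratic representation $2\mathcal{J}^{(N)}_{soc}(\widetilde u+\delta u)=2\mathcal{J}^{(N)}_{soc}(\widetilde u)+2\langle \mathcal M_2\widetilde u+\mathcal M_1,\delta u\rangle+\langle \mathcal M_2\delta u,\delta u\rangle$, reduce the claim to estimating the Fr\'{e}chet differential $\mathcal M_2\widetilde u+\mathcal M_1$, kill the main term via the stationarity of $\widetilde u_i$ for the auxiliary problem (the SMP formula), and bound the nine error terms $\varepsilon_1,\dots,\varepsilon_9$ by exactly the same Lemmas \ref{5.1}--\ref{Lemma7} to obtain the $O\big(\tfrac{1}{\sqrt N}+\epsilon_N\big)$ rate. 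Your write-up is, if anything, slightly more explicit than the paper's about the per-agent superposition of the first-order variation and the final Cauchy--Schwarz summation over $i$ under \eqref{condition for perturbation}, but the decomposition and key estimates are identical.
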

\begin{proof}
By virtue of Cauchy-Schwarz inequality, we get
\begin{equation*}
  \begin{aligned}
&\mathcal{J}^{(N)}_{soc}(\widetilde{u} )-\mathcal{J}^{(N)}_{soc}(\widetilde{u} + \delta u)\\
\leq & \sqrt{\sum_{i=1}^N|\mathcal M_2\widetilde u+\mathcal M_1|^2\sum_{i=1}^N|\delta u_i|^2}-\frac{1}{2}\langle \mathcal M_2\delta u,\delta u\rangle
\leq|\mathcal M_2\widetilde u+\mathcal M_1|O(N).
\end{aligned}
\end{equation*}
Thus $$|\mathcal M_2\widetilde u+\mathcal M_1|=o(1)$$
ensures the asymptotic optimality.
According to Section \ref{Agent's perturbation}, we derive
\begin{equation}\begin{aligned}
&|\mathcal M_2\widetilde u+\mathcal M_1|\\
  =\ &\mathbb E\Bigg\{\int_0^T\Bigg[\langle Q\widetilde X_i,\delta X_i\rangle\!-\!\left\langle (QS+ S^\top Q -S^\top QS)\sum_{l=1}^K\pi_l\mathbb E\alpha_l,\delta X_i\right\rangle\!+\!\sum_{l=1}^K\langle \pi_lF^\top Y_2^l,\delta X_i\rangle
\\
&+\sum_{l=1}^K\langle \pi_l(F^\top \mathbb E\mathbf{Y}_l-M^\top \mathbb E\mathbf{X}_l),\delta X_i\rangle+\langle R_{\theta_i}\widetilde u_i,\delta u_i\rangle\Bigg]dt +\langle \Gamma\widetilde Y_i(0),\delta Y_i(0)\rangle\Bigg\}+\sum_{l=1}^9\varepsilon_l.
\end{aligned}\end{equation}
According to the optimality of $\widetilde u$,
\begin{equation*}\begin{aligned}
&\mathbb E\Bigg\{\int_0^T\Bigg[\langle Q\widetilde X_i,\delta X_i\rangle-\left\langle (QS+ S^\top Q -S^\top QS)\sum_{l=1}^K\pi_l\mathbb E\alpha_l,\delta X_i\right\rangle+\sum_{l=1}^K\langle \pi_lF^\top Y_2^l,\delta X_i\rangle
\\
&+\sum_{l=1}^K\langle \pi_l(F^\top \mathbb E\mathbf{Y}_l-M^\top \mathbb E\mathbf{X}_l),\delta X_i\rangle+\langle R_{\theta_i}\widetilde u_i,\delta u_i\rangle\Bigg]dt +\langle \Gamma\widetilde Y_i(0),\delta Y_i(0)\rangle\Bigg\}=0.
\end{aligned}\end{equation*}
Moreover, by Lemmas \ref{lemma5.2}-\ref{Lemma7}, we have $$\sum_{l=1}^{9}\varepsilon_l=
O\Big(\frac{1}{\sqrt{N}}+\epsilon_N\Big).$$
Therefore, $$|\mathcal M_2\widetilde u+\mathcal M_1|=O\Big(\frac{1}{\sqrt{N}}+\epsilon_N\Big).$$
\end{proof}


\section{Conclusion}
\setcounter{equation}{0}
\renewcommand{\theequation}{\thesection.\arabic{equation}}

This paper focuses on solving an LQ stochastic optimization problem in MF social optima scheme while the dynamic is driven by FBSDE. An auxiliary LQ control problem is formulated and a decentralized strategy is obtained with the help of consistency condition system. We also develop a Riccati equation and a BSDE to decouple the MF-type FBSDE. At last, we verify the asymptotic optimality. In the future, enlightened by the first motivation in Section \ref{sec1.2} one possible research direction is to study the case that the dynamic satisfies a nonlinear system, which may be more valuable but more complicated than the LQ structure shown in this work. Another research problem is LQ MF social optima with partial observation, which may involve more applications in practice and bring more challenges in theory.

\end{document}